\titlespacing{\section}{0cm}{3.5pc}{1.5pc}
\def\@citex[#1]#2{\if@filesw\immediate\write\@auxout{\string\citation{#2}}\fi
  \def\@citea{}\@cite{\@for\@citeb:=#2\do
    {\@citea\def\@citea{\@citesep}\@ifundefined
       {b@\@citeb}{{\bf ?}\@warning
       {Citation `\@citeb' on page \thepage \space undefined}}%
{\csname b@\@citeb\endcsname}}}{#1}}
\def\@citesep{; }
\newtheoremstyle{Kang}{}{}{\itshape}{}{\bf}{}{.5em}{}
\theoremstyle{Kang}
\newtheorem{theorem}{Theorem}[section]
\newtheorem{lemma}[theorem]{Lemma}
\newtheorem{prop}[theorem]{Proposition}
\newtheoremstyle{Kremark}{}{}{}{}{\bf}{}{.5em}{}
\theoremstyle{Kremark}
\newtheorem*{remark}{Remark.}
\newtheorem{defn}[theorem]{Definition}
\newtheorem{other}{}
\newenvironment{Case}[1]{\medskip {\it Case #1.}}{}
\def\fn#1{\operatorname{#1}} % function work like \sin
\def\bm#1{\mathbbm{#1}}
\def\c#1{\mathcal{#1}}
\title{Function Fields of Algebraic Tori Revisited}
\author{\begin{minipage}{0.4\textwidth}
Shizuo Endo \\[2mm] \normalsize
Department of Mathematics \\
Tokyo Metropolitan University \\
Tokyo, Japan \\
E-mail: wktxj285@yahoo.co.jp
\end{minipage}
and \
\begin{minipage}{0.4\textwidth}
Ming-chang Kang \\[2mm] \normalsize
Department of Mathematics \\
National Taiwan University \\
Taipei, Taiwan \\
E-mail: kang@math.ntu.edu.tw
\end{minipage} }
\date{}
\begin{document}

\maketitle

\footnote{\textit{\!\!\! $2010$ Mathematics Subject
Classification}. 14E08, 11R33, 20C10, 11R29.}
\footnote{\textit{\!\!\! Keywords and phrases}. Algebraic torus,
rationality problem, locally free class groups, class numbers,
maximal orders, twisted group rings.}

\begin{abstract}
{\noindent\bf Abstract.} Let $K/k$ be a finite Galois extension
and $\pi = \fn{Gal}(K/k)$. An algebraic torus $T$ defined over $k$
is called a $\pi$-torus if $T\times_{\fn{Spec}(k)}
\fn{Spec}(K)\simeq \bm{G}_{m,K}^n$ for some integer $n$. The set
of all algebraic $\pi$-tori defined over $k$ under the stably birational
equivalence forms a semigroup, denoted by $T(\pi)$. We will give a
complete proof of the following theorem due to Endo and Miyata
\cite{EM4}. Theorem. Let $\pi$ be a finite group. Then
$T(\pi)\simeq C(\Omega_{\bm{Z}\pi})$ where $\Omega_{\bm{Z}\pi}$ is
a maximal $\bm{Z}$-order in $\bm{Q}\pi$ containing $\bm{Z}\pi$ and
$C(\Omega_{\bm{Z}\pi})$ is the locally free class group of
$\Omega_{\bm{Z}\pi}$, provided that $\pi$ is isomorphic to one of the
following four types of groups : $C_n$ ($n$ is any positive
integer), $D_m$ ($m$ is any odd integer $\ge 3$), $C_{q^f}\times
D_m$ ($m$ is any odd integer $\ge 3$, $q$ is an odd prime number
not dividing $m$, $f\ge 1$, and
$(\bm{Z}/q^f\bm{Z})^{\times}=\langle \bar{p}\rangle$ for any prime
divisor $p$ of $m$), $Q_{4m}$ ($m$ is any odd integer $\ge 3$,
$p\equiv 3 \pmod{4}$ for any prime divisor $p$ of $m$).
\end{abstract}

\newpage
%------------------------------------S1
\section{Introduction}

In \cite{EM4}, Endo and Miyata investigated the classification of
the function fields of algebraic tori. An additional paper was
planned, which would contain a complete proof of $(1')\Rightarrow
(2)$ of Theorem 3.3 (see \cite[page 187]{EM4}). This article was
announced in \cite[p.189, line $-14$]{EM4}. Unfortunately the plan
didn't materialize. The present article may be regarded as a
supplement to the papers \cite{EM4} and the paper \cite{Sw6}. We
thank Prof. Richard G. Swan who detected a mistake in
\cite[p.96]{EM4} and in the first version of this paper (see the
remark at the end of Section 4). He also showed us how we could
simplify the proof of Theorem \ref{t4.3} in a revised version of
the first version.

To begin with, we recall some definitions and terminology.
Let $k$ be a field, $k\subset L$ be a field extension.
The field $L$ is said to be rational over $k$ (in short, $k$-rational) if, for some $n$,
$L\simeq k(X_1,\ldots,X_n)$ over $k$ where $k(X_1,\ldots,X_n)$ is the rational function field of $n$ variables over $k$.
Two field extensions $k\subset L_1, L_2$ are stably isomorphic over $k$ if,
$L_1(X_1,\ldots,X_m)\simeq L_2(Y_1,\ldots,Y_n)$ over $k$ where $X_1,\ldots,X_m$ are algebraically independent over $L_1$ and $Y_1,\ldots,Y_n$ are algebraically independent over $L_2$.
In particular,
a field extension $k\subset L$ is stably $k$-rational if $L(X_1,\ldots,X_m)$ is $k$-rational where $X_1,\ldots,X_m$ are some elements algebraically independent over $L$. When $k$ is an infinite field, a field extension $L$ over $k$ is said to be retract
$k$-rational if there is a $k$-algebra $A$ contained in $L$ such
that (i) $L$ is the quotient field of $A$, (ii) there exist a
non-zero polynomial $f\in k[X_1,\ldots,X_n]$ (where
$k[X_1,\ldots,X_n]$ is the polynomial ring) and $k$-algebra
morphisms $\varphi\colon A\to k[X_1,\ldots,X_n][1/f]$ and
$\psi\colon k[X_1,\ldots,X_n][1/f]\to A$ satisfying
$\psi\circ\varphi =1_A$ (see \cite{Sa}).
When $V$ is an irreducible algebraic variety defined over $k$,
$V$ is $k$-rational (resp.\ stably $k$-rational, retract $k$-rational) if so is the function field $k(V)$ over $k$.
If $V_1$ and $V_2$ are irreducible varieties over $k$,
$V_1$ and $V_2$ are stably birational equivalent over $k$ if so are their function fields over $k$.

An algebraic torus $T$ defined over a field $k$ is an affine
algebraic group defined over $k$ such that $T\times_{\fn{Spec}(k)}
\fn{Spec}(\bar{k})\simeq \bm{G}_{m,\bar{k}}^n$ for some integer
$n$ where $\bar{k}$ is the algebraic closure of $k$ and
$\bm{G}_{m,K}$ is the 1-dimensional multiplicative group defined
over a field $K$ (containing the base field $k$) \cite[page 36;
Vo]{On,Sw5}. By \cite[Proposition 1.2.1]{On}, for any algebraic
torus $T$ over $k$, there is a finite separable extension field
$K$ of $k$ satisfying that $T\times_{\fn{Spec}(k)}
\fn{Spec}(K)\simeq \bm{G}_{m,K}^n$; such a field $K$ is called a
splitting field of $T$.

Let $k$ be a field, $\pi$ be a finite group. We will say that the
field $k$ admits a $\pi$-extension if there is a Galois field
extension $K/k$ such that $\pi = \fn{Gal}(K/k)$.

%-----------------d1.1
\begin{defn} \label{d1.1}
Let $\pi$ be a finite group, $k$ be a field admitting a
$\pi$-extension. An algebraic torus $T$ over $k$ is called a
$\pi$-torus if it has a splitting field $K$ which is Galois over
$k$ with $\fn{Gal}(K/k) = \pi$.
\end{defn}

Let $\pi$ be a finite group.
Recall that a finitely generated $\bm{Z}\pi$-module $M$ is called a $\pi$-lattice if it is torsion-free as an abelian group.

If $T$ is a $\pi$-torus over a field $k$ with $\pi=\fn{Gal}(K/k)$,
then its character module $\fn{Hom}(T\times_{\fn{Spec}(k)}
\fn{Spec}(K),\bm{G}_{m,K})$ is a $\pi$-lattice. Conversely, every
$\pi$-lattice $M$ is isomorphic to the character module of some
algebraic $\pi$-torus $T$ over $k$ (as $\bm{Z}\pi$-modules)
\cite[page 36]{On,Sw5}.

%-------------------d1.2
\begin{defn} \label{d1.2}
Let $K/k$ be a finite Galois field extension with $\pi=\fn{Gal}(K/k)$.
Let $M=\bigoplus_{1\le i\le n} \bm{Z}\cdot e_i$ be a $\pi$-lattice.
We define an action of $\pi$ on $K(M)=K(x_1,\ldots,x_n)$,
the rational function field of $n$ variables over $K$,
by $\sigma\cdot x_j=\prod_{1\le i\le n} x_i^{a_{ij}}$ if $\sigma\cdot e_j=\sum_{1\le i\le n} a_{ij} e_i \in M$,
for any $\sigma \in \pi$ (note that $\pi$ acts on $K$ also).
The fixed field is denoted by $K(M)^\pi$.
\end{defn}

Let $K/k$ be a finite Galois extension with $\pi=\fn{Gal}(K/k)$.
There is a duality between the category of algebraic $\pi$-tori
defined over $k$ and the category of $\pi$-lattices. In fact, if
$T$ is a $\pi$-torus and $M$ is its character module, then the
function field of $T$ is isomorphic to $K(M)^\pi$ (see \cite[page
36]{On,Sw5}). Thus the study of rationality problems of $\pi$-tori
is reduced to that of $\pi$-lattices.

%--------------------d1.3
\begin{defn}[{\cite[page 86]{EM1,EM4}}] \label{d1.3}
Let $\pi = \fn{Gal}(K/k)$ be a finite group where $K/k$ is a
Galois extension. Define an equivalence relation in the category
of $\pi$-lattices: Two $\pi$-lattices $M$ and $N$ are equivalent,
denoted by $M-N$, if the fields $K(M)^\pi$ and $K(N)^\pi$ are
stably isomorphic over $k$, i.e.\ $K(M)^\pi (X_1,\ldots,X_m)\simeq
K(N)^\pi(Y_1,\ldots,Y_n)$ for some algebraically independent
elements $X_i$, $Y_j$.

Let $\pi$ be a finite group. Define a commutative monoid $T(\pi)$
as follows. As a set, $T(\pi)$ is the set of all equivalence
classes $[M]$ under the equivalence relation ``$-$" defined above
(note that $[M]$ is the equivalence class containing the
$\pi$-lattice $M$); the monoid operation is defined by $[M]+[N]
=[M \oplus N]$.
\end{defn}

%--------------------t1.4
\begin{theorem}[{\cite[page 95, Theorem 3.3; page 187]{EM4}}] \label{t1.4}
Let $\pi$ be a finite group.
Then the following statements are equivalent:
\begin{enumerate}
\item[{\rm (1)}]
$\pi$ is isomorphic to
\begin{enumerate}
\item[{\rm (i)}] a cyclic group $C_n$ where $n$ is any positive
integer, or \item[{\rm (ii)}] a dihedral group $D_m$ of order $2m$
where $m$ is an odd integer $\ge 3$, or \item[{\rm (iii)}] a
direct product $C_{q^f}\times D_m$ where $q$ is an odd prime
number, $f\ge 1$, $m$ an odd integer $\ge 3$, $\gcd\{q,m\}=1$ and
for any prime divisor $p$ of $m$, $(\bm{Z}/q^f
\bm{Z})^{\times}=\langle\bar{p}\rangle$, or \item[{\rm (iv)}]
$Q_{4m}=\langle\sigma,\tau:
\sigma^{2m}=\tau^4=1,\sigma^m=\tau^2,\tau^{-1}\sigma\tau=\sigma^{-1}\rangle$,
the generalized quaternion group of order $4m$, where $m\ge 3$ is
an odd integer and $p\equiv 3 \pmod{4}$ for any prime divisor $p$
of $m$.
\end{enumerate}
\item[{\rm (2)}]
$T(\pi)\simeq C(\bm{Z}\pi)/C^q(\bm{Z}\pi) \simeq C(\Omega_{\bm{Z}\pi})$ where $\Omega_{\bm{Z}\pi}$ is a maximal $\bm{Z}$-order in $\bm{Q}\pi$ containing $\bm{Z}\pi$.
\item[{\rm (3)}]
$T(\pi)$ is a finite group.
\end{enumerate}
\end{theorem}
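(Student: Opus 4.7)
The plan is to prove the implications in the cycle $(2) \Rightarrow (3) \Rightarrow (1) \Rightarrow (2)$, with the bulk of the effort devoted to the last implication, which is the piece the authors note was never completed in \cite{EM4}. The implication $(2) \Rightarrow (3)$ is immediate from the Jordan--Zassenhaus theorem, since $C(\Omega_{\bm{Z}\pi})$ is the locally free class group of a $\bm{Z}$-order in the finite-dimensional semisimple algebra $\bm{Q}\pi$ and is therefore finite. For $(3) \Rightarrow (1)$, I would argue contrapositively: for every finite group $\pi$ outside the list in (1), exhibit an infinite family of pairwise stably non-isomorphic $\pi$-lattices, which via the standard identification of $T(\pi)$ with the flabby class monoid $F(\pi)$ amounts to showing $F(\pi)$ is infinite. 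A case analysis organized by the Sylow and action structure of $\pi$ (for example, $\pi$ containing a noncyclic $p$-subgroup in a bad position, or a dihedral $2$-section, etc.) produces such families.

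For $(1) \Rightarrow (2)$, the strategy proceeds in three layers. First, identify $T(\pi)$ with the flabby class monoid $F(\pi)$: given a $\pi$-lattice $M$, form a flabby resolution $0 \to M \to P \to F \to 0$ with $P$ permutation and $F$ flabby; the class $[F]$ modulo permutation projectives is a complete stable-birational invariant, so the assignment $[M] \mapsto [F]$ defines a monoid isomorphism $T(\pi) \xrightarrow{\sim} F(\pi)$. Second, for each of the four families of groups in (1), show that every flabby $\pi$-lattice is in fact locally free over $\bm{Z}\pi$, so that $F(\pi)$ embeds into the projective class monoid; modding out by the classes of permutation projectives then yields $F(\pi) \cong C(\bm{Z}\pi)/C^q(\bm{Z}\pi)$. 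Third, show that extension of scalars induces a surjection $C(\bm{Z}\pi) \twoheadrightarrow C(\Omega_{\bm{Z}\pi})$ whose kernel is exactly $C^q(\bm{Z}\pi)$, using the explicit Wedderburn decomposition of $\bm{Q}\pi$ in each case.

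The concrete case analysis carries the real content. For $C_n$ the order $\bm{Z}C_n$ is commutative and the invertible modules are controlled by the class groups of the cyclotomic subrings $\bm{Z}[\zeta_d]$ for $d \mid n$; for $D_m$ with $m$ odd, the Wedderburn components of $\bm{Q}D_m$ are matrix algebras over cyclotomic fields in which the Eichler condition holds; the product groups $C_{q^f} \times D_m$ under the arithmetic hypothesis on $(\bm{Z}/q^f)^\times$ reduce, via the pro-cyclic action of $p$ on the cyclotomic factor, to a twisted dihedral case; and finally $Q_{4m}$ with $p \equiv 3 \pmod 4$ forces totally definite quaternion algebras to appear as Wedderburn components, where the Eichler condition fails. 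I expect the hard part to be precisely the $Q_{4m}$ case, both in the second layer (showing flabby implies locally free in the presence of totally definite quaternion components, where cancellation is unavailable) and in the third layer (computing the kernel of $C(\bm{Z}Q_{4m}) \to C(\Omega_{\bm{Z}Q_{4m}})$ without Eichler). The authors' explicit remark that Swan detected and corrected a mistake on \cite[p.96]{EM4} strongly suggests that this is the delicate point where the original argument collapsed and where the present paper is filling in the repair, likely via a careful treatment of twisted group rings and the Fröhlich--Swan description of locally free classes over a quaternion maximal order.
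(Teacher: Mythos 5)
Your skeleton is largely the right one: the cycle $(2)\Rightarrow(3)\Rightarrow(1)\Rightarrow(2)$ (the paper factors the middle step through an intermediate condition $(1')$ and only supplies the new proof of $(1)\Rightarrow(2)$, citing \cite{EM4} for the rest), the identification $T(\pi)\simeq F_\pi$ via flabby resolutions, the reduction to class groups of cyclotomic subrings, the appearance of twisted group rings for $D_m$ and $C_{q^f}\times D_m$, and the role of totally definite quaternion algebras for $Q_{4m}$ where Eichler fails. Your diagnosis that the hard case is $Q_{4m}$ also matches the paper's emphasis. But there are two genuine gaps.

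First, your second layer --- ``show that every flabby $\pi$-lattice is in fact locally free over $\bm{Z}\pi$'' --- is false as stated and is not what is needed. Permutation lattices $\bm{Z}[\pi/\pi']$ with $\pi'\neq 1$ are flabby but are not locally free of rank one over $\bm{Z}\pi$, and invertible lattices are generally not locally free either. What the argument actually requires (Theorem \ref{t2.14}(1)) is the weaker statement that for every \emph{invertible} $\pi$-lattice $M$ there exists a projective ideal $\c{A}$ over $\bm{Z}\pi$ with $[M]^{fl}=[\c{A}]^{fl}$, i.e.\ equality of flabby classes, not an isomorphism of modules. And before one can even restrict attention to invertible lattices, one must first show $T(\pi)=T^g(\pi)$; the paper does this by observing that for the groups in (1) all Sylow subgroups are cyclic, so by Theorem \ref{t2.7} every coflabby lattice is invertible, and a coflabby resolution plus Lemma \ref{l2.5}(1) replaces an arbitrary lattice by an invertible one with the same $[\cdot]^{fl}$. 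Your outline omits this reduction entirely.

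Second, and more fundamentally, you have not identified the technical engine that makes the case-by-case computation possible and that is precisely the repair of the mistake Swan found in \cite[p.~96]{EM4}. Writing $\langle\sigma\rangle$ for the distinguished cyclic normal subgroup, one wants a d\'evissage expressing $[M]^{fl}$ as a M\"obius-type sum of classes of the torsion-free quotients $(M/\Phi_d(\sigma)M)_0$, which are modules over the orders $\Lambda_d=\bm{Z}\pi/\langle\Phi_d(\sigma)\rangle$. In the cyclic case this is Swan's Theorem 5.1 in \cite{Sw6}, built from exact sequences $0\to \bm{Z}\pi/\langle f(\sigma)\rangle\to \bm{Z}\pi/\langle f(\sigma)g(\sigma)\rangle\to \bm{Z}\pi/\langle g(\sigma)\rangle\to 0$ in which the first map is multiplication by $g(\sigma)$. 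When $\pi$ is nonabelian with $\tau\sigma\tau^{-1}=\sigma^{-1}$, multiplication by $g(\sigma)$ is \emph{not} a $\bm{Z}\pi$-module map, so these sequences do not exist. The paper's fix is the ``twisted'' lattice $M^*$ of Definition \ref{d4.2}, defined by $\lambda * x = \epsilon(\lambda)\,(\lambda\cdot x)$ with $\epsilon(\lambda)\in\{\pm 1\}$ recording conjugation on $\sigma$, together with the construction in Theorems \ref{t4.3} and \ref{t4.4} of explicit two-sided ideals $I^{(k)},J^{(k)}$ and multiplication maps by central elements $\widetilde{\Phi}_d=\sigma^{-\varphi(d)/2}\Phi_d(\sigma)$, so that the d\'evissage formula involves $[(M^*/\Phi_d(\sigma)M^*)_0]^{fl}$ rather than $[(M/\Phi_d(\sigma)M)_0]^{fl}$ for $d\ge 3$. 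Without this device there is no way to decompose $[M]^{fl}$ into pieces supported on single Wedderburn components, and the case analysis by $\Lambda_d$ (Dedekind domain, tamely ramified twisted group ring, or maximal order) cannot get started. Your proposal gestures at the right classes of algebras but does not supply, or even name, the machinery that actually reduces an invertible lattice to them.
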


The purpose of this article is to give a proof of Theorem
\ref{t1.4} supplementing the proof outlined in \cite{EM4}. Note
that the definition of the locally free class groups
$C(\bm{Z}\pi)$, $C(\Omega_{\bm{Z}\pi})$ and the associated group
$C^q(\bm{Z}\pi)$ may be found in Definition \ref{d2.11} and
Definition \ref{d2.12}; the isomorphisms $T(\pi)\simeq
C(\bm{Z}\pi)/C^q(\bm{Z}\pi)$ and $C(\bm{Z}\pi)/C^q(\bm{Z}\pi)
\simeq C(\Omega_{\bm{Z}\pi})$ are described in Definition
\ref{d2.13}.

The main ideas of the proof of $(1)\Rightarrow(2)$ in Theorem
\ref{t1.4} will be explained at the beginning of Section 5.

The organization of this paper is as follows. In Section 2, the
notions of flabby, coflabby, invertible, permutation
$\pi$-lattices are recalled. Some fundamental results are
summarized also. Section 3 contains the definitions of twisted
group rings, denoted by $S\circ G$, and crossed-product orders,
denoted by $(S\circ G)_f$ (where $f$ is a $2$-cocycle of $G$).
Some results of Rosen's Ph.D. dissertation (unpublished in the
journals) will be quoted for the convenience of the reader. We suggest the reader to consult similar results in Lee's paper \cite{Lee}.
Section 4 is a first step of the proof of $(1)\Rightarrow (2)$ of
Theorem \ref{t1.4}; this section contains a devissage theorem for
$[M]^{fl}$ where $M$ is an invertible $\pi$-lattice (for the definition of $[M]^{fl}$, see Definition \ref{d2.1}). Section 5 is
devoted to the proof of $(1)\Rightarrow (2)$ of Theorem
\ref{t1.4}. In the final section, we compute
$C(\Omega_{\bm{Z}\pi})$ when $\pi$ are the groups in Theorem
\ref{t1.4}. As a consequence, we deduce a result about
$D_{p^c}$-tori; the case of $D_p$-tori was proved by Hoshi, Kang
and Yamasaki  by a different method \cite{HKY}. On the other hand,
some properties of $\pi$-lattices proved by Colliot-Th\'el\`ene and
Sansuc, when $\pi$ is the Klein four-group or the quaternion group
of order $8$ \cite[pages 186-187]{CTS}, will be generalized to the
situation when $\pi$ is some dihedral $2$-group or quaternion
groups of order 8, 16, 32, etc.; see Proposition \ref{p6.8} and
Proposition \ref{p6.9}.

\bigskip
Terminology and notations.
Throughout the paper, we denote by $k$ a field and by $\pi$ a finite group.
We denote by $\bm{Z}\pi$ the integral group ring of the group $\pi$.

Denote by $C_n$ (resp.\ $D_n$) the cyclic group of order $n$
(resp.\ the dihedral group of order $2n$). The group $Q_{4n}$
denotes the generalized quaternion group of order $4n$ where $n\ge
2$, i.e.\
$Q_{4n}=\langle\sigma,\tau:\sigma^{2n}=\tau^4=1,\sigma^n=\tau^2,\tau^{-1}\sigma\tau=\sigma^{-1}\rangle$.

If $q$ is a prime power, $\bm{F}_q$ denotes the finite field with
$q$ elements. For any positive integer $n\ge 2$, $\zeta_n$ denotes
a primitive $n$-th root of unity and $\Phi_n(X)\in\bm{Z}[X]$ the
$n$-th cyclotomic polynomial. $(\bm{Z}/n\bm{Z})^{\times}$ is the
group of units of the ring $\bm{Z}/n\bm{Z}$. A commutative integral domain $R$ is called a DVR if it is a
discrete rank-one valuation ring. If $R$ is a ring, $M_n(R)$
denotes the matrix ring of all $n \times n$ matrices over $R$. If $M$ is an $A$-module where $A$ is a ring, we denote
by $M^{(n)}$ the direct sum of $n$ copies of the module $M$.

When $\pi$ is a finite group and $\Lambda$ is a $\bm{Z}$-order satisfying that
$\bm{Z}\pi \subset \Lambda \subset \bm{Q}\pi$ (see \cite[page 524,
Definition 23.2]{CR1} for the definition of an order), a locally free $\Lambda$-lattice of rank $n$ is a finitely generated $\Lambda$-module $M$ such that $M_P \simeq (\bm{Z}_P \otimes_{\bm{Z}} \Lambda)^{(n)}$ for any non-zero prime ideal $P$ of $\bm{Z}$ where $\bm{Z}_P$ is the localization of $\bm{Z}$ at the prime ideal $P$ and $M_P=\bm{Z}_P \otimes_{\bm{Z}}M$ \cite[page 382, Defintion 55.28]{CR2}. A projective ideal over $\bm{Z}\pi$ is a left ideal $\c{A}$ of $\bm{Z}\pi$  such that $\c{A}$ is also a projective $\bm{Z}\pi$-module. By \cite{Sw1}, $\c{A}$ is a projective ideal over $\bm{Z}\pi$ if and only if it is a locally free $\bm{Z}\pi$-lattice of rank one.

We remind the reader that the definitions of $T(\pi)$, $T^g(\pi)$,
$C(\Lambda)$ and $[M]^{fl}$, are given in Definition \ref{d1.3},
Definition \ref{d2.8}, Definition \ref{d2.11} and Definition
\ref{d2.1}; the monoid $F_\pi$ is defined in the paragraph before
Definition \ref{d2.1}.

%------------------------------S2
\section{Preliminaries}

In this section we recall some notions related to $\pi$-lattices
and summarize several results in \cite{EM4} and \cite{Sw5}.

A $\pi$-lattice $M$ is called a permutation lattice if $M$ has a
$\bm{Z}$-basis permuted by $\pi$; $M$ is called an invertible
lattice if it is a direct summand of some permutation lattice. A
$\pi$-lattice $M$ is called a flabby lattice (or a flasque lattice) if $H^{-1}(\pi',M)=0$
for any subgroup $\pi'$ of $\pi$; it is called coflabby (or a coflasque lattice) if
$H^1(\pi',M)=0$ for any subgroup $\pi'$ of $\pi$. For details, see
\cite{EM4,CTS,Sw5}.

Two $\pi$-lattices $M_1$ and $M_2$ are similar, denoted by
$M_1\sim M_2$, if $M_1\oplus P_1\simeq M_2\oplus P_2$ for some
permutation $\pi$-lattices $P_1$ and $P_2$. The flabby class
monoid $F_\pi$ is the monoid whose elements consist of flabby
$\pi$-lattices under the above similarity relation. A typical
element in $F_\pi$ is $[M]$, the equivalence class containing $M$
where $M$ is a flabby $\pi$-lattice; the monoid operation on
$F_\pi$ is defined by $[M_1]+[M_2]=[M_1\oplus M_2]$. Thus $F_\pi$
becomes an abelian monoid and $[P]$ is the identity element of it
where $P$ is any permutation $\pi$-lattice \cite[page 33]{Sw5}.

%-----------------d2.1
\begin{defn} \label{d2.1}
Let $\pi$ be a finite group, $M$ be any $\pi$-lattice. Then $M$
has a flabby resolution, i.e.\ there is an exact sequence of
$\pi$-lattices: $0\to M\to P\to E\to 0$ where $P$ is a permutation
lattice and $E$ is a flabby lattice \cite[Lemma 1.1]{EM4}. The
class $[E]\in F_\pi$ is uniquely determined by the lattice $M$
\cite[Lemma 8.7]{Sw5}. We define $[M]^{fl}=[E]\in F_\pi$.
Sometimes we will say that $[M]^{fl}$ is permutation or invertible
if the class $[E]$ contains a permutation or invertible lattice.
\end{defn}

Be aware that the equivalence relation $M-N$ in Definition
\ref{d1.3} is different from the above similarity relation
$M_1\sim M_2$. The monoids $T(\pi)$ in Definition \ref{d1.3} and
the above monoid $F_\pi$ are isomorphic through the following
lemma.

%--------------------l2.2
\begin{lemma} \label{l2.2}
Let $\pi$ be a finite group.

{\rm (1)} If $M$ and $N$ are $\pi$-lattices.
Then $M-N$ if and only if $[M]^{fl}=[N]^{fl}$.

{\rm (2)} Define a monoid homomorphism $\Phi: T(\pi)\to F_\pi$ by $\Phi([M])=[M]^{fl}$.
Then $\Phi$ is an isomorphism.
\end{lemma}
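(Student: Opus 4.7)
The plan is to reduce part (2) to part (1) together with a few routine verifications, and to attack part (1) by invoking the classical Endo--Miyata / no-name-lemma machinery already developed in the cited references \cite{EM1, EM4, Sw5}.

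The central input for part (1) is the following auxiliary statement, which I would establish first: for any flabby resolution $0 \to M \to P \to E \to 0$, the invariant fields $K(M)^\pi$ and $K(E)^\pi$ are stably $k$-isomorphic. Dually, the lattice sequence corresponds to an exact sequence of $\pi$-tori $1 \to T_E \to T_P \to T_M \to 1$; since $P$ is permutation, $T_P$ is a quasi-trivial torus, and its function field $k(T_P)=K(P)^\pi$ is $k$-rational. The flabbiness of $E$ is precisely the cohomological hypothesis needed to trivialize (by a Hilbert 90 argument) the $T_E$-torsor $T_P \to T_M$ over a dense open of $T_M$, yielding a birational equivalence $T_P \sim T_M \times T_E$. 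Combined with rationality of $k(T_P)$, this delivers the claimed stable $k$-isomorphism.

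Given the auxiliary fact, the $(\Leftarrow)$ direction of (1) is essentially formal: from $[M]^{fl}=[N]^{fl}$, i.e.\ $E_M\oplus Q_1\cong E_N\oplus Q_2$ for permutation $Q_1,Q_2$, passing to invariant fields and using that $K(E\oplus Q)^\pi$ is purely transcendental over $K(E)^\pi$ (no-name lemma again) yields $K(E_M)^\pi$ and $K(E_N)^\pi$ stably isomorphic, and then the auxiliary fact gives $M-N$. For the $(\Rightarrow)$ direction, $M-N$ combined with the auxiliary fact gives stable isomorphism of $K(E_M)^\pi$ and $K(E_N)^\pi$; I would then upgrade this to $E_M\sim E_N$ in $F_\pi$ using the well-definedness of $[M]^{fl}$ (\cite[Lemma 8.7]{Sw5}, already cited in Definition \ref{d2.1}) together with a Schanuel-type cancellation argument. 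I expect this rigidity step---recovering the similarity class of a flabby lattice from the stable birational class of its invariant field---to be the principal technical obstacle.

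Part (2) is then quick. Well-definedness and injectivity of $\Phi$ follow directly from (1); $\Phi$ is a monoid homomorphism because the direct sum of flabby resolutions of $M$ and $N$ is itself a flabby resolution of $M\oplus N$; and for surjectivity, given any flabby $E$, choose a surjection $Q \twoheadrightarrow E$ from a permutation lattice $Q$ (possible since $E$ is finitely generated over $\bm{Z}\pi$) and let $N$ be its kernel---then $0 \to N \to Q \to E \to 0$ is a flabby resolution of $N$, giving $\Phi([N])=[N]^{fl}=[E]$.
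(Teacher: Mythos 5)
Your proof of part (1) rests on an auxiliary claim that is false: it is \emph{not} true that a flabby resolution $0\to M\to P\to E\to 0$ forces $K(M)^\pi$ and $K(E)^\pi$ to be stably $k$-isomorphic. To see this, take any invertible $\pi$-lattice $E$ with $E\oplus E'\cong P$ permutation. Then $0\to E\to P\to E'\to 0$ is itself a flabby resolution of $E$, so your claim would give $K(E)^\pi\sim_k K(E')^\pi$. But $[E]^{fl}=[E']$ and, in $F_\pi$, $[E]+[E']=[P]=0$, so $[E]^{fl}=-[E]$; likewise $[E']^{fl}=-[E']=[E]$. Granting the lemma you are proving, $K(E)^\pi\sim_k K(E')^\pi$ would force $[E]^{fl}=[E']^{fl}$, i.e.\ $-[E]=[E]$, i.e.\ $2[E]=0$. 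This fails whenever $T^g(\pi)$ has an element of order $>2$, e.g.\ $\pi=C_{23}$, where $T^g(\pi)\cong C(\bm{Z}[\zeta_{23}])\cong\bm{Z}/3\bm{Z}$. The underlying confusion is that the no-name-lemma/Hilbert~90 trivialization applies to sequences with the permutation lattice at an \emph{end} ($0\to M\to N\to P\to 0$ or $0\to P\to N\to M\to 0$), not in the middle; and even granting a birational splitting $T_P\sim T_M\times T_E$, rationality of $k(T_P)$ only says $\fn{Frac}(k(T_M)\otimes_k k(T_E))\sim_k k$, which does not yield $k(T_M)\sim_k k(T_E)$. Your ``$(\Rightarrow)$ rigidity step'' is also not substantiated: deducing $[E_M]=[E_N]$ in $F_\pi$ from $K(E_M)^\pi\sim_k K(E_N)^\pi$ is precisely the hard direction of (1) restricted to flabby lattices, so as written it is circular.

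The paper goes a different route for (1): following Lenstra's Theorem 1.7, stable $k$-isomorphism of $K(M)^\pi$ and $K(N)^\pi$ is characterized by the existence of coresolutions $0\to M\to E\to P\to 0$ and $0\to N\to E\to Q\to 0$ with a \emph{common} middle lattice $E$ and permutation $P,Q$; Swan's Lemma 8.8 in \cite{Sw5} then shows this is equivalent to $[M]^{fl}=[N]^{fl}$. You should replace your auxiliary fact with this. Your part (2) is fine: injectivity via Lemma \ref{l2.4} and additivity via direct sums of flabby resolutions match the paper, and your surjectivity argument (choose a permutation surjection $Q\twoheadrightarrow E$ and take $N=\ker$, so $0\to N\to Q\to E\to 0$ is a flabby resolution with $[N]^{fl}=[E]$) is actually more direct than the paper's dualization argument and equally correct.
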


\begin{proof}
Let $\pi=\fn{Gal}(K/k)$. By the same idea in the proof of
\cite[Theorem 1.7]{Le}, it is not difficult to show that
$K(M)^\pi$ and $K(N)^\pi$ are stably isomorphic over $k$ if and
only if there exist exact sequences of $\pi$-lattices $0\to M\to
E\to P\to 0$ and $0\to N\to E\to Q\to 0$ where $E$ is some
$\pi$-lattice and $P$, $Q$ are permutation $\pi$-lattices. The
latter condition is equivalent to $[M]^{fl}=[N]^{fl}$ by
\cite[Lemma 8.8]{Sw5}.

For the isomorphism of $\Phi$, note that $\Phi$ is well-defined by
(1). If $\Phi([M])=0$, then $[M]^{fl}=0$. Thus $K(M)^\pi$ is
stably $k$-rational by the following Lemma \ref{l2.4}. Hence
$M-\bm{Z}$ where $\bm{Z}$ is the  $\pi$-lattice with trivial $\pi$
actions. Thus $\Phi$ is injective. On the other hand, if $E$ is
any flabby $\pi$-lattice, let $E^0=\fn{Hom}_{\bm{Z}} (E,\bm{Z})$
be its dual lattice. Take a flabby resolution of $E^0$, $0\to
E^0\to P\to F\to 0$ as in Definition \ref{d2.1}. We get an exact
sequence $0\to F^0\to P^0\to E\to 0$. Thus
$[E]=[F^0]^{fl}=\Phi([F^0])$ and $\Phi$ is surjective.
\end{proof}

The following lemma is due to  Endo and Miyata \cite[Theorem
1.2]{EM2}, Voskresenskii and Saltman.
%--------------------l2.3
 \begin{lemma} {\rm (e.g. {\cite[Theorem 1.7; Sa, Theorem 3.14]{Le}})} \label{l2.4} Let
$K/k$ be a finite Galois field extension, $\pi=\fn{Gal}(K/k)$, $M$
be a $\pi$-lattice. Then {\rm (i)} $K(M)^\pi$ is stably $k$-rational if and
only if $[M]^{fl}=0$ in $F_\pi$, and {\rm (ii)} $K(M)^\pi$ is retract $k$-rational if and
only if $[M]^{fl}$ is an invertible lattice.
\end{lemma}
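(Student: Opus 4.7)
The plan is to deduce (i) directly from Lemma \ref{l2.2} and to prove (ii) along the lines of Saltman's construction of a retract, with retract rationality witnessed by an explicit $k$-subalgebra.

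For (i), I would observe that the trivial $\pi$-lattice $\bm{Z}$ is itself a permutation lattice, so the trivial resolution $0\to \bm{Z}\to \bm{Z}\to 0\to 0$ already gives $[\bm{Z}]^{fl}=0$, while $K(\bm{Z})^\pi=K(x)^\pi=K^\pi(x)=k(x)$ since $\pi$ acts trivially on the adjoined variable. Being stably $k$-rational is exactly being stably isomorphic over $k$ to $k$, equivalently to $K(\bm{Z})^\pi=k(x)$. By Lemma \ref{l2.2}, this happens if and only if $[M]^{fl}=[\bm{Z}]^{fl}=0$, which proves (i).

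For the $(\Leftarrow)$ direction of (ii), I would choose a flabby resolution $0\to M\to P\to E\to 0$ with $E$ invertible, pick $F$ invertible with $E\oplus F\cong Q$ permutation, and augment to the short exact sequence $0\to M\to P\oplus F\to Q\to 0$. By the no-name lemma (Speiser's theorem applied to the permutation quotient $Q$), $K(P\oplus F)^\pi$ is purely transcendental over $K(M)^\pi$, so it suffices to establish retract $k$-rationality of $K(P\oplus F)^\pi$. For this, $K(P)^\pi$ is $k$-rational since $P$ is a permutation lattice, and an explicit $k$-subalgebra $A\subset K(P\oplus F)^\pi$ together with retraction morphisms $\varphi,\psi$ can be constructed from the algebra-level embedding $F\hookrightarrow Q$ after localizing away a suitable denominator $f\in k[X_1,\ldots,X_n]$, using that $Q$ yields a purely transcendental extension. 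Since retract rationality ascends through purely transcendental extensions and the transcendence step above is such, the property transports back to $K(M)^\pi$.

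For the $(\Rightarrow)$ direction of (ii), I would start from the data $A,\varphi,\psi$ witnessing retract rationality of $K(M)^\pi$ and exploit the $\pi$-equivariant structure to interpret $\psi$ as a specialization that splits a chosen flabby resolution $0\to M\to P\to E\to 0$ up to a permutation summand, thereby exhibiting $[M]^{fl}$ as a direct summand of a permutation lattice, i.e., as invertible. The main obstacle is the $(\Leftarrow)$ half of (ii): retract rationality is not a formal consequence of invertibility of $[M]^{fl}$ and must be witnessed at the coordinate-ring level, so the construction of the retraction morphisms cannot be evaded and requires Saltman's explicit device, as carried out in Theorem~3.14 of \cite{Sa} (compare Theorem~1.7 of \cite{Le}).
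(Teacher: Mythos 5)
The paper does not prove Lemma \ref{l2.4}; it is stated as a citation (to Lenstra's Theorem 1.7 and Saltman's Theorem 3.14, with the result originally due to Endo--Miyata and Voskresenskii), so there is no in-paper argument to compare against. Your treatment of (i) is sound and not circular: the paper's proof of Lemma \ref{l2.2}(1) relies only on \cite[Theorem 1.7]{Le} and \cite[Lemma 8.8]{Sw5} and not on Lemma \ref{l2.4}, so deducing (i) from $M - \bm{Z} \Leftrightarrow [M]^{fl}=[\bm{Z}]^{fl}=0$ together with $K(\bm{Z})^\pi = k(x)$ is legitimate. Be aware, though, that this is more a repackaging than an independent proof: Lemma \ref{l2.2}(1) is itself essentially the Lenstra content being cited in Lemma \ref{l2.4}(i).

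Part (ii) has genuine gaps. In the $(\Leftarrow)$ direction you need retract rationality to \emph{descend} along the purely transcendental extension $K(P\oplus F)^\pi = K(M)^\pi(\text{vars})$ to reach $K(M)^\pi$ (you write ``ascends,'' which is the wrong direction); this descent is a nontrivial theorem of Saltman and must be cited. Moreover, retract rationality of $K(P\oplus F)^\pi$ is never actually established: the ``explicit $k$-subalgebra $A$'' and the role of the embedding $F\hookrightarrow Q$ are left unspecified, and $Q=E\oplus F$ plays no transparent part. The standard route is to observe that $P\oplus F$ is itself invertible (it is a direct summand of the permutation lattice $P\oplus Q'$ where $Q'\cong F\oplus G$ is a permutation complement of $F$), and to produce the retraction $\varphi,\psi$ on coordinate rings directly from the split inclusion $P\oplus F\hookrightarrow P\oplus Q'$; this is the crux of Saltman's construction and should not be elided. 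The $(\Rightarrow)$ direction is only a slogan: ``interpret $\psi$ as a specialization that splits a flabby resolution up to a permutation summand'' is precisely the hard content of Saltman's Theorem 3.14 and requires a genuine construction of a $\pi$-lattice from the retraction data. You are right that the full argument cannot avoid Saltman's explicit device, but as written your (ii) is an outline rather than a proof.
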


%--------------------l2.4
\begin{lemma} \label{l2.5}
Let $\pi$ be a finite group, and let $0\to M'\to M\to M''\to 0$ be an exact sequence of $\pi$-lattices.

{\rm (1) (e.g. \cite[Proposition 1.5]{Le})} If $M''$ is
invertible, then $[M]^{fl}=[M']^{fl}+[M'']^{fl}$.

{\rm (2) (\cite[Lemma 2.2]{EM4})} If all the Sylow subgroups of
$\pi$ are cyclic and $\widehat{H}^0 (\pi',M')=0$ for any subgroup
$\pi'\subset\pi$ , then $[M]^{fl}=[M']^{fl}+[M'']^{fl}$.
\rm{(}Note that $\widehat{H}^0 (\pi',M')$ denotes the Tate
cohomology.\rm{)}
\end{lemma}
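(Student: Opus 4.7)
The plan is to derive both parts by constructing a flabby resolution of $M$ starting from the given short exact sequence together with a flabby resolution $0\to M'\to P\to E\to 0$ of $M'$, and then verifying additivity in $F_\pi$ via a pushout. A key auxiliary fact used throughout is that $\fn{Ext}^1_{\bm{Z}\pi}(Q_1,Q_2)=0$ whenever $Q_1,Q_2$ are permutation $\pi$-lattices; this follows from Shapiro's lemma and the Mackey decomposition, which reduce each summand of the Ext to $H^1(\text{subgroup of }\pi,\bm{Z})=0$ (a finite group has no nontrivial homomorphisms to $\bm{Z}$).

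For part (1), I would use the invertibility of $M''$: pick an invertible lattice $M^*$ so that $Q:=M''\oplus M^*$ is a permutation lattice, and enlarge the given sequence to
\[
0\to M'\to M\oplus M^*\to Q\to 0.
\]
Fix a flabby resolution $0\to M'\to P\to E\to 0$, so $[M']^{fl}=[E]$. The pushout $G:=P\cup_{M'}(M\oplus M^*)$ sits in two short exact sequences
\[
0\to P\to G\to Q\to 0,\qquad 0\to M\oplus M^*\to G\to E\to 0.
\]
The first splits by the Ext-vanishing recalled above, so $G\simeq P\oplus Q$ is a permutation lattice, and the second sequence is then a flabby resolution of $M\oplus M^*$. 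It follows that $[M]^{fl}+[M^*]^{fl}=[E]=[M']^{fl}$. Combining with $[M'']^{fl}+[M^*]^{fl}=[M''\oplus M^*]^{fl}=[Q]^{fl}=0$ yields $[M]^{fl}=[M']^{fl}+[M'']^{fl}$.

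For part (2), the analogous construction does not work directly, since $M''$ need not be invertible and therefore cannot be absorbed into a permutation lattice. Instead I would pursue the horseshoe approach: fix flabby resolutions $0\to M'\to P_1\to E_1\to 0$ and $0\to M''\to P_2\to E_2\to 0$; the sole obstruction to extending the given map $M'\to P_1$ to a map $M\to P_1$ lies in $\fn{Ext}^1_{\bm{Z}\pi}(M'',P_1)$. By Shapiro this reduces to local cohomology $H^1(H_i,M''|_{H_i})$, where $P_1=\bigoplus_i\bm{Z}[\pi/H_i]$. The plan is to use the long exact sequence of Tate cohomology attached to $0\to M'\to M\to M''\to 0$ together with the hypothesis $\widehat{H}^0(\pi',M')=0$ to relate $H^1(\pi',M'')$ to vanishing terms coming from $M'$; the $2$-periodicity of Tate cohomology on cyclic Sylow subgroups then allows the obstruction to be killed (possibly after enlarging $P_1$ by an induced permutation lattice). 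Once the lift exists, horseshoe produces a flabby resolution of $M$ with middle term $P_1\oplus P_2$ (permutation, because $\fn{Ext}^1(P_2,P_1)=0$) and quotient a flabby extension of $E_2$ by $E_1$, yielding $[M]^{fl}=[E_1]+[E_2]=[M']^{fl}+[M'']^{fl}$.

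The main obstacle lies in part (2): precisely translating the hypothesis $\widehat{H}^0(\pi',M')=0$ into the vanishing of the Ext-obstruction $\fn{Ext}^1_{\bm{Z}\pi}(M'',P_1)$ via the cyclic-Sylow structure on $\pi$. Part (1) goes through by the pushout argument above without serious difficulty, since invertibility of $M''$ is exactly the hypothesis needed to bypass any cohomological obstruction.
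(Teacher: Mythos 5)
Part (1) is correct; the pushout argument with the $\fn{Ext}^1$-vanishing between permutation lattices is the standard Lenstra-style proof, and your final cancellation step is legitimate because $[M'']^{fl}$ has an explicit inverse $[M^*]^{fl}$ in $F_\pi$. (The paper itself does not reprove Lemma \ref{l2.5}; it only cites \cite{Le} and \cite{EM4}, so there is no internal proof to compare against.)

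Part (2), however, has a genuine gap. You propose a horseshoe argument starting from fixed flabby resolutions of $M'$ and $M''$, with the obstruction sitting in $\fn{Ext}^1_{\bm{Z}\pi}(M'',P_1)$, and you plan to kill it via the Tate long exact sequence together with $\widehat{H}^0(\pi',M')=0$. This does not go through: feeding $\widehat{H}^0(\pi',M')=0$ into the long exact sequence of $0\to M'\to M\to M''\to 0$ only shows that $\widehat{H}^{-1}(\pi',M)\to\widehat{H}^{-1}(\pi',M'')$ is surjective; it gives no vanishing of $\widehat{H}^{\pm 1}(\pi',M'')$ nor of the specific obstruction class, and enlarging $P_1$ by a further permutation summand cannot cancel a nonzero component of the obstruction. (Two further inaccuracies: the subgroups $H_i$ appearing in $P_1$ need not be cyclic, so $2$-periodicity is not available there, and the Shapiro step lands you in $\fn{Ext}^1_{\bm{Z}H_i}(M''|_{H_i},\bm{Z})$ rather than $H^1(H_i,M''|_{H_i})$.) The hypothesis on cyclic Sylow subgroups is never actually used in your outline, which is a sign that the route is off track: it is needed exactly to invoke Theorem \ref{t2.7}.

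The argument that works runs in the opposite direction, from a resolution of $M$ outward. Take a flabby resolution $0\to M\to P\to E\to 0$ of $M$ and restrict along $M'\hookrightarrow M$ to get $0\to M'\to P\to P/M'\to 0$ and $0\to M''\to P/M'\to E\to 0$. The Tate sequence of the first, using $\widehat{H}^{-1}(\pi',P)=0$ ($P$ permutation) and $\widehat{H}^0(\pi',M')=0$ (hypothesis), gives $\widehat{H}^{-1}(\pi',P/M')=0$, so $P/M'$ is flabby and hence $[M']^{fl}=[P/M']$. By Theorem \ref{t2.7}, $P/M'$ is invertible; choose $(P/M')^*$ with $(P/M')\oplus(P/M')^*\simeq Q$ permutation. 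Then $0\to(P/M')^*\to Q\to P/M'\to 0$ gives $[(P/M')^*]^{fl}=[P/M']=[M']^{fl}$, while $0\to M''\oplus(P/M')^*\to Q\to E\to 0$ gives $[M'']^{fl}+[(P/M')^*]^{fl}=[E]=[M]^{fl}$. Combining yields $[M]^{fl}=[M']^{fl}+[M'']^{fl}$. Starting from a resolution of $M$ is exactly what dissolves the obstruction you were fighting, since the embedding $M'\hookrightarrow P$ extends to $M\hookrightarrow P$ by construction.
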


%--------------------l2.5
\begin{lemma} \label{l2.6}
Let $\pi$ be a finite group, $M$ be a $\pi$-lattice.

{\rm (1) (e.g. \cite[Proposition 1.2]{Le})} $M$ is invertible if
and only if, for any coflabby $\pi$-lattice $C$, any short exact
sequence $0\to C\to E\to M\to 0$ splits.

{\rm (2) (\cite[Lemma 1.1]{EM6})} There is a short exact sequence
of $\pi$-lattices $0\to M\to C\to P\to 0$ such that $C$ is
coflabby and $P$ is permutation.
\end{lemma}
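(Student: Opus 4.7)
My plan is to prove (2) first and then use it to deduce both directions of (1).

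For (2), I dualize: since $M^0 = \fn{Hom}_{\bm Z}(M, \bm Z)$ is again a lattice, since the permutation lattices $\bm Z[\pi/\pi']$ are self-dual, and since flabby and coflabby interchange under $(-)^0$, the claim (2) is equivalent to producing an exact sequence $0 \to P' \to F \to M^0 \to 0$ with $P'$ permutation and $F$ flabby. To construct such an $F$, I choose a surjection $\alpha : P_1 \twoheadrightarrow M^0$ from a free $\bm Z\pi$-module $P_1$, set $L = \ker \alpha$, and apply the standard flabby resolution \cite[Lemma 1.1]{EM4} to $L$ to obtain $0 \to L \to Q \to F_0 \to 0$ with $Q$ permutation and $F_0$ flabby. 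Forming the pushout of $P_1 \leftarrow L \to Q$ produces a $\bm Z\pi$-lattice $F$ fitting into
\[
0 \to Q \to F \to M^0 \to 0 \quad \text{and} \quad 0 \to P_1 \to F \to F_0 \to 0.
\]
(Torsion-freeness of $F$ uses that $L$ is saturated in $P_1$ because $M^0$ is torsion-free.) The Tate long exact sequence applied to the second row, together with $\widehat H^{-1}(\pi', P_1) = \widehat H^{-1}(\pi', F_0) = 0$ for every subgroup $\pi' \le \pi$, forces $F$ to be flabby. Dualizing the first row then yields the conclusion of (2).

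For the forward direction of (1): if $M$ is a direct summand of the permutation lattice $\bigoplus_j \bm Z[\pi/\pi_j]$, then for any coflabby $C$ Shapiro's lemma gives $\fn{Ext}^1_{\bm Z\pi}(\bm Z[\pi/\pi_j], C) \cong H^1(\pi_j, C) = 0$, hence $\fn{Ext}^1_{\bm Z\pi}(M, C) = 0$ and every extension splits.

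For the reverse direction of (1): take a free-module surjection $P \twoheadrightarrow M$ with kernel $K$, i.e.\ $0 \to K \to P \to M \to 0$. Apply (2) to $K$ to get $0 \to K \to C \to P' \to 0$ with $C$ coflabby and $P'$ permutation, and form the pushout along $K \hookrightarrow C$ to obtain
\[
0 \to C \to Q \to M \to 0 \quad \text{and} \quad 0 \to P \to Q \to P' \to 0.
\]
The second sequence splits because $\fn{Ext}^1_{\bm Z\pi}(P', P) = 0$ (permutation lattices are coflabby, again by Shapiro's lemma), so $Q \cong P \oplus P'$ is permutation. The hypothesis then forces the first sequence to split, exhibiting $M$ as a direct summand of the permutation lattice $Q$; hence $M$ is invertible. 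The main technical obstacle is the pushout/Tate-cohomology verification that $F$ is flabby in (2); every remaining step is a routine application of Shapiro's lemma.
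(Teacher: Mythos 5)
The paper does not prove this lemma; it cites (1) to Lenstra \cite[Proposition 1.2]{Le} and (2) to Endo--Miyata \cite[Lemma 1.1]{EM6}, so there is no in-paper argument to compare against. Your proof is correct and is the standard one: for (2) you take a free cover $P_1 \twoheadrightarrow M^0$, apply the already-available flabby resolution to the kernel $L$, push out to get $0\to Q\to F\to M^0\to 0$ with $F$ flabby (the row $0\to P_1\to F\to F_0\to 0$ is the right one to feed into the Tate long exact sequence, and you use it), and dualize; for (1) the forward direction is Eckmann--Shapiro applied to the summands of a permutation cover, and the converse combines (2) with a pushout and the splitting of $0\to P\to Q\to P'\to 0$. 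I verified the pushout identifications, the torsion-freeness of $F$, and the vanishing $\fn{Ext}^1_{\bm Z\pi}(\bm Z[\pi/\pi'],C)\cong H^1(\pi',C)=0$; there are no gaps.
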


%---------------------t2.6
\begin{theorem}[{\cite[Theorem 1.5]{EM4}}] \label{t2.7}
Let $\pi$ be a finite group. Then all the flabby $\pi$-lattices
(resp.\ all the coflabby $\pi$-lattices) are invertible if and
only if all the Sylow subgroups of $\pi$ are cyclic.
\end{theorem}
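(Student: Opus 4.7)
The statement has a flabby version and a coflabby version. These are interchanged by the $\bm{Z}$-duality $M \mapsto M^{*}:=\fn{Hom}_{\bm{Z}}(M,\bm{Z})$, which swaps flabbiness with coflabbiness while preserving permutation and invertible classes. I therefore focus on the flabby version and split the biconditional into its two directions, reducing the forward direction to cyclic $p$-groups via a transfer argument and settling the converse by producing explicit counterexamples over $C_p\times C_p$ or over generalized quaternion $2$-groups.

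For the forward direction, assume every Sylow subgroup of $\pi$ is cyclic and let $M$ be flabby. By Lemma~\ref{l2.6}(1), $M$ is invertible precisely when $\fn{Ext}^1_{\bm{Z}\pi}(M,C) = 0$ for every coflabby $\pi$-lattice $C$. The restriction-corestriction identity $\fn{cor}\circ\fn{res} = [\pi:\pi_p]\cdot\fn{id}$, applied at each prime $p$, embeds $\fn{Ext}^1_{\bm{Z}\pi}(M,C)$ into $\prod_p \fn{Ext}^1_{\bm{Z}\pi_p}(M,C)$ over the Sylow subgroups. Restriction preserves flabbiness and coflabbiness, so it suffices to show the splitting of $0\to C\to E\to M\to 0$ as a sequence of $\pi_p$-lattices for each cyclic Sylow $\pi_p$. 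For a cyclic $p$-group, direct analysis -- combining the coflabby-permutation resolution of Lemma~\ref{l2.6}(2) with the additivity of $[-]^{fl}$ in Lemma~\ref{l2.5}, together with Tate-cohomology periodicity and the classical classification of $\bm{Z}\pi_p$-lattices -- shows that every flabby $\pi_p$-lattice is stably permutation. Thus $M\oplus P_1\cong P_2$ for some permutation $\pi_p$-lattices $P_1,P_2$, and then $\fn{Ext}^1_{\bm{Z}\pi_p}(M,C)\oplus \fn{Ext}^1_{\bm{Z}\pi_p}(P_1,C) \cong \fn{Ext}^1_{\bm{Z}\pi_p}(P_2,C)$; since each permutation summand $\bm{Z}[\pi_p/H]$ contributes $H^1(H,C) = 0$ by coflabbiness of $C$, both sides vanish and the extension splits.

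For the converse, suppose a Sylow subgroup $\pi_p$ is not cyclic. A classical theorem on $p$-groups asserts that a $p$-group with no $C_p\times C_p$ subgroup is either cyclic or, for $p=2$, generalized quaternion $Q_{2^n}$. Hence $\pi_p$ contains a subgroup $H$ with $H\cong C_p\times C_p$ or $H\cong Q_{2^n}$. In either case I construct an explicit flabby $H$-lattice $M_H$ that is not invertible: for $H=C_p\times C_p$, a standard choice is a flabby representative of a class in $F_H$ lying outside the image of invertible classes (e.g.\ arising from a flabby resolution of the augmentation ideal $I_H=\ker(\bm{Z} H\to\bm{Z})$ or a related Swan-type construction); for $H=Q_{2^n}$, an analogous construction based on the two-dimensional irreducible rational representation of $Q_{2^n}$ works. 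Then $\fn{Ind}_H^\pi M_H$ is flabby over $\pi$, since Shapiro's lemma together with Mackey decomposition computes its $\widehat{H}^{-1}$ on any subgroup of $\pi$ as a direct sum of $\widehat{H}^{-1}$-terms over subgroups of $H$ applied to $M_H$, all of which vanish. Mackey decomposition also exhibits $M_H$ as a direct summand of $\fn{Res}_H \fn{Ind}_H^\pi M_H$; since restriction and direct summands preserve invertibility, the non-invertibility of $M_H$ propagates to $\fn{Ind}_H^\pi M_H$, yielding the desired counterexample.

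The main obstacle is the explicit construction of a non-invertible flabby lattice over $C_p\times C_p$ (and separately over $Q_{2^n}$). For $C_p\times C_p$, the infinite representation type of $\bm{Z}(C_p\times C_p)$ makes $F_H$ rich, but producing a specific class that is demonstrably flabby yet not invertible requires a cohomological invariant separating invertible from general flabby classes -- typically a computation in $C(\bm{Z} H)$ modulo $C^q(\bm{Z} H)$, or an unramified Brauer-like obstruction. The quaternion case is genuinely separate, since $Q_{2^n}$ has no elementary abelian subgroup of rank~$2$, so the $C_p\times C_p$-construction cannot be pulled back; the counterexample must be produced intrinsically from the representation theory of $Q_{2^n}$, and this together with the stable-permutation statement for cyclic $p$-groups forms the most delicate technical point of the proof.
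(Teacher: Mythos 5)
The paper states Theorem~\ref{t2.7} purely as a citation to \cite{EM4} (Theorem 1.5) and gives no proof, so there is no in-paper argument to compare yours against; I evaluate your outline on its own. Your global framework is sound: $\bm{Z}$-duality does interchange flabby and coflabby while preserving the permutation and invertible classes, the restriction--corestriction argument correctly reduces the vanishing of $\fn{Ext}^1_{\bm{Z}\pi}(M,C)$ to the Sylow subgroups, and the Mackey/Shapiro bookkeeping in the converse (flabbiness of the induced module, and $M_H$ being a summand of $\fn{Res}_H\fn{Ind}_H^\pi M_H$) is correct. But there are two genuine gaps.

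In the forward direction, your pivotal claim --- that every flabby lattice over a cyclic $p$-group is stably permutation --- is false. By Lemma~\ref{l2.2}, Theorem~\ref{t1.4} and Theorem~\ref{t6.13} one has $F_{C_p}\simeq T(C_p)\simeq C(\Omega_{\bm{Z}C_p})\simeq C(\bm{Z}[\zeta_p])$, the ideal class group of $\bm{Q}(\zeta_p)$, which is nontrivial already for $p=23$; so there exist flabby (indeed invertible) $C_{23}$-lattices that are not stably permutation. The sentence ``$M\oplus P_1\simeq P_2$ for some permutation $\pi_p$-lattices $P_1,P_2$'' is therefore unavailable. What your argument would need instead is a local statement: $\fn{Ext}^1_{\bm{Z}\pi_p}(M,C)$ is $p$-torsion, hence equals $\fn{Ext}^1_{\bm{Z}_{(p)}\pi_p}(M_{(p)},C_{(p)})$, and one may hope that $M_{(p)}$ is stably permutation over $\bm{Z}_{(p)}\pi_p$ because the relevant class groups of $\bm{Z}_{(p)}[\zeta_{p^i}]$ are trivial. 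But that local claim must itself be proved, and the ingredients you list (Lemma~\ref{l2.5}, Lemma~\ref{l2.6}, Tate periodicity, ``classical classification of $\bm{Z}\pi_p$-lattices'') do not yield it: for $C_{p^n}$ with $n\ge 3$ there is no usable classification of $\bm{Z}_{(p)}$-lattices, so a genuinely different argument is required.

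In the converse direction, the reduction to $H\simeq C_p\times C_p$ or $H\simeq Q_{2^n}$ is correct, and the propagation to $\pi$ by induction is correct; but you do not construct the flabby, non-invertible lattice $M_H$ in either case, and you say so yourself (``the main obstacle''). Those constructions are the whole content of this implication --- for $C_p\times C_p$ a Colliot-Th\'el\`ene--Sansuc/Endo--Miyata type example, and for $Q_{2^n}$ an intrinsically quaternionic one --- and gesturing at ``a cohomological invariant separating invertible from general flabby classes'' does not supply one. As written, the proposal is an architecture for a proof rather than a proof.
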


In the literature there are two different definitions for metacyclic groups. A finite group is metacyclic if it is an extension of a cyclic group by another cyclic group (see \cite[page 160]{Is}). Thus we will call a metacyclic group $\pi$ a split metacyclic group if $\pi$ has a cyclic normal subgroup $\pi_0$ such that $\pi/\pi_0$ is cyclic and $\gcd\{|\pi_0|,|\pi/\pi_0|\}=1$. It is known that $\pi$ is split metacyclic if and only if all the Sylow subgroups of $\pi$ are cyclic \cite[page 160, Theorem 5.16]{Is}. However, in Zassenhaus's monograph \cite[page 174]{Za}, a finite group $\pi$ is called metacyclic group if $\pi'$ and $\pi/\pi'$ are cyclic groups where $\pi'$ is the commutator subgroup of $\pi$ (see \cite[page 175, Theorem 11]{Za} also).

\medskip
\begin{theorem}[{\cite[Theorem 2.1]{EM6}}] \label{t2.15}
Let $\pi$ be a finite group. Then the $\pi$-lattices which are
both flabby and coflabby are necessarily invertible if and only if
all the $p$-Sylow subgroups of $\pi$ are cyclic for odd prime $p$,
and all the $2$-Sylow subgroups of $\pi$ are cyclic or dihedral
(including the Klein four-group).
\end{theorem}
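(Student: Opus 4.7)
The plan is to treat the two implications separately, using Lemma~\ref{l2.6}(1) and a Sylow-subgroup reduction throughout.

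For the ($\Leftarrow$) direction, assume the Sylow hypothesis and let $M$ be a $\pi$-lattice that is both flabby and coflabby. By Lemma~\ref{l2.6}(1), invertibility of $M$ is equivalent to the vanishing of $\fn{Ext}^1_{\bm{Z}\pi}(M,C)$ for every coflabby $\pi$-lattice $C$, i.e., to the splitting of every short exact sequence $0\to C\to E\to M\to 0$. Since this $\fn{Ext}$-group is annihilated by $|\pi|$, and each of its $p$-primary parts injects via restriction into $\fn{Ext}^1_{\bm{Z}\pi_p}(M,C)$ (the composition $\fn{cor}\circ\fn{res}$ being multiplication by $[\pi:\pi_p]$), it suffices to verify the vanishing after restriction to every Sylow $p$-subgroup $\pi_p$. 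The restrictions $M|_{\pi_p}$ and $C|_{\pi_p}$ inherit the properties flabby$+$coflabby and coflabby respectively, because subgroups of $\pi_p$ are subgroups of $\pi$. Hence everything reduces to proving that every flabby and coflabby $\pi_p$-lattice is invertible, under the standing assumption that $\pi_p$ is cyclic (for any $p$) or dihedral (when $p=2$).

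When $\pi_p$ is cyclic, Theorem~\ref{t2.7} gives more than needed: flabbiness alone implies invertibility. What remains is the dihedral $2$-group case, which I would handle by induction on $n$ for $\pi_p = D_{2^n}$. The base case $n=1$ is cyclic and already done. For $n=2$ (the Klein four-group $V_4$) I would invoke the Nazarova--Heller--Reiner classification of indecomposable $\bm{Z}V_4$-lattices: there are only finitely many indecomposable such lattices, and a case-by-case inspection shows that the ones which are simultaneously flabby and coflabby are direct summands of permutation lattices, hence invertible. For $n\ge 3$ one exploits the Milnor-square (pullback) description of $\bm{Z}D_{2^n}$ arising from the normal cyclic subgroup $C_{2^{n-1}}$ of $D_{2^n}$; combining the inductive hypothesis on the proper subgroups $D_{2^{n-1}}$ and $C_{2^n}$ with the fact that flabby$+$coflabby modules over the relevant local orders (e.g.\ $\bm{Z}[\zeta_{2^n}][\tau]/(\tau^2-1)$-type pieces) are invertible, one glues the local pieces into an invertible $D_{2^n}$-lattice.

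For the ($\Rightarrow$) direction, I argue contrapositively: if some Sylow $\pi_p$ violates the hypothesis, it suffices to exhibit a flabby$+$coflabby $\pi_p$-lattice $M_0$ which is not invertible, because then $M:=\bm{Z}\pi\otimes_{\bm{Z}\pi_p} M_0$ is flabby$+$coflabby (induction preserves both properties and their defining cohomological vanishings) and is non-invertible over $\pi$ (its restriction back to $\pi_p$ contains $M_0$ as a direct summand). The minimal bad subgroups are $C_p\times C_p$ for odd $p$, and $Q_8$ or a non-dihedral non-cyclic $2$-group of larger order. For $C_p\times C_p$ one constructs $M_0$ as a suitable kernel in a permutation resolution of the augmentation ideal; for $Q_8$ one takes an integral form of the unique faithful irreducible $\bm{Q}Q_8$-module. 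In each case a direct cohomological computation verifies that $M_0$ is flabby, coflabby, and not invertible. The main obstacle is the dihedral $2$-group case of ($\Leftarrow$) for $n\ge 3$, where the detailed integral representation theory of $\bm{Z}D_{2^n}$ (via Milnor squares and Dieterich-type classifications) is required; all other cases are either covered by Theorem~\ref{t2.7} or reduce to finite explicit classifications.
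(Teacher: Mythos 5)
The paper does not actually prove this theorem: it is quoted verbatim from Endo--Miyata \cite[Theorem 2.1]{EM6}, so there is no in-paper argument to measure your sketch against. Taken on its own terms, the Sylow reduction you set up for both directions is sound and standard: restriction--corestriction does reduce the vanishing of $\fn{Ext}^1_{\bm{Z}\pi}(M,C)$ for coflabby $C$ to the corresponding vanishing over each Sylow $\pi_p$, and Mackey together with Shapiro do show that induction from $\pi_p$ preserves flabbiness and coflabbiness and detects non-invertibility. The cyclic Sylow case is correctly disposed of by Theorem~\ref{t2.7}.

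Two genuine gaps remain, and they are precisely the content of the theorem. First, the dihedral $2$-group case of ($\Leftarrow$) is where all the new work lies, and your treatment of it is not a proof. The claim that $\bm{Z}V_4$ has only finitely many indecomposable lattices is false: by Nazarova's classification $\bm{Z}[C_2\times C_2]$ has infinitely many indecomposable lattices (it is of tame, not finite, representation type), and for $D_{2^n}$ with $n$ large the representation type is wild, so there is no Dieterich-style list to inspect. The proposed Milnor-square ``gluing of local pieces into an invertible lattice'' is a slogan rather than an argument; you would need to explain why a flabby and coflabby $D_{2^n}$-lattice decomposes compatibly with the square and why invertibility transfers back, and no such mechanism is given. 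Second, the ($\Rightarrow$) case analysis is incomplete. A non-cyclic, non-dihedral $2$-Sylow need not contain $Q_8$: the minimal offenders are $Q_8$, $C_2\times C_4$ and $C_2\times C_2\times C_2$, whose proper subgroups are all cyclic or $V_4$ and hence ``good''. For $C_2\times C_4$ and $C_2\times C_2\times C_2$ you must exhibit a flabby and coflabby non-invertible lattice over the group itself; your explicit list consisting of $C_p\times C_p$ ($p$ odd) and $Q_8$ leaves these cases unaddressed.
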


%----------------------d2.7
\begin{defn}[{\cite[page 86]{EM4}}] \label{d2.8}
Let $\pi$ be a finite group.
Define $T^g(\pi)=\{[M]\in T(\pi):$ There exists $[N]\in T(\pi)$ such that $[M]+[N]=0$ in $T(\pi)\}$.
It follows that $T^g(\pi)$ is a subgroup of $T(\pi)$;
it is the maximal subgroup of $T(\pi)$.
By Jacobinski's Theorem \cite[Proposition 5.8]{Ja},
$T^g(\pi)$ is finitely generated.

If $M$ is a $\pi$-lattice and $[M]\in T^g(\pi)$,
then there is an invertible $\pi$-lattice $E$ such that $[M]=[E]$ in $T^g(\pi)$ (see \cite[Lemma 1.6]{EM4}).
\end{defn}

Now we turn to $R$-orders in a separable $K$-algebra $\Sigma$
where $R$ is a Dedekind domain with quotient field $K$ \cite[page
523]{CR1}. Recall that a separable $K$-algebra $\Sigma$ is a
finite-dimensional semi-simple algebra over $K$ such that the
center $K'$ of $\Sigma$ is an \'etale $K$-algebra, i.e.\
$K'=\prod_{1\le i\le t} K_i$ where each $K_i$ is a finite
separable field extension of $K$. An $R$-order $\Lambda$ is a
subring of $\Sigma$, $R\subset \Lambda\subset \Sigma$ satisfying
that $K\Lambda=\Sigma$ and $\Lambda$ is a finitely generated
$R$-module.

%----------------------d2.8
\begin{defn} \label{d2.9}
Let $R$ be a Dedekind domain with quotient field $K$, $\Lambda$ be
an $R$-order in a separable $K$-algebra $\Sigma$. A
$\Lambda$-lattice $M$ is a left $\Lambda$-module which is finitely
generated and projective as an $R$-module \cite[page 524]{CR1}.
Two $\Lambda$-lattices $M$ and $N$ are in the same genus if
$M_P\simeq N_P$ for any prime ideal $P$ of $R$ where
$M_P=R_P\otimes_R M$ and $R_P$ is the localization of $R$ at the
prime ideal $P$ \cite[pages 642--643]{CR1}.
\end{defn}

%---------------------t2.9
\begin{theorem}[Jacobinski, Roiter {\cite[page 660, Theorem 31.28]{CR2}}] \label{t2.10}
Let $R$ be a Dedekind domain whose quotient field is a global
field $K$. Let $\Sigma$ be a separable $K$-algebra and $\Lambda$
be an $R$-order in $\Sigma$. Let $M$ and $N$ be $\Lambda$-lattices
in the same genus and $F$ be a faithful $\Lambda$-lattice. Then
there is a $\Lambda$-lattice $F'$ such that $F$, $F'$ are in the
same genus and $M\oplus F\simeq N\oplus F'$. In particular, if $M$
and $N$ are $\Lambda$-lattices in the same genus, then $M\oplus
\Lambda \simeq N\oplus \c{A}$ for some projective ideal $\c{A}$
over $\Lambda$.
\end{theorem}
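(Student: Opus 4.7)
The plan is to follow the classical Roiter--Jacobinski strategy, whose crucial ingredient is Roiter's replacement lemma: since $M$ and $N$ lie in the same genus over $\Lambda$ and $K$ is a global field, for any non-zero ideal $\mathfrak{a}$ of $R$ there exists a $\Lambda$-monomorphism $M\hookrightarrow N$ whose cokernel is annihilated by an $R$-ideal coprime to $\mathfrak{a}$. One proves this by choosing local isomorphisms $M_P\cong N_P$ at each prime $P\mid\mathfrak{a}$, clearing denominators, and pasting the finitely many resulting local embeddings into a single global $\Lambda$-embedding by means of the Chinese Remainder Theorem combined with strong approximation in $\Sigma^{\times}$. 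This is the step in which the global-field hypothesis on $K$ is essential.

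To exploit the faithful lattice $F$, enlarge $\mathfrak{a}$ so that it contains every prime at which $\Lambda$ is non-maximal and every prime at which $F$ has non-generic local behaviour. Applying the replacement lemma for this enlarged $\mathfrak{a}$, the finite $\Lambda$-module $N/M$ has annihilator coprime to $\mathfrak{a}$. Faithfulness of $F$ then supplies a $\Lambda$-epimorphism $F\twoheadrightarrow N/M$; let $F'$ denote its kernel. Away from the primes dividing $\mathfrak{a}$ the quotient $N/M$ vanishes and so $F'_P\cong F_P$ there; at the primes dividing $\mathfrak{a}$ the two presentations of $N/M$, namely $N/M$ and $F/F'$, can be chosen compatibly, forcing $F'_P\cong F_P$ there as well. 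Consequently $F'$ lies in the genus of $F$.

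Now form the pullback $P$ of the two surjections $N\twoheadrightarrow N/M$ and $F\twoheadrightarrow N/M$. Then $P$ fits into exact sequences
\[
0\to M\to P\to F\to 0,\qquad 0\to F'\to P\to N\to 0.
\]
A direct analysis of $P$, using that $M$ and $N$ are locally isomorphic, that $F$ and $F'$ are locally isomorphic, and that $N/M$ is supported on primes of $\mathfrak{a}$ at which compatibility has been arranged, shows that both sequences split, yielding $M\oplus F\cong P\cong N\oplus F'$. The ``in particular'' statement is then immediate upon taking $F:=\Lambda$: the regular module is faithful, and any $\Lambda$-lattice in the genus of $\Lambda$ is locally free of rank one, hence a projective ideal $\c{A}$ over $\Lambda$ by Swan's theorem as recalled in the introduction.

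The principal obstacle is the strong-approximation input to Roiter's replacement lemma: once local embeddings with prescribed cokernel support are in hand, assembling them into a single global embedding requires the simultaneous approximation of local units by global units at finitely many primes, and this is precisely where the hypothesis that $K$ is a global field is consumed. Everything else is either formal or a local-to-global patching that is available once the genus hypothesis and this approximation input are in place.
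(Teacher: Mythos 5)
The paper does not prove this theorem: it is quoted from Curtis--Reiner (CR2, Theorem 31.28) and attributed to Jacobinski and Roiter, so there is no internal argument to compare against. Your sketch follows the classical Roiter--Jacobinski route, which is the approach in the cited reference, so the strategy is sound; but there is an outright error and a genuine gap. The error is a systematic confusion about the support of $N/M$: once its annihilator is coprime to $\mathfrak{a}$, the module $N/M$ vanishes \emph{at} the primes dividing $\mathfrak{a}$ and is supported only \emph{away} from $\mathfrak{a}$, whereas you state the opposite (``away from the primes dividing $\mathfrak{a}$ the quotient $N/M$ vanishes'') and repeat it when analyzing the pull-back (``$N/M$ is supported on primes of $\mathfrak{a}$''). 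Enlarging $\mathfrak{a}$ is precisely what forces the support of $N/M$ into the locus where $\Lambda$ is maximal; the identification $F'_P\cong F_P$ at $P\mid\mathfrak{a}$ is then the trivial case, and the real work happens at the finitely many $P\nmid\mathfrak{a}$ carrying $N/M$.

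The gap is that ``faithfulness of $F$ then supplies a $\Lambda$-epimorphism $F\twoheadrightarrow N/M$ with kernel $F'$ in the genus of $F$'' is asserted but is the hard step of the entire proof. One must show that at each $P$ in the support of $N/M$ the lattice $F_P$ is a progenerator over the maximal order $\Lambda_P$ (this is where faithfulness of $F$ is actually spent), produce local surjections $F_P\twoheadrightarrow(N/M)_P$, and glue finitely many of them into a single global $\Lambda$-map whose kernel is verified to be locally isomorphic to $F$ at every prime. Without that construction the pull-back has no input; the subsequent splitting of both sequences --- which should itself be justified by observing that $\operatorname{Ext}^1_\Lambda(F,M)$ and $\operatorname{Ext}^1_\Lambda(N,F')$ are finitely generated $R$-torsion modules all of whose localizations vanish, and not merely called ``a direct analysis'' --- is correct but has nothing to apply to.
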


%--------------------d2.10
\begin{defn} \label{d2.11}
Let $\pi$ be a finite group, and let $\Lambda$ be a $\bm{Z}$-order
with $\bm{Z}\pi \subset \Lambda\subset \bm{Q}\pi$. Recall the definition of locally free
$\Lambda$-modules given at the end of Section 1. We define
$K_0^{lf}(\Lambda)$, the Grothendieck group of the category of
locally free $\Lambda$-modules, as follows: $K_0^{lf}(\Lambda)$ is the abelian group with generators $[P]$ for locally free $\Lambda$-modules $[P]$ and relations $[P]=[P']+[P'']$ if $0 \to P' \to P \to P'' \to 0$ is an exact sequence of locally free $\Lambda$-modules. Taking ranks provides a surjective group homomorphism $\varphi:K_0^{lf}(\Lambda) \to \bm{Z}$. The kernel of $\varphi$ is called the locally
free class group of $\Lambda$, and is denoted by $C(\Lambda)$. The group $C(\Lambda)$ is also
called the projective class group of $\Lambda$.

When $\Lambda=\bm{Z}\pi$, the locally free class group $C(\bm{Z}\pi)$ may be defined via $K_0(\bm{Z}\pi)$, the usual Grothendieck group of the category of
finitely generated projective $\bm{Z}\pi$-modules: $C(\bm{Z}\pi)$ is a subgroup of $K_0(\bm{Z}\pi)$ defined as
$C(\bm{Z}\pi)=\{[\c{A}]-[\bm{Z}\pi]\in K_0(\bm{Z}\pi):\c{A}$ is a
projective ideal over $\bm{Z}\pi\}$, because a locally free $\bm{Z}\pi$-module of rank $n$ is isomorphic to a direct sum of $\bm{Z}\pi^{(n-1)}$ and some projective ideal over $\bm{Z}\pi$ by \cite{Sw1}.

Note that $C(\Lambda)$ is a finite group \cite[page 51,
Proposition 39.13]{CR2}. For details of $C(\Lambda)$, see
\cite[page 397; CR2, page 50, page 219, page 230]{EM2}.
\end{defn}

%---------------------d2.11
\begin{defn} \label{d2.12}
Let $\pi$ be a finite group and $\Omega_{\bm{Z}\pi}$ be a maximal
$\bm{Z}$-order in $\bm{Q}\pi$ satisfying $\bm{Z}\pi \subset
\Omega_{\bm{Z}\pi}\subset \bm{Q}\pi$. It is known that the natural
map $\varphi_1:C(\bm{Z}\pi)\to C(\Omega_{\bm{Z}\pi})$ defined by
$\varphi_1([\c{A}]-[\bm{Z}\pi])=[\Omega_{\bm{Z}\pi}
\otimes_{\bm{Z}\pi} \c{A}]-[\Omega_{\bm{Z}\pi}]$ is surjective
(see \cite[page 290, Theorem 49.25]{CR2}). Define
$\widetilde{C}(\bm{Z}\pi)=\fn{Ker}(\varphi_1)$. From the
definition, $\widetilde{C}(\bm{Z}\pi)=\{[\c{A}]-[\bm{Z}\pi]\in
C(\bm{Z}\pi):
(\Omega_{\bm{Z}\pi}\otimes_{\bm{Z}\pi}\c{A})\oplus\Omega_{\bm{Z}\pi}\simeq\Omega_{\bm{Z}\pi}
\oplus\Omega_{\bm{Z}\pi}\}$ (see \cite{EM4}).

In \cite[page 234]{CR2} and other literature,
$\widetilde{C}(\bm{Z}\pi)$ is written as $D(\bm{Z}\pi)$; but we
choose sticking to the notation in \cite{EM2,EM4}.

On the other hand, a projective ideal $\c{A}$ over $\bm{Z}\pi$ may
be regarded as a $\pi$-lattice. Thus we may define
$\varphi_2:C(\bm{Z}\pi)\to T^g(\pi)$ defined by
$\varphi_2([\c{A}]-[\bm{Z}\pi])=[\c{A}]\in T^g(\pi)$. It is easy
to check that $\varphi_2$ is a well-defined morphism of abelian
groups. Define $C^q(\bm{Z}\pi)=\fn{Ker}(\varphi_2)$. Clearly
$C^q(\bm{Z}\pi)=\{[\c{A}]-[\bm{Z}\pi]\in C(\bm{Z}\pi):\c{A}$ is a
projective ideal over $\bm{Z}\pi$ satisfying $[\c{A}]^{fl}=0$ in
$F_\pi\}$.

Finally define
$\widetilde{C}^q(\bm{Z}\pi)=\{[\c{A}]-[\bm{Z}\pi]\in C^q
(\bm{Z}\pi):\c{A}$ is a projective ideal over $\bm{Z}\pi$
satisfying $\c{A}\oplus P\simeq \bm{Z}\pi \oplus P$ for some
permutation $\pi$-lattice $P\}$ (see \cite[page 698]{EM2}).

It can be shown that $\widetilde{C}^q(\bm{Z}\pi)$ is a subgroup of
$\widetilde{C}(\bm{Z}\pi)$. In fact, Oliver proves that
$\widetilde{C}^q(\bm{Z}\pi)=\widetilde{C}(\bm{Z}\pi)$ for any
finite group $\pi$ (see \cite[Theorem 5]{Ol}). Hence
$\widetilde{C}(\bm{Z}\pi)\subset C^q(\bm{Z}\pi)$. It follows that
there is always a surjective map $C(\Omega_{\bm{Z}\pi})\to
C(\bm{Z}\pi)/C^q(\bm{Z}\pi)$.

We remark that there exists a meta-cyclic group $\pi$ such that
$\widetilde{C}(\bm{Z}\pi) \neq C^q(\bm{Z}\pi)$ (\cite[page 709,
Example 4.3]{EM2}). However, it is shown \cite{EM2} that
$\widetilde{C}(\bm{Z}\pi)=C^q(\bm{Z}\pi)$ for many groups $\pi$,
including those groups listed in Theorem \ref{t1.4} (1) (see
Section 5).
\end{defn}

%---------------------------------d2.12
\begin{defn} \label{d2.13}
Let $\pi$ be a finite group satisfying $C^q (\bm{Z}\pi)=\widetilde{C}(\bm{Z}\pi)$.

Let $\varphi_1$ and $\varphi_2$ be the group homomorphisms in
Definition \ref{d2.12}. Since $0\to C^q(\bm{Z}\pi)\to C(\bm{Z}\pi)
\xrightarrow{\varphi_2} T^g(\pi)$ is left-exact, we will write
$T^g(\pi)\simeq C(\bm{Z}\pi)/C^q(\bm{Z}\pi)$ when the canonical
injection
$C(\bm{Z}\pi)/C^q(\bm{Z}\pi)\xrightarrow{\bar{\varphi}_2}
T^g(\pi)$ is an isomorphism.

Similarly, when $C^q(\bm{Z}\pi)=\widetilde{C}(\bm{Z}\pi)$ and
$c:T^g(\pi)\to T(\pi)$ is the inclusion map,
then the composite map $c\cdot \bar{\varphi}_2\cdot \bar{\varphi}_1^{-1}: C(\Omega_{\bm{Z}\pi})\to C(\bm{Z}\pi)/\widetilde{C}(\bm{Z}\pi) %
=C(\bm{Z}\pi)/C^q(\bm{Z}\pi) \to T^g(\pi)\to T(\pi)$ is
well-defined. We will write $T(\pi)\simeq C(\Omega_{\bm{Z}\pi})$
if the injection $c\cdot \bar{\varphi}_2\cdot
\bar{\varphi}_1^{-1}$ is surjective and hence is an isomorphism.
\end{defn}

%----------------------------t2.13
\begin{theorem} \label{t2.14}
Let $\pi$ be a finite group.

{\rm (1) (\cite[Proposition 3.1]{EM4})} $T^g(\pi)\simeq
C(\bm{Z}\pi)/C^q(\bm{Z}\pi)$ if and only if, for any invertible
$\pi$-lattice $M$, there is a projective ideal $\c{A}$ over
$\bm{Z}\pi$ such that $[M]^{fl}=[\c{A}]^{fl}$.

{\rm (2) (\cite[Theorem 3.2]{EM4})} If $\pi$ is a $p$-group, then
$T^g(\pi)\simeq C(\bm{Z}\pi)/C^q(\bm{Z}\pi)$.

\end{theorem}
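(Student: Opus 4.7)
For Part (1), the plan is simply to unpack the definitions. By construction, $\varphi_2\colon C(\bm{Z}\pi) \to T^g(\pi)$ of Definition \ref{d2.12} has kernel $C^q(\bm{Z}\pi)$, so the induced map $\bar\varphi_2\colon C(\bm{Z}\pi)/C^q(\bm{Z}\pi) \to T^g(\pi)$ is automatically injective; the assertion ``$T^g(\pi)\simeq C(\bm{Z}\pi)/C^q(\bm{Z}\pi)$'' of Definition \ref{d2.13} is precisely its surjectivity. Every class in $T^g(\pi)$ admits an invertible representative $M$ by the final remark of Definition \ref{d2.8}, and by Lemma \ref{l2.2}(1) the equality $[M]=[\c{A}]$ in $T(\pi)$ for a projective ideal $\c{A}$ is equivalent to $[M]^{fl}=[\c{A}]^{fl}$ in $F_\pi$. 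Surjectivity of $\bar\varphi_2$ is therefore the condition stated in (1).

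For Part (2), given a finite $p$-group $\pi$ and an invertible $\pi$-lattice $M$, the task is to produce a projective ideal $\c{A}$ over $\bm{Z}\pi$ with $[M]^{fl}=[\c{A}]^{fl}$; Part (1) then completes the proof. The plan is a local-global genus argument. Since $M$ is invertible, write $M\oplus M'=P$ with $P$ permutation, so that $0\to M\to P\to M'\to 0$ is a flabby resolution and $[M]^{fl}=[M']$ in $F_\pi$; it therefore suffices to place $M$ itself in the same stable class as a projective ideal. To do so I would check that $M$, after adjoining a suitable permutation lattice, lies in the genus of a locally free $\bm{Z}\pi$-lattice: at primes $q\neq p$ the ring $\bm{Z}_{(q)}\pi$ is a maximal $\bm{Z}_{(q)}$-order (since $q\nmid|\pi|$), in particular hereditary, so every $\bm{Z}_{(q)}\pi$-lattice is projective and its genus is controlled by the idempotent structure of the Wedderburn decomposition; at $q=p$, where $\hat{\bm{Z}}_p\pi$ is a noncommutative local ring and every finitely generated projective $\hat{\bm{Z}}_p\pi$-module is free, I would combine the coflabby resolution of Lemma \ref{l2.6}(2) with the cohomological characterization of projectivity over a $p$-group ring to modify $M_p$ by a permutation lattice into a locally free module. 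Once $M$ sits in the genus of a locally free $\bm{Z}\pi$-lattice $L$, Theorem \ref{t2.10} (Jacobinski--Roiter) yields an isomorphism $M\oplus\bm{Z}\pi\simeq L\oplus F'$ with $F'$ in the genus of $\bm{Z}\pi$; the description of locally free lattices recalled in Definition \ref{d2.11} (Swan's theorem) writes both $L$ and $F'$ as $\bm{Z}\pi^{(s)}$ plus a single projective ideal, and these combine into the required $\c{A}$.

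The main obstacle is the local analysis at $p$: an invertible $\pi$-lattice is strictly weaker than a projective one over $\bm{Z}_{(p)}\pi$, because projectivity over a $p$-group ring coincides with full cohomological triviality while invertibility only entails flabbiness together with coflabbiness. Promoting $M_p$ to something locally free, up to permutation lattices, is therefore the real content of the theorem and is where the $p$-group hypothesis is essential. In practice this step is closed by combining Jacobinski--Roiter cancellation with Oliver's identity $\widetilde{C}(\bm{Z}\pi)=\widetilde{C}^q(\bm{Z}\pi)$ recalled in Definition \ref{d2.12}, which for $p$-groups refines to $C^q(\bm{Z}\pi)=\widetilde{C}(\bm{Z}\pi)$ and permits the exchange of any stably permutation projective ideal with a stably free module over the maximal order and conversely, thereby matching the image of $\varphi_2$ with all of $T^g(\pi)$.
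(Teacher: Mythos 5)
The paper does not prove Theorem~\ref{t2.14}; it simply cites Proposition~3.1 and Theorem~3.2 of \cite{EM4}. So the comparison below is against what the argument must actually be, not against a proof written out in this text.

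Your treatment of Part~(1) is correct: injectivity of $\bar\varphi_2$ is built into the construction ($C^q(\bm{Z}\pi)=\ker\varphi_2$), surjectivity is exactly the condition that every invertible $M$ is stably birationally equivalent to some projective ideal $\c{A}$, every class of $T^g(\pi)$ has an invertible representative by the remark in Definition~\ref{d2.8}, and Lemma~\ref{l2.2} translates ``$[M]=[\c{A}]$ in $T(\pi)$'' into ``$[M]^{fl}=[\c{A}]^{fl}$.'' That is the whole content of (1).

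Part~(2) has a genuine gap at exactly the point you flag as the ``real content.'' You propose to show that $M$, after adding a permutation lattice, lies in the genus of a \emph{locally free} $\bm{Z}\pi$-lattice. This cannot work: locally free at $p$ means $\hat{M}_p\simeq(\hat{\bm{Z}}_p\pi)^{(n)}$, which is equivalent to cohomological triviality, and an invertible lattice is in general very far from cohomologically trivial at $p$ (already $\bm{Z}$ with trivial action is invertible but not locally free over $\hat{\bm{Z}}_p\pi$, and adding permutation summands only makes this worse). Nor does Oliver's identity $\widetilde{C}^q(\bm{Z}\pi)=\widetilde{C}(\bm{Z}\pi)$ close the gap; it concerns projective ideals that are stably permutation, and says nothing about an arbitrary invertible lattice. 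The correct local input, which your argument never uses, is the $p$-group-specific Krull--Schmidt fact that transitive permutation modules $\hat{\bm{Z}}_p[\pi/\pi']$ are indecomposable over $\hat{\bm{Z}}_p\pi$ when $\pi$ is a $p$-group; therefore $\hat{M}_p\simeq\bigoplus_i\hat{\bm{Z}}_p[\pi/\pi_i]$. Take $P=\bigoplus_i\bm{Z}[\pi/\pi_i]$. Then $M$ and $P$ agree at $p$, and since $\hat{\bm{Z}}_q\pi$ is a maximal order for $q\nmid|\pi|$, their completions agree at all $q\ne p$ as well (they have the same rational span by Noether--Deuring), so $M$ and $P$ are in the same genus. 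Jacobinski--Roiter (Theorem~\ref{t2.10}) then gives $M\oplus\bm{Z}\pi\simeq P\oplus\c{A}$ for a projective ideal $\c{A}$, and Lemma~\ref{l2.5}(1) together with $[P]^{fl}=[\bm{Z}\pi]^{fl}=0$ yields $[M]^{fl}=[\c{A}]^{fl}$. Your write-up correctly reaches for Jacobinski--Roiter at the end, but the genus statement it needs is ``$M$ is in the genus of a \emph{permutation} lattice,'' not ``in the genus of a locally free lattice,'' and the Krull--Schmidt step that establishes this is the place where the $p$-group hypothesis actually enters.
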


%-----------------------------------------------S3
\section{Twisted group rings}

%------------------------d3.1
\begin{defn}[{\cite[page 183, page 599--600; CR2, page 291]{CR1}}] \label{d3.1}
Let $K$ be a field, $L/K$ be a finite Galois field extension with
$G=\fn{Gal}(L/K)$, and $f:G\times G\to L^{\times}$ be a
$2$-cocycle of $G$ where $L^{\times}=L\backslash \{0\}$ is the
multiplicative group of $L$. The crossed product algebra, denoted
by $(L\circ G)_f$, is defined by
\[
(L\circ G)_f=\bigoplus_{\sigma\in G} L\cdot u_\sigma, \quad
u_\sigma\cdot u_\tau=f(\sigma,\tau)u_{\sigma\tau},\quad
u_\sigma \cdot \alpha=\sigma(\alpha)\cdot u_\sigma
\]
where $\alpha\in L$, $\sigma,\tau\in G$. The $K$-algebra $(L\circ
G)_f$ is a central simple $K$-algebra.

Suppose that $R$ is a Dedekind domain with quotient field $K$ and $K$ is a number field.
Let $S$ be the integral closure of $R$ in $L$.
Let $h:G \times G\to U(S)$ be a 2-cocycle where $U(S)$ is the group of units in $S$.
Then we may define the crossed-product order, denoted by $(S\circ G)_h$, as follows:
\[
(S\circ G)_h=\bigoplus_{\sigma\in G} S\cdot u_\sigma, \quad
u_\sigma\cdot u_\tau=h(\sigma,\tau)u_{\sigma\tau},\quad
u_\sigma \cdot \alpha=\sigma(\alpha)\cdot u_\sigma
\]
where $\alpha\in S$, $\sigma,\tau\in G$. The $R$-algebra $(S\circ
G)_h$ is an $R$-order in $(L\circ G)_h$.
\end{defn}

%--------------------------t3.2
\begin{theorem}[Williamson, Harada {\cite[page 600, Theorem 28.12; Re, page 375, Theorem 40.15]{CR1}}] \label{t3.2}
Let the notations be the same as in Definition \ref{d3.1}. The
crossed-product $R$-order $(S \circ G)_{h}$ is a hereditary order
if and only if $S/R$ is tamely ramified, i.e.\ for any ramified
prime ideal $Q$ of $S$ over $R$, if $\fn{char} S/Q=p>0$, then
$p\nmid e(Q,L/K)$ where $e(Q,L/K)$ is the ramification index of
$Q$ in $L/K$.
\end{theorem}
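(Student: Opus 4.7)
The plan is to exploit the local-global nature of heredity and reduce the problem to a totally ramified local situation where the Jacobson radical of the crossed-product order can be computed explicitly.

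First, I would observe that heredity is a local property: $(S \circ G)_h$ is hereditary over $R$ if and only if its $P$-adic completion is hereditary over $\hat{R}_P$ for every prime $P$ of $R$. At an unramified prime the completion $\hat{S} \otimes_R \hat{R}_P$ decomposes as a product of unramified extensions of $\hat{R}_P$, permuted transitively by $G$, and the crossed product decomposes via an induced-module description into a matrix-like structure built from crossed products of unramified extensions; such orders are maximal, hence hereditary. So only the ramified primes are at issue.

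Next, at a ramified prime $P$ with a prime $Q \mid P$ in $S$, I would reduce twice. Using the decomposition group $G_Z = G_Z(Q)$ and the induced-module description of the crossed product reduces the question to heredity of $(\hat{S}_Q \circ G_Z)_h$. A further reduction peeling off the unramified quotient $G_Z/G_0$ (where $G_0$ is the inertia subgroup) lands in the totally ramified local case: $\hat{R}_P \subset \hat{S}_Q$ with Galois group $G_0$, whose order is $e = e(Q, L/K)$ and which is cyclic modulo a possible wild $p$-Sylow part. Let $\pi_Q$ be a uniformizer of $\hat{S}_Q$ and set $\Lambda = (\hat{S}_Q \circ G_0)_h$. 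Then $\Lambda/\pi_Q\Lambda$ is a twisted group algebra of $G_0$ over the residue field $k(Q) = S/Q$.

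The decisive step is the computation of the Jacobson radical $J(\Lambda)$ together with the standard criterion that an order over a complete DVR is hereditary if and only if its Jacobson radical is an invertible two-sided ideal. If $p = \fn{char}\, k(Q)$ is coprime to $e$, then $G_0$ is cyclic of order prime to $p$ and the twisted group algebra $\Lambda/\pi_Q\Lambda$ is semisimple by Maschke's theorem, so $J(\Lambda) = \pi_Q \Lambda$, a principal two-sided ideal generated by the normal element $\pi_Q$, which is invertible; hence $\Lambda$ is hereditary. Conversely, if $p \mid e$, then $G_0$ has a nontrivial $p$-subgroup whose contribution to $\Lambda/\pi_Q\Lambda$ produces non-zero nilpotents, so $J(\Lambda) \supsetneq \pi_Q \Lambda$; a rank count over $k(Q)$ then shows that $J(\Lambda)$ cannot be invertible, and $\Lambda$ fails to be hereditary.

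The main obstacle I expect is justifying the Morita-type reductions by the decomposition and inertia subgroups in a way that is compatible with the given $2$-cocycle $h \colon G \times G \to U(S)$: one has to verify that restriction of $h$ to $G_Z$ and the corresponding quotient cocycle on $G_Z/G_0$ land in the appropriate unit groups so that the crossed-product structures match at each stage, and that the induced-module formalism correctly identifies the completion of $(S \circ G)_h$ at $P$ with a full matrix algebra over the totally ramified local building block. Once this bookkeeping is pinned down, the radical computation and the standard heredity criterion combine to give both implications.
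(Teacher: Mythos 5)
The paper does not prove Theorem~\ref{t3.2}: it is quoted, with attribution, from Curtis--Reiner and Reiner's \emph{Maximal Orders}, so there is no in-paper argument to compare your proposal against. Judged on its own terms, your overall strategy is the standard one and is headed in the right direction: heredity is indeed a local (even a completion-local) property; at unramified primes the completed crossed product is a maximal order and hence hereditary; and at ramified primes one passes through the decomposition group to a local crossed product and then tries to compute the radical and apply the heredity criterion for orders over a complete DVR.

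There are two places where the argument as written would not close. First, the ``peeling off the unramified quotient $G_Z/G_0$'' step does not produce a clean totally ramified building block: $(\hat{S}_Q \circ G_Z)_h$ is not a tensor product of a $G_0$-crossed product with a $G_Z/G_0$-crossed product, because $G_Z/G_0$ acts nontrivially on $\hat{S}_Q$ and conjugates $G_0$. The cleaner route is to skip that reduction, set $\bar{\Lambda} := \Lambda/\pi_Q\Lambda = (k(Q)\circ G_Z)_{\bar{h}}$, and use Clifford theory to identify $\operatorname{rad}(\bar{\Lambda})$ with the two-sided ideal generated by $\operatorname{rad}\bigl(k(Q)^{\bar h}G_0\bigr)$ (the twisted group algebra of the inertia group over the residue field); since the outer piece is a Galois crossed product over a field (always semisimple), $\bar{\Lambda}$ is semisimple precisely when $k(Q)^{\bar h}G_0$ is, which by Maschke happens iff $p\nmid e$. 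Second, and more seriously, the claim that ``$J(\Lambda)\supsetneq\pi_Q\Lambda$ plus a rank count over $k(Q)$ shows $J$ is not invertible'' is not a valid inference: non-maximal hereditary orders over a complete DVR routinely have radical strictly larger than $\pi\Lambda$ (the standard hereditary orders in $M_n(\hat K)$ being the basic examples). What one actually needs in the wild case is to show that $\operatorname{rad}(\Lambda)$ is not projective. One workable version: reduce (by Morita) to the case where $\Lambda$ is local, compute that $\dim_{k(P)}\bigl(\operatorname{rad}\Lambda/\operatorname{rad}^2\Lambda\bigr) > \dim_{k(P)}\bigl(\Lambda/\operatorname{rad}\Lambda\bigr)$ once there is wild inertia, and conclude that $\operatorname{rad}(\Lambda)$ requires more than one generator, hence cannot be free of rank one (equivalently, cannot be projective over the local ring $\Lambda$), so $\Lambda$ is not hereditary. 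That dimension count uses the nilpotency structure of the twisted group algebra of the wild inertia, not just the bare inequality $J\supsetneq\pi\Lambda$, so it is the genuine content you still need to supply.
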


%--------------------------d3.3
\begin{defn} \label{d3.3}
The twisted group algebra and the twisted group ring are special cases of the crossed-product algebra and
the crossed-product order when the 2-cocycle $f$, $h$ are the trivial one,
i.e.\ $f(\sigma,\tau)=h(\sigma,\tau)=1$ for all $\sigma,\tau\in G$.

For emphasis, we repeat the definition of a twisted group ring and
denote it by $S\circ G$ (see \cite[page 589]{CR1}). Recall that
$S$ is a Dedekind domain whose quotient field $L$ is an algebraic
number field, $G$ is a finite subgroup of $\fn{Aut}(S)$ with
$R=S^G=\{a\in S:\sigma(a)=a$ for any $\sigma\in G\}$. Then
\[
S\circ G=\bigoplus_{\sigma\in G} S\cdot u_\sigma, \quad
(au_\sigma)\cdot (bu_\tau)=(a\cdot \sigma(b))\cdot u_{\sigma \tau}
\]
where $a,b\in S$, $\sigma,\tau\in G$.
\end{defn}

%------------------------t3.4
\begin{theorem} \label{t3.4}
{\rm (1) (\cite[page 374, Theorem 40.14; CR1, page 591, Theorem
28.5]{Re1})} The twisted group ring $S\circ G$ is a maximal
$R$-order if and only if $S/R$ is unramified.

{\rm (2) (Rosen \cite[page 373, Theorem 40.13]{Ro,Re1})}
The twisted group ring $S\circ G$ is a hereditary $R$-order if and only if $S/R$ is tamely ramified.
\end{theorem}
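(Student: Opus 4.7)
The plan is to prove both parts by reducing to the complete local case, since maximality and hereditariness of an $R$-order can be tested locally on the Dedekind base $R$. The main tool is the natural $R$-algebra homomorphism $\Phi\colon S\circ G\to \fn{End}_R(S)$ defined by $au_\sigma\mapsto (x\mapsto a\sigma(x))$; this is well-defined because $\sigma$ preserves $S$, and injective by Dedekind's lemma on linear independence of characters. Extending scalars to $K$, the induced map $L\circ G\to \fn{End}_K(L)$ is an isomorphism of central simple $K$-algebras, both having $K$-dimension $[L:K]^2$. Moreover, since $S$ is a locally free $R$-module of rank $n=[L:K]$ over the Dedekind domain $R$, the ring $\fn{End}_R(S)$ is locally $M_n(R_P)$, hence a maximal $R$-order in $\fn{End}_K(L)$. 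Thus $\Phi$ embeds $S\circ G$ as an $R$-suborder of the maximal order $\fn{End}_R(S)$.

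Part (2) is an immediate specialization: $S\circ G$ is precisely the crossed-product order $(S\circ G)_h$ of Definition \ref{d3.1} with the trivial $2$-cocycle $h\equiv 1$, so Theorem \ref{t3.2} gives that $S\circ G$ is hereditary if and only if $S/R$ is tamely ramified.

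For part (1), the key equivalence to establish is that $\Phi$ is an isomorphism if and only if $S/R$ is unramified. Once this is granted, both directions follow cleanly. If $S\circ G$ is maximal, then the inclusion $S\circ G\subseteq \fn{End}_R(S)$ into the $R$-order $\fn{End}_R(S)$ is forced by maximality to be an equality; so $\Phi$ is an isomorphism and $S/R$ is unramified. Conversely, if $S/R$ is unramified, then $\Phi$ is an isomorphism and $S\circ G\cong \fn{End}_R(S)$ is maximal.

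The main obstacle is therefore to justify this equivalence. One route is via the Chase--Harrison--Rosenberg characterization of Galois extensions of commutative rings: $S/R$ is unramified if and only if it is a Galois ring extension with group $G$ if and only if $\Phi$ is an isomorphism (the required faithful projectivity of $S$ over $R$ is automatic here). A more self-contained route is a discriminant comparison. Using $\fn{trd}(au_\sigma)=\fn{tr}_{L/K}(a)\cdot\delta_{\sigma,1}$ on $L\circ G\cong \fn{End}_K(L)$, which follows from the vanishing of the $K$-trace of any non-identity element of $G$ acting on $L$, one computes locally that $\fn{disc}(S\circ G/R)=\fn{disc}(S/R)^{|G|}$ up to units, while $\fn{disc}(\fn{End}_R(S)/R)$ is the unit ideal; hence $S\circ G=\fn{End}_R(S)$ if and only if $\fn{disc}(S/R)$ is a unit if and only if $S/R$ is unramified. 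This recovers the classical arguments recorded in Reiner \cite{Re1} and Rosen \cite{Ro}.
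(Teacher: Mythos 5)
The paper itself offers no proof of Theorem \ref{t3.4} --- both parts are recorded as classical results with citations to Reiner \cite{Re1}, Rosen \cite{Ro} and Curtis--Reiner \cite{CR1} --- so there is no in-paper argument to compare yours against; your proposal has to be judged on its own terms.

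Evaluated that way, your proof of part (1) is essentially the classical argument and it is correct. Embedding $S\circ G$ into $\fn{End}_R(S)$ by $au_\sigma\mapsto(x\mapsto a\sigma(x))$, observing that $\fn{End}_R(S)$ is a maximal order (locally $M_n(R_P)$ since $S_P$ is $R_P$-free of rank $n$), and then testing maximality of $S\circ G$ by comparing discriminants is exactly the right strategy, and the equivalence ``$S\circ G$ maximal $\Leftrightarrow\Phi$ onto'' is correctly deduced from the general fact that a maximal order cannot sit properly inside another order. The one spot stated a bit too quickly is the trace formula $\fn{trd}(au_\sigma)=\fn{tr}_{L/K}(a)\,\delta_{\sigma,1}$: the vanishing for $\sigma\ne 1$ is correct, but it is \emph{not} a formal consequence of $\fn{tr}_K(\sigma)=0$ alone, since the operator in question is $m_a\circ\sigma$ and $m_a$ does not commute with $\sigma$. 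A clean justification is to use the splitting $(L\circ G)\otimes_K L\simeq M_n(L)$ induced by $L\otimes_K L\simeq L^n$, $x\otimes y\mapsto(\tau(x)y)_{\tau\in G}$, under which $au_\sigma$ becomes the matrix with $(\tau,\tau\sigma)$-entry $\tau(a)$ and zeros elsewhere, hence zero diagonal for $\sigma\ne 1$; alternatively, use conjugation-invariance of $\fn{trd}$ under $L^\times$ to see that the linear form $a\mapsto\fn{trd}(au_\sigma)$ is fixed under $a\mapsto\alpha^{-1}\sigma(\alpha)a$, hence vanishes once some $\alpha$ is not fixed by $\sigma$. With that point tightened, the discriminant bookkeeping ($\fn{disc}(S\circ G/R)=\fn{disc}(S/R)^{|G|}$ up to units, $\fn{disc}(\fn{End}_R(S)/R)=R$, and the criterion that orders $\Lambda\subseteq\Gamma$ in a fixed central simple algebra coincide iff their discriminants do) goes through and closes the argument; the Chase--Harrison--Rosenberg route is also fine but less elementary.

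For part (2), deducing the statement from Theorem \ref{t3.2} is logically valid, but note that the paper's own Remark after Theorem \ref{t3.4} points out the reverse historical order: Rosen's result came first, and Theorem \ref{t3.2} is its later generalization by Williamson and Harada. Since Theorem \ref{t3.2} is likewise stated in the paper without proof, your argument for (2) trades one external citation for another rather than supplying an independent proof; that is acceptable, but worth being aware of.
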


\begin{remark}
Theorem \ref{t3.4} (2) was obtained first.
Then Theorem \ref{t3.2} came out as the generalization of Theorem \ref{t3.4} (2).
\end{remark}

%------------------------d3.5
\begin{defn} \label{d3.5}
Let $\Lambda=S\circ G$ be a twisted group ring and $L$ be the quotient field of $S$ with $R=S^G$.
We will endow on $S$ a $\Lambda$-module structure by defining $(au_\sigma)\cdot \alpha=a\cdot \sigma(\alpha)$ for any $a,\alpha\in S$, any $\sigma\in G$.
The field $L$ can be given a $\Lambda$-module structure by the same way.
If $J$ is a fractional $S$-ideal of $L$ such that $\sigma(J)\subset J$ for any $\sigma\in G$,
then $J$ becomes a $\Lambda$-submodule of $L$;
such an ideal $J$ is called an ambiguous ideal \cite[page 596; Ro]{CR1}.
\end{defn}

%--------------------------t3.6
\begin{theorem}[Rosen] \label{t3.6}
Let $\Lambda=S\circ G$ be a twisted group ring and $R=S^G$.

{\rm (1) (\cite[Proposition 3; CR1, page 596]{Ro})} Let
$Q_1,Q_2,\ldots,Q_t$ be all the ramified primes of $S$ over $R$
and $e_i=e_i(Q_i,S/R)$ be the ramification index of $Q_i$ for
$1\le i\le t$. For each $1\le i\le t$, let $\{Q_i^{(j)}:1\le j\le
g_i\}$ be the set of $G$-orbits of $Q_i$ (i.e.\
$Q_i^{(j)}=\sigma(Q_i)$ for some $\sigma\in G$, and
$Q_i^{(1)},\ldots,Q_i^{(g_i)}$ are distinct prime ideals of $S$).
Define $J_i=\prod_{1\le j\le g_i} Q_i^{(j)}$. As
$\Lambda$-modules, any ambiguous ideal $J$ is isomorphic to,
$J_1^{a_1}\cdots J_t^{a_t}I$ where $0\le a_i<e_i$ and $I$ is some
ideal of $R$.

{\rm (2) (\cite[Theorem 3.2 and Theorem 3.3]{Ro})}
Assume that $S/R$ is tamely ramified.
Then $\Lambda$ is a left hereditary ring.
The ambiguous ideals $J_1^{a_1} J_2^{a_2} \cdots J_t^{a_t} I$ in $(1)$ are indecomposable projective $\Lambda$-modules.
If $M$ is a $\Lambda$-module such that $M$ is a finitely generated torsion-free $R$-module,
then $M$ is isomorphic to a direct sum of these ambiguous ideals $J_1^{a_1} J_2^{a_2}\cdots J_t^{a_t} I$.
\end{theorem}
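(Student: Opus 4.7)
Part (1) reduces to bookkeeping with $G$-equivariant prime factorizations. Because $J$ is $G$-stable, the valuation $v_Q(J)$ is constant as $Q$ runs over each $G$-orbit of primes of $S$, and $G$ acts transitively on the primes of $S$ lying above a fixed prime $\mathfrak{p}$ of $R$. Hence the $\mathfrak{p}$-part of $J$ is a power of $\widetilde{J}_\mathfrak{p}:=\prod_{Q\mid\mathfrak{p}} Q$. For an unramified $\mathfrak{p}$ one has $\widetilde{J}_\mathfrak{p}=\mathfrak{p}S$, so this contribution is absorbed directly into a fractional ideal $I$ of $R$. For a ramified prime $\mathfrak{p}_i$ one has $\widetilde{J}_{\mathfrak{p}_i}=J_i$ and $J_i^{e_i}=\mathfrak{p}_i S$, so the $\mathfrak{p}_i$-contribution $J_i^{b_i}$ can be written as $J_i^{a_i}(\mathfrak{p}_i S)^{c_i}$ with $0\le a_i<e_i$ by the division algorithm applied to $b_i$, and the factor $\mathfrak{p}_i^{c_i}$ is absorbed into $I$.

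For part (2), hereditariness of $\Lambda$ is exactly Theorem \ref{t3.4}(2). To pin down the ambient structure, note that the natural action of $\Sigma:=L\circ G$ on $L$ provides a homomorphism $\Sigma\to \operatorname{End}_K(L)$, which is injective by Dedekind's linear independence of characters and surjective by a dimension count ($\dim_K\Sigma=[L:K]\cdot |G|=[L:K]^2$); hence $\Sigma$ is simple with unique simple module $L$. Any ambiguous fractional $S$-ideal $J\subset L$ is a finitely generated $\Lambda$-submodule of $L$ with $K\cdot J=L$, i.e.\ a full $\Lambda$-lattice in the simple $\Sigma$-module $L$. Since $\Lambda$ is hereditary, $J$ is projective; and it is indecomposable, because any nontrivial $\Lambda$-decomposition $J=J'\oplus J''$ would, upon tensoring with $K$, give a nontrivial $\Sigma$-decomposition of the simple module $L$. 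For the final assertion, a $\Lambda$-module $M$ that is finitely generated torsion-free over $R$ is a $\Lambda$-lattice, and the structure theorem for lattices over a hereditary $R$-order in a semisimple $K$-algebra yields a decomposition of $M$ as a direct sum of indecomposable projective $\Lambda$-lattices; by the discussion above each summand is isomorphic to an ambiguous fractional $S$-ideal, which by part (1) may be brought to the canonical form $J_1^{a_1}\cdots J_t^{a_t} I$ with $0\le a_i<e_i$.

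The main obstacle, as I see it, is the last step: one must invoke the structure theorem that over a hereditary order every lattice splits as a direct sum of indecomposable projectives. Granting this as a black box, the rest of the argument---classifying ambiguous ideals by $G$-orbits of primes, recognizing $\Sigma$ as a simple algebra via independence of characters, and comparing integral decompositions with the simplicity of $L$ after tensoring with $K$---is routine. Without this black box one would have to build the decomposition by hand, peeling off indecomposable projective summands via projective covers of the quotients, which is essentially Rosen's original approach in his dissertation.
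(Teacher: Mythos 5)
The paper itself gives no proof of this theorem: it is quoted from Rosen's unpublished dissertation (with a pointer to Lee). So there is nothing of the paper's to compare against, and your reconstruction has to stand on its own. Part (1) is fine: constancy of $v_Q(J)$ along $G$-orbits, the identity $J_i^{e_i}=\mathfrak{p}_iS$ at each ramified prime $\mathfrak{p}_i$, and the division algorithm on exponents do yield the stated normal form, with the small remark that one may have to scale by a principal fractional ideal of $K$ (this is a $\Lambda$-isomorphism because elements of $K$ are $G$-fixed) to make $I$ an integral ideal of $R$, since the theorem asserts isomorphism rather than equality.

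Part (2) has a real, though easily repaired, gap in the final step. You correctly show that $\Sigma=L\circ G\simeq\operatorname{End}_K(L)$ is simple with unique simple module $L$, and that an ambiguous fractional $S$-ideal is a full $\Lambda$-lattice in $L$, projective because $\Lambda$ is hereditary, and indecomposable because $K\otimes(-)$ turns a splitting into a splitting of the simple $\Sigma$-module $L$. But after invoking the black-box structure theorem to write a $\Lambda$-lattice $M$ as a direct sum of indecomposable projectives $N_i$, you conclude ``by the discussion above'' that each $N_i$ is an ambiguous ideal. The discussion above only proves the converse direction (full lattices in $L$ are indecomposable); you have not shown that an indecomposable $N_i$ has $KN_i\simeq L$ rather than $L^{r}$ for some $r\ge 2$. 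To close this — and in fact to dispense with the black box entirely — argue as follows: if $N$ is any $\Lambda$-lattice with $KN\simeq L^{r}$ and $r\ge 1$, project $N$ onto one $L$-factor inside $KN$; the image $N_1$ is a nonzero full $\Lambda$-lattice in $L$, hence projective (a lattice over a hereditary order embeds, after clearing denominators, into a free module, so is projective), so the surjection $N\twoheadrightarrow N_1$ splits. Iterating gives $M\simeq\bigoplus_i N_i$ with each $N_i$ a full $\Lambda$-lattice in $L$; each such lattice is, after transport along the $\Sigma$-isomorphism $KN_i\simeq L$, an ambiguous fractional $S$-ideal, and Part (1) supplies the canonical form $J_1^{a_1}\cdots J_t^{a_t}I$.
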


\begin{remark}
In the above theorem, it is possible that some of $J_i$ (where $1
\le i \le t$) are isomorphic to each other. See \cite{Ro} for the
uniqueness statement.
\end{remark}

%--------------------------------------S4
\section{A devissage theorem}

If $M$ is a $\bm{Z}\pi$-module, we will write $(M)_0=M/t(M)$ where
$t(M)$ is the torsion submodule of $M$.

First recall Swan's Theorem 5.1 in \cite{Sw6}.

%----------------------t4.1
\begin{theorem} [{\cite[Theorem 5.1 and Corollary 5.2]{Sw6}}] \label{t4.1}
Let $\pi=\langle\sigma\rangle \simeq C_m$ be a cyclic group of
order $m$. Suppose that $n\mid m$ and $M$ is an invertible
$\pi$-lattice. Then
\begin{gather*}
[M/(\sigma^n-1)M]^{fl}=\sum_{d\mid n}  [(M/\Phi_d(\sigma)M)_0]^{fl}, \mbox{ and} \\
[(M/\Phi_n(\sigma)M)_0]^{fl}=\sum_{d\mid n}
\mu\left(\frac{n}{d}\right) [M/(\sigma^d-1)M]^{fl}
\end{gather*}
where $\mu$ is the M\"obius function.
\end{theorem}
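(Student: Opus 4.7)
The plan is to prove the first identity; the second then follows by Möbius inversion over the divisors of $n$. Both formulas should emerge from a devissage reflecting the factorization $\sigma^n-1 = \prod_{d\mid n}\Phi_d(\sigma)$ in $\bm{Z}[\sigma]$, which breaks $M/(\sigma^n-1)M$ up into the cyclotomic pieces $(M/\Phi_d(\sigma)M)_0$.

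I would fix an ordering $d_1,\ldots,d_r$ of the divisors of $n$, set $\Psi_i=\prod_{j\le i}\Phi_{d_j}(\sigma)\in\bm{Z}[\sigma]$, and consider the chain
$$M=\Psi_0M\supset \Psi_1M\supset\cdots\supset \Psi_rM=(\sigma^n-1)M.$$
This induces a filtration of $M/(\sigma^n-1)M$ whose $i$-th successive quotient is $\Psi_{i-1}M/\Psi_iM$, and multiplication by $\Psi_{i-1}(\sigma)$ gives a surjection $M/\Phi_{d_i}(\sigma)M\twoheadrightarrow \Psi_{i-1}M/\Psi_iM$. The rational decomposition $\bm{Q}[C_m]/(\sigma^n-1)\simeq \prod_{d\mid n}\bm{Q}(\zeta_d)$ matches ranks, so the kernel of this surjection is a finite abelian group. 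Analyzing this kernel, using the invertibility of $M$, should identify it with the $\bm{Z}$-torsion $t(M/\Phi_{d_i}(\sigma)M)$, giving $\Psi_{i-1}M/\Psi_iM\simeq (M/\Phi_{d_i}(\sigma)M)_0$.

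To sum up flabby classes along the filtration, I would invoke Lemma \ref{l2.5}. Since $\pi$ is cyclic, all its Sylow subgroups are cyclic, so Lemma \ref{l2.5}(2) is available once one verifies $\widehat H^0(\pi',\Psi_iM/(\sigma^n-1)M)=0$ for every subgroup $\pi'\subset\pi$. One cannot simply invoke Lemma \ref{l2.5}(1), since the successive quotients $(M/\Phi_d(\sigma)M)_0$ are generally not invertible $\pi$-lattices---for instance, $M=\bm{Z}[C_2]$ with $d=2$ yields the sign lattice $\bm{Z}_-$, which is neither flabby nor invertible as a $C_2$-lattice. Iterating then gives
$$[M/(\sigma^n-1)M]^{fl}=\sum_{d\mid n}[(M/\Phi_d(\sigma)M)_0]^{fl},$$
and Möbius inversion of this identity (applied for each $n'\mid n$) yields the second formula.

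The main obstacle is the Tate cohomology vanishing required by Lemma \ref{l2.5}(2). I expect the strategy to be: first handle the case of a transitive permutation lattice $M=\bm{Z}[\pi/\pi']$ by explicit computation, where the filtration becomes the cyclotomic filtration of $\bm{Z}[C_{m/|\pi'|}]$ with subquotients $\bm{Z}[\zeta_d]$ and both sides of the identity can be checked by hand (yielding as a byproduct the nontrivial relation $\sum_{d\mid e}[\bm{Z}[\zeta_d]]^{fl}=0$ in $F_\pi$); then pass to a general invertible $M$ by writing $M\oplus N\simeq P$ for a permutation $P$ and using that $N$ is also invertible. This last step is delicate because additivity over direct sums does not automatically transfer an identity of the form ``$[-]^{fl}$-defect vanishes'' from $P$ to its summands; extracting the identity for $M$ alone appears to require a further symmetric argument on the pair $(M,N)$, and this is where the real content of the theorem lies.
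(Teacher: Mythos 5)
Your proposal takes a genuinely different route from Swan's proof (which is the one the paper follows and modifies in Theorem \ref{t4.3}), and it contains a gap that is not a technicality but is precisely the point Swan's construction is engineered to avoid.

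Swan proves the M\"obius formula first, via a zig-zag of $2^r$ short exact sequences
\[
0\to \bm{Z}\pi/\langle g_k(\sigma)\rangle \to \bm{Z}\pi/\langle h_k(\sigma)\rangle \to \bm{Z}\pi/\langle \sigma^{e_k}-1\rangle\to 0
\]
carefully arranged so that consecutive sequences share a term and the alternating sum telescopes. The essential feature of this design is that \emph{every} quotient that appears in the devissage is of the form $M/(\sigma^{e}-1)M$, and these are invertible $\pi$-lattices when $M$ is (Swan's Lemma 5.3). Consequently only Lemma \ref{l2.5}(1) is ever invoked; no Tate cohomology computation is required. Your filtration $M\supset\Psi_1M\supset\cdots\supset\Psi_rM$ instead produces the cyclotomic pieces $(M/\Phi_d(\sigma)M)_0$ as successive quotients, and, as you yourself note, these are in general neither invertible nor even flabby, so Lemma \ref{l2.5}(1) is unavailable. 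You are therefore forced onto Lemma \ref{l2.5}(2), which requires $\widehat{H}^0(\pi',\Psi_iM/(\sigma^n-1)M)=0$ for all $\pi'\subset\pi$ at each stage; and this is the place where your proof stops.

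Two concrete problems with the $\widehat H^0$ route. First, the vanishing depends on the ordering of the divisors, which you leave unspecified. Already for $\pi=C_p$, $n=p$, $M=\bm{Z}\pi$: taking $d_1=p$, $d_2=1$ gives $\Psi_1M=N\bm{Z}\pi\simeq\bm{Z}$ with trivial action, and $\widehat{H}^0(C_p,\bm{Z})=\bm{Z}/p\bm{Z}\neq 0$, so Lemma \ref{l2.5}(2) is inapplicable for that sequence; only the ordering $d_1=1$, $d_2=p$ works in this example. Second, even with a well-chosen ordering, verifying the vanishing for a general invertible $M$ (rather than merely a permutation $M$) is not addressed, and your fallback --- establish the identity for permutation $M$ and then deduce it for a summand $M$ of $P\simeq M\oplus N$ --- does not close the gap, for exactly the reason you state: additivity gives only the sum of the two putative identities for $M$ and $N$, not either one separately. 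So either the $\widehat H^0$ computation must be carried out in full for general invertible $M$, or the strategy has to change. Swan's change of strategy --- telescoping over coinvariant quotients $M/(\sigma^e-1)M$ instead of filtering by cyclotomic ideals --- is precisely what removes the need for any cohomology hypothesis, and is where the real content of the argument lives.

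The surrounding structure of your proposal is fine: the equivalence of the two displayed formulas by M\"obius inversion (in either direction) is correct; the identification of the successive quotients $\Psi_{i-1}M/\Psi_iM$ with $(M/\Phi_{d_i}(\sigma)M)_0$ via multiplication by $\Psi_{i-1}(\sigma)$, using that the kernel is finite by a rank count and that the image is $\bm{Z}$-torsion-free, is sound; and the observation that $(M/\Phi_d(\sigma)M)_0$ can fail to be invertible (your $\bm{Z}[C_2]$, $d=2$ example) is accurate and correctly rules out the naive use of Lemma \ref{l2.5}(1). But as it stands the argument is not complete.
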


\medskip
Examining the proof of the above Theorem \ref{t4.1} in \cite{Sw6}, we find that such a result is valid if $\sigma$ belongs to the center of the group $\pi$. We record this observation as follows.

\begin{theorem}  \label{t4.5}
Let $\pi$ be a finite group and $\sigma \in \pi$ such that $\sigma$ belongs to the center of $\pi$. Suppose that $\langle\sigma\rangle \simeq C_m$ and $n\mid m$. If $M$ is an invertible
$\pi$-lattice, then
\begin{gather*}
[M/(\sigma^n-1)M]^{fl}=\sum_{d\mid n}  [(M/\Phi_d(\sigma)M)_0]^{fl}, \mbox{ and} \\
[(M/\Phi_n(\sigma)M)_0]^{fl}=\sum_{d\mid n}
\mu\left(\frac{n}{d}\right) [M/(\sigma^d-1)M]^{fl}
\end{gather*}
where $\mu$ is the M\"obius function.
\end{theorem}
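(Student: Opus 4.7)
The plan is to verify that the proof of Theorem \ref{t4.1} given in \cite{Sw6} goes through verbatim once one observes that centrality of $\sigma$ makes every construction in Swan's argument $\pi$-equivariant rather than merely $\langle\sigma\rangle$-equivariant.

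First I would record the basic observation underlying the whole result: since $\sigma$ lies in the center of $\pi$, multiplication by any polynomial in $\sigma$ with integer coefficients is a $\pi$-module endomorphism of $M$. In particular $(\sigma^n-1)M$ and $\Phi_d(\sigma)M$ are $\pi$-sublattices of $M$; and because the torsion submodule of any $\pi$-module is canonically $\pi$-stable, $(M/\Phi_d(\sigma)M)_0$ is a $\pi$-lattice. Consequently every flabby class appearing in the statement is a well-defined element of $F_\pi$, and the identities are at least meaningful.

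Next I would transcribe Swan's proof. He factors $\sigma^n-1=\prod_{d\mid n}\Phi_d(\sigma)$ and exploits the pairwise coprimality of the $\Phi_d(\sigma)$ in $\bm{Z}[\sigma]/(\sigma^n-1)$ to build a filtration of $M/(\sigma^n-1)M$ whose successive quotients are the $(M/\Phi_d(\sigma)M)_0$, up to invertible $\langle\sigma\rangle$-lattice corrections controlled by the invertibility of $M$ (which supplies splittings through the ambient permutation lattice of which $M$ is a summand). Each short exact sequence in this filtration is built from polynomial operations in $\sigma$ together with splitting projectors inherited from a permutation $\langle\sigma\rangle$-lattice; in the cyclic setting the permutation $\langle\sigma\rangle$-lattice may be taken to be a permutation $\pi$-lattice witnessing the $\pi$-invertibility of $M$, and the projectors commute with the $\pi$-action thanks to centrality. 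Hence all the sequences are sequences of $\pi$-lattices, and the devissage tool (Lemma \ref{l2.5}(1)) applies in $F_\pi$ exactly as it did in $F_{\langle\sigma\rangle}$. Summing the resulting identities in $F_\pi$ delivers the first formula, and the second follows by M\"obius inversion on the divisor lattice of $n$, a purely formal manipulation inside the abelian monoid $F_\pi$.

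The main obstacle is essentially bookkeeping: one must verify, divisor by divisor, that every projector, splitting, and torsion-free quotient arising in Swan's construction is compatible with the full $\pi$-action, not merely with $\sigma$. Centrality of $\sigma$ reduces each such check to an automatic consequence of the definitions, and in particular ensures that the invertibility of $M$ as a $\pi$-lattice (rather than the weaker invertibility as a $\langle\sigma\rangle$-lattice) propagates through the filtration so that Lemma \ref{l2.5}(1) is legitimately applicable at every step. No new cohomological input is required; the theorem is thus a direct generalization of Theorem \ref{t4.1}, the only new ingredient being the remark that centrality is exactly what the proof in \cite{Sw6} actually uses.
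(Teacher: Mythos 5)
Your proposal is correct and takes essentially the same approach as the paper, which offers no detailed proof for Theorem \ref{t4.5} beyond the remark that Swan's argument for Theorem \ref{t4.1} goes through once $\sigma$ is central. You have correctly identified the decisive point — when $\sigma$ is central, multiplication by any $g(\sigma) \in \bm{Z}[\sigma]$ is a $\bm{Z}\pi$-module endomorphism, so the short exact sequences $0 \to \bm{Z}\pi/\langle g_k(\sigma)\rangle \to \bm{Z}\pi/\langle h_k(\sigma)\rangle \to \bm{Z}\pi/\langle \sigma^{e_k}-1\rangle \to 0$ in Swan's devissage are genuine sequences of $\pi$-lattices, which is exactly the obstruction the paper flags (in the Remark after Theorem \ref{t4.4}) for non-central $\sigma$; your description of the intermediate mechanics (pairwise coprimality, projectors from a permutation lattice) is looser than Swan's actual telescoping argument, but this is immaterial to the logic of reducing to Swan's proof.
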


%---------------------------d4.2
\begin{defn} \label{d4.2}
In Theorem \ref{t4.3} we will consider a finite group $\pi$ which
contains a cyclic normal subgroup $\langle \sigma \rangle$
satisfying that, for any element $\lambda \in \pi$, $\lambda
\sigma \lambda^{-1}= \sigma$ or $\sigma^{-1}$ depending on
$\lambda$; thus we will write $\lambda \sigma \lambda^{-1}=
\sigma^{\epsilon (\lambda)}$ where $\epsilon (\lambda) \in \{1, -1
\}$. Note that $\epsilon (\lambda_1 \lambda_2)=\epsilon
(\lambda_1) \epsilon (\lambda_2)$ for any $\lambda_1, \lambda_2
\in \pi$. Some sample examples of such groups $\pi$ are :
$\pi=\langle
\sigma,\tau:\sigma^m=\tau^2=1,\tau\sigma\tau^{-1}=\sigma^{-1}\rangle\simeq
D_m$, $\pi=\langle\sigma,\rho,\tau:{\sigma}^m=\rho^n=\tau^2=1,\tau\rho=\rho\tau, %
\sigma\rho=\rho\sigma,\tau\sigma\tau^{-1}=\sigma^{-1}\rangle
\simeq C_n \times D_m$, and
$\pi=\langle\sigma,\tau:\sigma^{2m}=\tau^4=1,\sigma^m=\tau^2,\tau\sigma\tau^{-1}=\sigma^{-1}\rangle
\simeq Q_{4m}$,

Let $\pi$ be the group as above and $M$ be a $\pi$-lattice. Define
another $\pi$-lattice $M^*$ as follows. As an abelian group,
$M^*=M$. If $x\in M^*$, $\lambda\in \pi$, denote the scalar
product of $\lambda$ and $x$ in $M^*$ by $\lambda * x$; define
$\lambda
* x:=\epsilon(\lambda) \cdot (\lambda \cdot x)$ where $\lambda\cdot
x$ is the scalar product in $M$.

It is easy to verify that $(M_1\oplus M_2)^*\simeq M_1^*\oplus
M_2^*$ and $(\bm{Z}\pi)^*\simeq \bm{Z}\pi$ (Reason: If
$\sum_{\lambda \in \pi} n_{\lambda}\lambda$ is a general element
in $\bm{Z}\pi$, then a $\bm{Z}\pi$-module isomorphism from
$\bm{Z}\pi$ to $(\bm{Z}\pi)^*$ is given by sending $\sum_{\lambda
\in \pi} n_{\lambda}\lambda$ to $\sum_{\lambda \in \pi}
\epsilon(\lambda) n_{\lambda}\lambda$).

In particular, if $\c{A}$ is a projective ideal over $\bm{Z}\pi$,
$\c{A}^*$ is also a projective ideal over $\bm{Z}\pi$. However, if
$M$ is an invertible $\pi$-lattice, it is not necessary that $M^*$
should be invertible (when $\pi=D_m$, $\bm{Z}^*$ is not invertible).

\end{defn}

%------------------------t4.3
\begin{theorem} \label{t4.3}
Let $\pi$ be a group in Definition \ref{d4.2} with $\langle \sigma
\rangle \simeq C_m$ where $m$ is an integer $\ge 1$ {\rm(}note that
this includes the case of $Q_{4m}$ where $\langle \sigma \rangle
\simeq C_{2m}$, i.e. regarding $2m$ as $m${\rm)}. Let $M$ be an
invertible $\pi$-lattice and $M^*$ be the $\pi$-lattice defined in
Definition \ref{d4.2}. Suppose that $n\mid m$.

{\rm (i)} If $n=1$ or $n=2$ {\rm(}when $m$ is an even integer{\rm)}, then
\begin{gather*}
[M/(\sigma^n-1)M]^{fl}=\sum_{d\mid n}  [(M/\Phi_d(\sigma)M)_0]^{fl}, \mbox{ and} \\
[(M/\Phi_n(\sigma)M)_0]^{fl}=\sum_{d\mid n}
\mu\left(\frac{n}{d}\right) [M/(\sigma^d-1)M]^{fl}
\end{gather*}
where $\mu$ is the M\"obius function.

{\rm (ii)} If $n \ge 3$, then

\begin{gather}
\begin{split}
[M/(\sigma^n-1)M]^{fl}={}&[(M/\Phi_1(\sigma)M)_0]^{fl}+\sum_{d\mid n, d \ge 3} [(M^*/\Phi_d(\sigma)M^*)_0]^{fl} \mbox{ \rm{(}$n$ is odd\rm{)},} \\ % (1)
[M/(\sigma^n-1)M]^{fl}={}&[(M/\Phi_1(\sigma)M)_0]^{fl}+[(M/\Phi_2(\sigma)M)_0]^{fl} \\
& +\sum_{d\mid n, d \ge 3} [(M^*/\Phi_d(\sigma)M^*)_0]^{fl} \mbox{ \rm{(}$n$ is even\rm{)}, and}
\end{split} \\ % (1a)
[(M^*/\Phi_n(\sigma)M^*)_0]^{fl}=\sum_{d\mid n} \mu \left(\frac{n}{d}\right) [M/(\sigma^d-1)M]^{fl} % (2)
\end{gather}

where $\mu$ is the M\"obius function. In particular,
\begin{gather*}
[\bm{Z}\pi/\langle\sigma^n-1\rangle]^{fl}=\sum_{d\mid n}[\bm{Z}\pi/\langle\Phi_d(\sigma)\rangle]^{fl}, \mbox{ and} \\
[\bm{Z}\pi/\langle\Phi_n(\sigma)\rangle]^{fl}=\sum_{d\mid n}
\mu\left(\frac{n}{d}\right)[\bm{Z}\pi/\langle\sigma^d-1\rangle]^{fl}.
\end{gather*}
\end{theorem}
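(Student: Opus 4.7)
We adapt Swan's inductive filtration argument for the cyclic case (Theorem~\ref{t4.1}, proved in \cite{Sw6}) to the present non-central setting. The new feature is that elements $\lambda \in \pi$ with $\epsilon(\lambda) = -1$ conjugate $\sigma$ to $\sigma^{-1}$, and the reciprocal identities $\Phi_1(\sigma^{-1}) = -\sigma^{-1}\Phi_1(\sigma)$ together with $\Phi_e(\sigma^{-1}) = \sigma^{-\phi(e)}\Phi_e(\sigma)$ for $e \geq 2$ force a sign twist, captured by the $*$-construction of Definition~\ref{d4.2}, on exactly those subquotients where $\sigma$ acts as a primitive $d$-th root of unity with $d \geq 3$.

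Formula~(3) is derived from (i) and (ii) by M\"obius inversion over the divisor lattice of $n$. The ``In particular'' statement then follows by specialising to $M = \bm{Z}\pi$ and using $(\bm{Z}\pi)^* \simeq \bm{Z}\pi$ (Definition~\ref{d4.2}). Hence we focus on (i) and (ii). For $n=1$ both sides of (i) equal $[(M/(\sigma-1)M)_0]^{fl}$ since $M/(\sigma-1)M$ is torsion-free for invertible $M$. For $n=2$ with $m$ even, use the $\pi$-equivariant short exact sequence $0 \to (\sigma-1)M/(\sigma^2-1)M \to M/(\sigma^2-1)M \to M/(\sigma-1)M \to 0$; on the submodule $\sigma$ acts as $-1$ so that $\sigma^{\epsilon(\lambda)}=\sigma$, and multiplication by $\sigma-1$ identifies the submodule with $(M/\Phi_2(\sigma)M)_0$ without any twist.

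For the main case $n \ge 3$, filter $M/(\sigma^n-1)M$ along the factorization $\sigma^n-1=\prod_{d\mid n}\Phi_d(\sigma)$ by the submodules $\prod_{d\in S}\Phi_d(\sigma)\cdot M/(\sigma^n-1)M$, as $S$ runs over nested subsets of the divisor set of $n$. Each successive graded piece is, up to flabby-class-trivial torsion (arising from $\pi$-fixed point sublattices, handled using Lemma~\ref{l2.5} and the invertibility of $M$), the image of a multiplication map of the form
\[
(M^{(*)}/\Phi_d(\sigma)M^{(*)})_0 \xrightarrow{\;\sigma^a\psi_d(\sigma)\;} M/(\sigma^n-1)M
\]
with $\psi_d=(\sigma^n-1)/\Phi_d(\sigma)$ and $a\in\bm{Z}$ chosen to ensure $\pi$-equivariance. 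Using the reciprocal identities one computes, modulo $\sigma^n-1$, $\psi_d(\sigma^{-1})\equiv\psi_d(\sigma)$ for $d \in\{1,2\}$ on the image (where $\sigma$ acts as $\pm 1$), so $a=0$ works and the source is $(M/\Phi_d(\sigma)M)_0$. For $d\ge 3$ one has $\psi_d(\sigma^{-1})\equiv-\sigma^{\phi(d)}\psi_d(\sigma)$: the sign $-1$ matches the $\epsilon(\lambda)=-1$ sign of the $*$-twist, and the residual $\sigma^{\phi(d)}$ is absorbed by choosing $a$ with $2a\equiv\phi(d)\pmod d$ (possible because $\phi(d)$ is even for $d\ge 3$). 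Thus the source is $(M^*/\Phi_d(\sigma)M^*)_0$, giving formula (ii).

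\textbf{Main obstacle.} The principal technical difficulty is the $d\ge 3$ $\pi$-equivariance verification, requiring careful tracking of signs and powers of $\sigma$ arising from the reciprocal identities and their precise interaction with the $*$-twist. A secondary difficulty is handling the torsion that appears in the graded pieces of the filtration and verifying it contributes trivially to the flabby class, via Lemma~\ref{l2.5} and the fixed-point submodules $M^{\langle\sigma^{n'}\rangle}$ for intermediate divisors $n'\mid n$.
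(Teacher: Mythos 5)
Your filtration along $F_S=\prod_{d\in S}\Phi_d(\sigma)\cdot M/(\sigma^n-1)M$ captures the right mechanism for the $*$-twist (the reciprocal identities $\Phi_1(\sigma^{-1})=-\sigma^{-1}\Phi_1(\sigma)$ and $\Phi_e(\sigma^{-1})=\sigma^{-\phi(e)}\Phi_e(\sigma)$ are indeed what produce the sign, and your computation $\psi_d(\sigma^{-1})\equiv-\sigma^{\phi(d)}\psi_d(\sigma)\pmod{\sigma^n-1}$ is correct), but there is a genuine gap in the additivity step. To conclude $[F_{k-1}]^{fl}=[F_k]^{fl}+[F_{k-1}/F_k]^{fl}$ from the short exact sequence $0\to F_k\to F_{k-1}\to F_{k-1}/F_k\to 0$, Lemma~\ref{l2.5}(1) requires the quotient $F_{k-1}/F_k$ to be an invertible $\pi$-lattice. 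But these graded pieces are, up to torsion, $(M^{*}/\Phi_d(\sigma)M^{*})_0$ for $d\ge 3$, and $M^*$ is not invertible in general even when $M$ is (Definition~\ref{d4.2} explicitly notes $\bm{Z}^*$ is not invertible for $\pi=D_m$). Lemma~\ref{l2.5}(2) is the other option, but it requires verifying $\widehat{H}^0(\pi',F_k)=0$ for all subgroups $\pi'$, which you do not do and which is not routine. Your appeal to ``Lemma~\ref{l2.5} and the fixed-point submodules'' is too vague to cover this.

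The paper sidesteps exactly this issue. Rather than a linear filtration by cyclotomic factors, it sets up Swan's zigzag of two-sided ideals $I^{(k)}\supset J^{(k)}$ in $\bm{Z}\pi$ (Step 3), producing exact sequences $0\to N_k\to N'_k\to M/(\sigma^{e_k}-1)M\to 0$ where the \emph{quotient} is always of the form $M/(\sigma^{e}-1)M$ for $e\mid n$. This quotient is invertible whenever $M$ is (by \cite[Lemma 5.3]{Sw6}), so Lemma~\ref{l2.5}(1) applies cleanly at every step. The alternating constraints $J^{(2k)}=J^{(2k-1)}$ and $I^{(2k)}/J^{(2k)}\simeq I^{(2k+1)}/J^{(2k+1)}$ then give the telescoping sum $\sum(-1)^k[M/(\sigma^{e_k}-1)M]^{fl}$, i.e.\ Formula~(2); Formula~(1) follows by M\"obius equivalence. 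The $*$-twist enters only once, at the final identification $I^{(1)}/J^{(1)}\otimes_{\bm{Z}\pi}M\simeq M^*/\Phi_n(\sigma)M^*$ (Steps 5--6), via the central elements $\widetilde{\Phi}_d=\sigma^{-\phi(d)/2}\Phi_d(\sigma)$ for $d\ge 3$. Your approach would need a substitute mechanism for the additivity over your filtration before it could be completed; as written it does not establish the main identity. (A minor point: the statement has only two numbered formulas, (1) and (2); the ``Formula (3)'' in your plan appears to be a slip for (2).)
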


\begin{proof}
For the case $n=1$ or $2$, we may regard the lattices $M/(\sigma^n-1)M$, $(M/\Phi_d(\sigma)M)_0$ (where $d$ divides $n$) as $\pi'$-lattices where $\pi'=\pi/\langle\sigma^2\rangle$. In this situation, $\sigma$ belongs to the center of $\pi'$. Thus we may apply Theorem \ref{t4.5}.

From now on, we will assume that $n \ge 3$.

We will follow the proof of \cite[Theorem 5.1 and Corollary
5.2]{Sw6} with necessary modifications. The Formulae (1) and (2)
are equivalent. It suffices to prove Formula (2). As to the case
$M=\bm{Z}\pi$, recall that $(\bm{Z}\pi)^*\simeq \bm{Z}\pi$; thus
the formulae for $\bm{Z}\pi$ are consequences of Formulae (1) and
(2).

Note that $M/(\sigma^n-1)M$ is torsion-free, because $M$ is
invertible (see \cite[Lemma 5.3]{Sw6}).

\bigskip
Step 1. Let $n\mid m$ and $f(X)\cdot g(X)\mid X^n-1$. Let
$f(\sigma)\cdot \bm{Z}\pi$ be the right ideal of $\bm{Z}\pi$
generated by $f(\sigma)$. For any $\lambda\in \pi$, since
$\lambda\cdot\sigma\cdot\lambda^{-1}=\sigma^i$ for some $i\ge 1$
with $\gcd\{i,m\}=1$, it follows that $\lambda\cdot f(\sigma)\cdot
\lambda^{-1}=f(\sigma^i)$. For any $d$ with $d\mid m$, we have
$\Phi_d(X^i)=\Phi_d(X)\cdot h(X)$ for some $h(X)\in \bm{Z}[X]$
because $\gcd\{i,d\}=1$. As $f(X)$ is a product of cyclotomic
polynomials $\Phi_d(X)$ with $d\mid n$, it follows that
$f(\sigma^i)\in f(\sigma)\cdot \bm{Z}\pi$.

In summary, the right ideal $f(\sigma)\cdot \bm{Z}\pi$ is also a
left ideal. We write it simply as $\langle f(\sigma)\rangle$.
Hence $\bm{Z}\pi/\langle f(\sigma)\rangle$, $\bm{Z}\pi/\langle
f(\sigma)g(\sigma)\rangle$, $\bm{Z}\pi/\langle g(\sigma)\rangle$
may be regarded as two-sided $\bm{Z}\pi$-modules.

In \cite[Lemma 4.2]{Sw6}, the exact sequence
\[
0\to \bm{Z}\pi/\langle f(\sigma)\rangle \to \bm{Z}\pi/\langle
f(\sigma)g(\sigma)\rangle \to \bm{Z}\pi/\langle g(\sigma)\rangle
\to 0
\]
is considered when $\pi$ is a cyclic group. Note that the map
$\bm{Z}\pi/\langle f(\sigma)\rangle \to \bm{Z}\pi/\langle
f(\sigma)g(\sigma)\rangle$ is the multiplication map by
$g(\sigma)$. A similar exact sequence as above when $\pi$ is the
dihedral group is not available because the multiplication map by
$g(\sigma)$ is not a morphism of $\bm{Z}\pi$-modules. We will give
a modification of it in Step 2.

For another modification, in \cite[pages 246--247]{Sw6}, a
sequence of integers $e_k$ is defined and polynomials $g_k(X),
h_k(X)\in \bm{Z}[X]$ are constructed such that the exact sequences
\begin{equation} % (3)
0\to \bm{Z}\pi/\langle g_k(\sigma)\rangle \to \bm{Z}\pi/\langle
h_k(\sigma)\rangle \to \bm{Z}\pi/\langle \sigma^{e_k}-1\rangle \to
0
\end{equation}
satisfy the conditions
\[
h_{2k}(X)=h_{2k+1}(X) \mbox{ ~and~ } g_{2k-1}(X)=g_{2k}(X).
\]

In fact, it is defined that $g_k(X)=f_{k-1}(X)$ if $k$ is even,
$g_k(X)=f_k(X)$ if $k$ is odd, and $h_k(X)=f_k(X)$ if $k$ is even,
$h_k(X)=f_{k-1}(X)$ if $k$ is odd (see the definition of $f_k(X)$
in \cite[page 246]{Sw6}).

We will find a modification of the above construction in Step 3. A
concrete case will be illustrated in Step 4 to exemplify the ideas
of Step 2 and Step 3. The general case will be proved in Step 6.

\bigskip
Step 2. Recall $\langle \sigma\rangle\simeq C_m$ where $m\ge 3$.

For each $d\mid m$ with $d\ge 3$, define
$\widetilde{\Phi}_d=\sigma^{-\varphi(d)/2}\Phi_d(\sigma)\in
\bm{Z}\pi$ where $\varphi(d)$ is the Euler $\varphi$-function.
Define $\widetilde{\Phi}_1=\sigma-1 $ and, if $m$ is an even
integer, define $\widetilde{\Phi}_2=\sigma+1 \in \bm{Z}\pi$.

Note that $\widetilde{\Phi}_d$ belongs to the center of
$\bm{Z}\pi$ if $d\ge 3$.

To simplify the notation, we will write $(d)$ for the two-sided
ideal $\langle
\widetilde{\Phi}_d\rangle=\langle\Phi_d(\sigma)\rangle$; thus
$(d_1)(d_2)\cdots(d_t)$ is just the principal ideal generated by
$\widetilde{\Phi}_{d_1}\cdot \widetilde{\Phi}_{d_2}\cdots
\widetilde{\Phi}_{d_t}$.

\bigskip
Step 3. Let $p_1,p_2,\ldots,p_r$ be the distinct prime divisors of
$n$; if $n$ is an even integer, we choose to write $p_r=2$, the
last prime divisor of $n$. Define a sequence of integers
$d_0,d_1,\ldots,d_{2^r-1}$ as in \cite[page 246]{Sw6}. Define
$d_0=1$, $d_1=p_1$. If $d_0,d_1,\ldots,d_{2^{s-1}-1}$ have been
defined for $s\ge 1$ (using $p_1,\ldots,p_{s-1}$), define
$d_k=p_sd_{2^s-k-1}$ for $2^{s-1}\le k\le 2^s-1$. Thus
$d_{2^{s-1}},\ldots,d_{2^s-1}$ are
$p_sd_{2^{s-1}-1},\ldots,p_sd_1,p_sd_0$ respectively.

Define $e_k=n/d_k$ for $0\le k\le 2^r-1$.

\medskip
Consider the exact sequence of $\pi$-lattices
\begin{equation} % (4)
0\to \langle \sigma^{e_{2^r-1}}-1\rangle/\langle\sigma^n-1\rangle
\to \bm{Z}\pi/\langle\sigma^n-1\rangle \to
\bm{Z}\pi/\langle\sigma^{e_{2^r-1}}-1\rangle \to 0.
\end{equation}

We designate $I^{(2^r-1)}/J^{(2^r-1)}=\langle
\sigma^{e_{2^r-1}}-1\rangle/\langle\sigma^n-1\rangle$ and define
$I^{(2^r-1)}=\langle\sigma^{e_{2^r-1}}-1\rangle$ and
$J^{(2^r-1)}=\langle\sigma^n-1\rangle$. Then we will construct
two-sided ideals $I^{(k)}$ and $J^{(k)}$ inductively where $1\le
k\le 2^r-2$ so that we have the exact sequences
\begin{equation} % (5)
0\to I^{(k)}/J^{(k)} \to \bm{Z}\pi/J^{(k)} \to
\bm{Z}\pi/\langle\sigma^{e_k}-1\rangle \to 0
\end{equation}
satisfying the conditions
\begin{equation} % (6)
J^{(2k)}=J^{(2k-1)}\mbox{ ~and~ } I^{(2k)}/J^{(2k)}\simeq
I^{(2k+1)}/J^{(2k+1)}.
\end{equation}

\medskip
Assume that we have these exact sequences. Suppose $M$ is an
invertible $\pi$-lattice. Tensor it with these exact sequences (4)
and (5) over $\bm{Z}\pi$. We get an exact sequence of
$\bm{Z}\pi$-modules
\[
\fn{Tor}_1^{\bm{Z}\pi}(\bm{Z}\pi/\langle\sigma^{e_k}-1\rangle,M)\to
(I^{(k)}/J^{(k)}) \otimes_{\bm{Z}\pi} M\to M/J^{(k)}M\to
M/(\sigma^{e_k}-1)M\to 0.
\]

Note that $\fn{Tor}_1^{\bm{Z}\pi}(\bm{Z}\pi/\langle\sigma^{e_k}-1\rangle,M)\otimes_{\bm{Z}} \bm{Q}\simeq %
\fn{Tor}_1^{\bm{Q}\pi}(\bm{Q}\pi/\langle\sigma^{e_k}-1\rangle,\bm{Q}\otimes_{\bm{Z}}
M)$. Since $\bm{Q}\pi$ is a semi-simple algebra, it follows that
$\fn{Tor}_1^{\bm{Q}\pi}(\cdot,\cdot)=0$. Thus the kernel of
$\{(I^{(k)}/J^{(k)})\otimes_{\bm{Z}\pi} M \to M/J^{(k)}M\}$ is
torsion. Now we may apply \cite[Lemma 5.4]{Sw6} because
$M/(\sigma^{e_k}-1)M$ is torsion-free.

\medskip
Define $N_k=((I^{(k)}/J^{(k)})\otimes_{\bm{Z}\pi} M)_0$,
$N'_k=(M/J^{(k)}M)_0$. Thus we obtain exact sequences of
$\pi$-lattices
\[
0\to N_k \to N'_k\to M/(\sigma^{e_k}-1)M\to 0
\]
where $1\le k\le 2^r-1$.

Since $M/(\sigma^{e_k}-1)M$ is invertible, apply Lemma \ref{l2.5}.
We find
\[
[N'_k]^{fl}=[N_k]^{fl}+[M/(\sigma^{e_k}-1)M]^{fl}.
\]

The conditions (6) ensure us to conclude that
\begin{equation} % (7)
[N_1]^{fl}=\sum (-1)^k [M/(\sigma^{e_k}-1)M]^{fl}
\end{equation}
where $0\le k\le 2^r-1$. Because $\mu(d_k)=(-1)^k$ (see \cite[page
247]{Sw6}), we get $[N_1]^{fl}=\sum
\mu(d_k)[M/(\sigma^{n/d_k}-1)M]^{fl}$ where $d_k$ runs over the
square-free divisors of $n$. This finishes the proof of Formula
(2).

Note that $N_1$ may be regarded as a module over
$\bm{Z}\pi/\langle\Phi_n(\sigma)\rangle$. More precisely, we will
show that $N_1$ is isomorphic to $(M^*/\Phi_n(\sigma)M^*)_0$ in
Step 5 and Step 6.

\bigskip
Step 4. As an illustration, consider the case $m=n=pqr$ where $p$,
$q$, $r$ are distinct prime numbers; if $m$ is even, we require
that $r=2$.

By Step 3, $d_0,d_1,d_2,\ldots,d_7$ are $1,p,qp,q,rq,rqp,rp,r$
respectively. Thus $e_0,e_1,\ldots,e_7$ are
$pqr,qr,r,pr,p,1,q,pq$.

\medskip
Define $I^{(7)}=\langle\sigma^{pq}-1\rangle=(1)(p)(q)(pq)$,
$J^{(7)}=\langle\sigma^{pqr}-1\rangle=\prod_{d\mid pqr}(d)$
(remember the notation $(d_1)(d_2)\cdots(d_t)$ in Step 2).

By Formula (5), because $e_6=q$, define
$I^{(6)}=\langle\sigma^q-1\rangle=(1)(q)$.

We will find $J^{(6)}$ such that $I^{(6)}/J^{(6)}\simeq
I^{(7)}/J^{(7)}$ (to ensure the validity of Formula (6)). Define
$J^{(6)}=(1)(q)(r)(pr)(qr)(pqr)$. Since $\widetilde{\Phi}_p,
\widetilde{\Phi}_{pq}$ belong to the center of $\bm{Z}\pi$, the
multiplication by $\widetilde{\Phi}_p \widetilde{\Phi}_{pq}$ is
indeed a $\bm{Z}\pi$-isomorphism from $I^{(6)}/J^{(6)}$ to
$I^{(7)}/J^{(7)}$.

We remark that it is not mysterious at all to find $J^{(6)}$. It
is ``equivalent" to find the product of cyclotomic polynomials
$(X^{pqr}-1)(X^{pq}-1)^{-1}(X^q-1)=(X^n-1)(X^{e_7}-1)^{-1}(X^{e_6}-1)$.

\medskip
Now we turn to $I^{(5)}$ and $J^{(5)}$ using the formulae (5) and
(6).

By Formula (6), define $J^{(5)}=J^{(6)}=(1)(q)(r)(pr)(qr)(pqr)$.
By Formula (5), define $I^{(5)}=\langle\sigma^{e_5}-1\rangle=(1)$.
We get the exact sequence $0\to I^{(5)}/J^{(5)}\to
\bm{Z}\pi/J^{(5)}\to \bm{Z}\pi/\langle\sigma^{e_5}-1\rangle\to 0$
automatically.

\medskip
Proceed as before. Define
$I^{(4)}=\langle\sigma^{e_4}-1\rangle=(1)(p)$. We will find
$J^{(4)}$ such that $I^{(4)}/J^{(4)}\simeq I^{(5)}/J^{(5)}$.
Define $J^{(4)}=(1)(p)(q)(r)(pr)(qr)(pqr)$. Then the
multiplication by $\widetilde{\Phi}_p$ gives the isomorphism
$I^{(5)}/J^{(5)}\simeq I^{(4)}/J^{(4)}$.

Define $J^{(3)}=J^{(4)}=(1)(p)(q)(r)(pr)(qr)(pqr)$,
$I^{(3)}=\langle\sigma^{e_3}-1\rangle=(1)(p)(r)(pr)$.

Define $I^{(2)}=\langle\sigma^{e_2}-1\rangle=(1)(r)$,
$J^{(2)}=(1)(q)(r)(qr)(pqr)$. The multiplication by
$\widetilde{\Phi}_p\widetilde{\Phi}_{pr}$ gives the isomorphism
$I^{(2)}/J^{(2)}\simeq I^{(3)}/J^{(3)}$.

Define $J^{(1)}=J^{(2)}=(1)(q)(r)(qr)(pqr)$,
$I^{(1)}=\langle\sigma^{e_1}-1\rangle=(1)(q)(r)(qr)$.

Note that $I^{(1)}/J^{(1)}$ is a module over
$\bm{Z}\pi/\langle\Phi_{pqr}(\sigma)\rangle$. Moreover,
$\widetilde{\Phi}_1$ and $\widetilde{\Phi}_2$ do not appear in the
multiplication maps associated to the isomorphisms
$I^{(7)}/J^{(7)} \simeq I^{(6)}/J^{(6)}$, $I^{(5)}/J^{(5)}\simeq
I^{(4)}/J^{(4)}$ and $I^{(3)}/J^{(3)}\simeq I^{(2)}/J^{(2)}$.

\bigskip
Step 5. Continue the discussion of Step 4 for the case $m=pqr$.
When $M$ is an invertible $\pi$-lattice, we will show that
$(I^{(1)}/J^{(1)})\otimes_{\bm{Z}\pi} M\simeq
M^*/\Phi_{pqr}(\sigma)\cdot M^*$ where $I^{(1)}$, $J^{(1)}$ are
the two-sided ideals as in Step 4.

Recall that $I^{(1)}=\langle \sigma^{e_1} -1\rangle$ where
$e_1=n/p$ (in the general case, $e_1=n/p_1$). We may regard
$I^{(1)}/J^{(1)}$ as a two-sided cyclic $\bm{Z}\pi$-module with a
suitable generator $u$.

Case 1. The order of $\sigma$ is an odd integer $m$.

Define $\Psi_1=\sigma - \sigma^{-1} \in \bm{Z}\pi$. Note that the
two-sided ideals $\Psi_1\cdot \bm{Z}\pi$ and $(\sigma - 1)\cdot
\bm{Z}\pi$ are identical to each other. For, from
$1+\sigma+\cdots+\sigma^{m-1}=1+(1+\sigma)(\sigma+\sigma^3+\cdots+\sigma^{m-2})$
(because $m$ is odd), multiply both sides by $1-\sigma$. We get
$1-\sigma \in \langle 1-\sigma^2 \rangle$.

Also note that, for any $\lambda \in \pi$, $\lambda \Psi_1
\lambda^{-1}=\epsilon(\lambda)\cdot \Psi_1$.

Since $I^{(1)}=\langle \sigma^{e_1} -1 \rangle=(1)(q)(r)(qr)$, we
may define a generator $u$ of $I^{(1)}/J^{(1)}$ by $u:=\Psi_1
\widetilde{\Phi}_q \widetilde{\Phi}_r\widetilde{\Phi}_{qr}\in
I^{(1)}/J^{(1)}$. In $\bm{Z}\pi$, note that $\lambda u
\lambda^{-1}=\epsilon(\lambda)u$.

Define a map $\psi:M^*\to (I^{(1)}/J^{(1)})\otimes_{\bm{Z}\pi} M$
by $\psi(x)=u\otimes x$ where $x\in M^*$. By the definition of
$M^*$, it is easy to verify that $\psi$ is a $\bm{Z}\pi$-morphism.
Thus $\psi$ is a surjective $\bm{Z}\pi$-morphism. When
$M=(\bm{Z}\pi)^{(a)}$ (a free module), it is easy to see that
$\psi$ is a $\bm{Z}\pi$-isomorphism. In general, take a right
exact sequence $(\bm{Z}\pi)^{(a)}\to (\bm{Z}\pi)^{(b)} \to M\to
0$. By the Three-Lemma, we find an isomorphism
$\widetilde{\psi}:M^*/\Phi_{pqr}(\sigma)M^*\simeq
(I^{(1)}/J^{(1)}) \otimes_{\bm{Z}\pi}M$.

\medskip
Case 2. The order of $\sigma$ is an even integer $m$.

Note that $e_1$ is an even integer because we require that $r=2$
(in the general case, $p_r=2$). Thus $I^{(1)}=\langle \sigma^{e_1}
-1 \rangle =\langle\sigma^{e_1/2}-\sigma^{-e_1/2}\rangle$

Now define the generator $u$ of $I^{(1)}/J^{(1)}$ by
$u:=\sigma^{e_1/2}-\sigma^{-e_1/2}$. In $\bm{Z}\pi$, note that
$\lambda u \lambda^{-1}=\epsilon(\lambda) u$ for any $\lambda \in
\pi$.

Define a map $\psi:M^*\to (I^{(1)}/J^{(1)})\otimes_{\bm{Z}\pi} M$
by $\psi(x)=u\otimes x$ where $x\in M^*$. As before, it is not
difficult to show that we have an isomorphism
$\widetilde{\psi}:M^*/\Phi_n(\sigma)M^*\simeq (I^{(1)}/J^{(1)})
\otimes_{\bm{Z}\pi}M$.

\bigskip
Step 6. Now consider the general case where $n$ is any integer
$\ge 3$ with $n\mid m$. We keep the notations in Step 3; in
particular, recall the prime divisors of $n$ and the integers
$d_k, e_k$ when $0 \le k \le 2^r - 1$.

We will define monic polynomials $F_k \in \bm{Z}[X]$ (where $1 \le
k \le 2^r -1$). Using these integral polynomials, define $J^{(k)}
= \langle F_k(\sigma) \rangle$ the principal two-sided ideal
generated by $F_k(\sigma)$. We also define $I^{(k)} = \langle
\sigma^{e_k} -1 \rangle$. Then these $I^{(k)}$ and $J^{(k)}$
satisfy the conditions (4), (5), (6) in Step 3.

\medskip
First of all, define $E_k (X) = X^{e_k} -1 \in \bm{Z}[X]$ where $0
\le k \le 2^r -1$. We will write $E_k$ for $E_k(X)$ in the sequel.

For $1 \le k \le 2^r -1$, define $F_k \in \bm{Z}[X]$ as follows.
Define $F_{2^r-1}=E_0$, $F_1=F_2, F_3=F_4, \dots, F_{2k-1}=F_{2k},
\ldots$ $F_{2^r-3}=F_{2^r-2}$; define
$F_{2k}=E_{2k}E_{2k+1}^{-1}E_{2k+2}E_{2k+3}^{-1} \cdots E_{2^r
-1}^{-1} E_0$ if $2 \le 2k \le 2^r -2$.

For $1 \le k \le 2^r -1$, define $G_k \in \bm{Z}[X]$ which will be
used in the multiplication isomorphisms of $I^{(2k)}/J^{(2k)}$ and
$I^{(2k+1)}/J^{(2k+1)}$. Define $G_2=G_3, G_4=G_5, \dots,
G_{2k}=G_{2k+1}, \ldots$, $G_{2^r-2}=G_{2^r-1}$; define
$G_{2k+1}=E_{2k+1}^{-1}E_{2k+2}E_{2k+3}^{-1}E_{2k+4} \ldots E_{2^r
-1}^{-1} E_0$ if $1 \le 2k+1 \le 2^r -1$.

It is clear that $F_k=E_k G_k$ for $1 \le k \le 2^r-1$.

Furthermore, all of these $F_k$ and $G_k$ are monic polynomials in
$\bm{Z}[X]$ and each of them divides $X^n -1$. Just compare the
definitions of $F_k$ and $G_k$ with those of $f_k(X)$ in
\cite[Lemma 5.6]{Sw6}: $f_k(X)=E_0E_1^{-1}E_2E_3^{-1} \cdots
E_k^{(-1)^k}$. It is known that $f_k(X)$ is a monic polynomial
dividing $X^n -1$ (see \cite[Lemma 5.6]{Sw6}). It is not difficult
to apply the same method in the proof of Lemma 5.6 of \cite{Sw6}
to prove the same results for $F_k$ and $G_k$.

\medskip
Define $I^{(k)} =\langle E_k(\sigma) \rangle= \langle \sigma^{e_k}
-1 \rangle$ and $J^{(k)} = \langle F_k(\sigma) \rangle$ for $1 \le
k \le 2^{r}-1$.

When $0 \le k \le 2^{r}-1$, it is routine to verify that
$d_{2k+1}/d_{2k}=p_1^{(-1)^k}$. Note that
$G_{2k}(\sigma)=G_{2k+1}(\sigma)$. Now it is easy to show that (i)
if $k$ is an even integer, the isomorphism of
$I^{(2k+1)}/J^{(2k+1)}$ to $I^{(2k)}/J^{(2k)}$ is given by the
multiplication by $\prod_{t \mid e_{2k+1}}\tilde{\Phi}_{tp_1}$
(because $(X^{e_{2k+1}}-1)\prod_{t\mid
e_{2k+1}}\Phi_{tp_1}(X)=X^{e_{2k}}-1$), and (ii) if $k$ is an odd
integer, the isomorphism of $I^{(2k)}/J^{(2k)}$ to
$I^{(2k+1)}/J^{(2k+1)}$ is given by the multiplication by
$\prod_{t \mid e_{2k}}\tilde{\Phi}_{tp_1}$ (because
$(X^{e_{2k}}-1)\prod_{t\mid
e_{2k}}\Phi_{tp_1}(X)=X^{e_{2k+1}}-1$). Hence the isomorphism of
Formula (6) in Step 3 is proved.

Note that $G_1=\langle\Phi_n(X)\rangle$. If we write
$I^{(1)}=\prod_{1\le i\le t}(d_i)$, then
$J^{(1)}=(n)\cdot\prod_{1\le i\le t} (d_i)$. It follows that
$I^{(1)}/J^{(1)}$ is a module over $\bm{Z}\pi/\langle
\Phi_n(\sigma)\rangle$. Now we may show that
$I^{(1)}/J^{(1)}\otimes_{\bm{Z}\pi} M\simeq M^*/\Phi_n(\sigma)M^*$
by the same method as in Step 5.

\bigskip

\end{proof}

\begin{lemma} \label{l4.6}
Let $\zeta$ be a primitive $m$-th root of unity in $\bm{C}$ where $m$ is an odd integer $\ge 3$. Let $\pi= \langle \tau \rangle \simeq C_2$ act on $\bm{Z}[\zeta]$ by $\tau \cdot \zeta=\zeta^{-1}$. Then $\bm{Z}[\zeta] \simeq (\bm{Z}\pi)^{(\phi(m)/2)}$ as $\pi$-lattices.
\end{lemma}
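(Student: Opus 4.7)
The plan is to realize $\bm{Z}[\zeta]$ as a free module of rank one over $R\pi$, where $R:=\bm{Z}[\alpha]$ and $\alpha:=\zeta+\zeta^{-1}$. Since $\tau(\alpha)=\alpha$, the subring $R$ lies in $\bm{Z}[\zeta]^\pi$, so the $\pi$-action on $\bm{Z}[\zeta]$ is $R$-linear and $R\pi=R\otimes_{\bm{Z}}\bm{Z}\pi$ acts on $\bm{Z}[\zeta]$. If I can show $\bm{Z}[\zeta]\simeq R\pi$ as $R\pi$-modules, then because $R$ is $\bm{Z}$-free of rank $\phi(m)/2$ (with $\bm{Z}$-basis $\{1,\alpha,\ldots,\alpha^{\phi(m)/2-1}\}$) on which $\pi$ acts trivially, it follows that $\bm{Z}[\zeta]\simeq R\otimes_{\bm{Z}}\bm{Z}\pi\simeq(\bm{Z}\pi)^{\phi(m)/2}$ as $\bm{Z}\pi$-modules, which is the desired conclusion.

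I would begin by observing that $\zeta$ satisfies the monic relation $X^2-\alpha X+1=0$ over $R$, so $\bm{Z}[\zeta]=R+R\zeta$; and since $\zeta$ is not real while $R\subset\bm{Q}(\alpha)\subset\bm{R}$, the set $\{1,\zeta\}$ is $R$-linearly independent and thus an $R$-basis of $\bm{Z}[\zeta]$. Next I would pass to the $\pi$-orbit $\{\zeta,\tau\zeta\}=\{\zeta,\zeta^{-1}\}$; the change-of-basis matrix from $\{1,\zeta\}$ to $\{\zeta,\zeta^{-1}\}$ has determinant $-\alpha$, so $\{\zeta,\zeta^{-1}\}$ is also an $R$-basis provided $\alpha\in R^{\times}$. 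Assuming $\alpha$ is a unit in $R$, the two summands $R\zeta$ and $R\zeta^{-1}$ are swapped by $\pi$, which identifies $\bm{Z}[\zeta]$ with $R\pi$ as $R\pi$-modules and completes the argument modulo the unit claim.

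The main obstacle is thus to verify $\alpha\in R^{\times}$, equivalently $N_{\bm{Q}(\alpha)/\bm{Q}}(\alpha)=\pm 1$. I plan to use the factorization $\alpha=\zeta^{-1}(1+\zeta^2)$ together with the fact that the $\phi(m)/2$ conjugates of $\alpha$ correspond to the pairs $\{\zeta',(\zeta')^{-1}\}$ among the primitive $m$-th roots, so $N_{\bm{Q}(\alpha)/\bm{Q}}(\alpha)^2=\prod_{\zeta'\text{ primitive}}(\zeta'+(\zeta')^{-1})$. This product factors as $\bigl(\prod_{\zeta'}\zeta'\bigr)^{-1}\prod_{\zeta'}(1+(\zeta')^2)$; the first factor equals $1$ by pairing exponents $a\leftrightarrow m-a$ (possible since $m$ is odd), while the second equals $\prod_{\zeta'}(1+\zeta')=\Phi_m(-1)$, using that squaring permutes the primitive $m$-th roots because $\gcd(2,m)=1$. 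The identity $\Phi_{2m}(X)=\Phi_m(-X)$ for odd $m>1$, combined with the standard evaluation $\Phi_n(1)=1$ whenever $n$ has at least two distinct prime divisors, gives $\Phi_m(-1)=\Phi_{2m}(1)=1$ (valid since $m\ge 3$ is odd, so $2m$ has the distinct prime factors $2$ and an odd one). Hence $N_{\bm{Q}(\alpha)/\bm{Q}}(\alpha)^2=1$, so $\alpha$ is a unit in $R$.
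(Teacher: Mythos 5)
Your proof is correct, and it takes a genuinely different route from the paper's. You give a direct global argument: using $R=\bm{Z}[\zeta+\zeta^{-1}]$ (the ring of integers of the maximal real subfield), you show $\{\zeta,\zeta^{-1}\}$ is an $R$-basis of $\bm{Z}[\zeta]$, which immediately exhibits $\bm{Z}[\zeta]\simeq R\pi$ as $R\pi$-modules and hence $\bm{Z}[\zeta]\simeq(\bm{Z}\pi)^{(\phi(m)/2)}$ as $\bm{Z}\pi$-lattices. The crux is showing $\alpha=\zeta+\zeta^{-1}$ is a unit in $R$, which you do via $N_{\bm{Q}(\zeta)/\bm{Q}}(\alpha)=\Phi_m(-1)=\Phi_{2m}(1)=1$; and since the minimal polynomial of $\alpha$ then has constant term $\pm 1$, $\alpha^{-1}\in\bm{Z}[\alpha]=R$. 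The paper instead argues locally: it shows the middle coefficient $c$ of $\Phi_m(X)$ is odd (using $\Phi_m(1)=\pm 1$ or $\Phi_{p^a}(1)=p$ and palindromy), deduces from this that $\bm{Z}_{(2)}[\zeta]$ is $\bm{Z}_{(2)}\pi$-free of rank $\phi(m)/2$, and then globalizes by invoking Reiner's classification of indecomposable $C_2$-lattices to rule out $\bm{Z}$ and $\bm{Z}_{-}$ summands. Your version avoids both the localization step and Reiner's theorem, at the modest cost of the cyclotomic norm computation, and it produces an explicit free generator (namely $\zeta$) over $R\pi$ rather than only an abstract isomorphism; the paper's local-to-global method is perhaps more robust to variations (and indeed is reused in Step 2 of the proof of Theorem \ref{t4.4}), but for this lemma your argument is cleaner and more elementary.
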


\begin{proof}
Step 1. Denote by $\Phi_d(X)$ the $d$-th cyclotomic polynomial. Define $\Psi_d(X)=(X^d -1)/(X-1)$.

Write $m=\prod_{1 \le i \le t}p_i^{a_i}$ where $p_1, \ldots , p_t$ are distinct prime numbers and $a_i \ge 1$. We will show that the coefficient of $X^{\phi(m)/2}$ in $\Phi_m(X)$ is an odd integer. This is obvious if $m$ is an odd prime power. From now on we may assume that $t\ge 2$ in the above prime decomposition of $m$.

Define a monic polynomial $F(X) \in \bm{Z}[X]$ by $\Psi_m(X)=F(X) \prod_{1 \le i \le t}\Psi_{p^{a_i}}(X)$. It follows that $F(1)=1$. Since $\Phi_m(X)$ is a factor of $F(X)$, thus $\Phi_m(1)=1$ or $-1$.

From $\Phi_m(X)=X^{\phi(m)}\Phi_m(X^{-1})$, we find that $\Phi_m(X)$ is ``symmetric" with respect to the term $X^{\phi(m)/2}$. Because $\Phi_m(1)=1$ or $-1$, thus the coefficient of $X^{\phi(m)/2}$ in $\Phi_m(X)$ is an odd integer.

\bigskip
Step 2. Let $c$ be the coefficient of $X^{\phi(m)/2}$ in $\Phi_m(X)$. It follows that $c\zeta^{\phi(m)/2} \in \sum_{0 \le j \le (\phi(m)/2) -1}\bm{Z} \cdot \zeta^j + \sum_{(\phi(m)/2) +1 \le j \le \phi(m)}\bm{Z} \cdot \zeta^j$.

Let $\bm{Z}_{(2)}$ be the localization of $\bm{Z}$ at the prime ideal $(2)=2\bm{Z}$. Hence we have $c\zeta^{\phi(m)/2} \in \sum_{0 \le j \le (\phi(m)/2) -1}\bm{Z}_{(2)} \cdot \zeta^j + \sum_{(\phi(m)/2) +1 \le j \le \phi(m)}\bm{Z}_{(2)} \cdot \zeta^j$. It follows that $1 \in \sum_{1 \le j \le \phi(m)/2}(\bm{Z}_{(2)} \cdot \zeta^{-j}+\bm{Z}_{(2)} \cdot \zeta^j)$ and $\bm{Z}_{(2)}[\zeta]= \oplus_{1 \le j \le \phi(m)/2}(\bm{Z}_{(2)} \cdot \zeta^{-j}+\bm{Z}_{(2)} \cdot \zeta^j)$.

\bigskip
Step 3. The action of $\pi$ induces an action on $\bm{Z}_{(2)}[\zeta]$. From Step 2, we find that $\bm{Z}_{(2)}[\zeta] \simeq (\bm{Z}_{(2)}\pi)^{(\phi(m)/2)}$, a free module over $\bm{Z}_{(2)}\pi$. Thus $Ext_{\bm{Z}_{(2)}\pi}^1(\bm{Z}_{(2)}[\zeta], \cdot)$ is a zero functor.

By Reiner's Theorem \cite[page 74, Theorem 4.19]{Sw3}, $\bm{Z}[\zeta]$ is a direct sum of $\bm{Z}$, $\bm{Z}_{-}$, or $\bm{Z}\pi$. If $\bm{Z}$ or $\bm{Z}_{-}$ is a direct summand of $\bm{Z}[\zeta]$, then $\bm{Z}_{(2)}$ or $(\bm{Z}_{(2)})_{-}$ is also a direct summand of $\bm{Z}_{(2)}[\zeta]$. By \cite{Sw1}, neither $\bm{Z}_{(2)}$ nor $(\bm{Z}_{(2)})_{-}$ is a projective module over $\bm{Z}_{(2)}\pi$ (by counting the ranks). Hence $Ext_{\bm{Z}_{(2)}\pi}^1(\bm{Z}_{(2)}[\zeta], \cdot)$ would not be a zero functor. This leads to a contradiction. Thus the only direct summands of $\bm{Z}[\zeta]$ are $\bm{Z}\pi$'s, i.e. $\bm{Z}[\zeta]$ is a free module.
\end{proof}

The following theorem is a variant of the above Theorem \ref{t4.3}
for the group $C_{q^f}\times D_m$.

\medskip
\begin{theorem} \label{t4.4}
Let $\pi=\langle\sigma,\rho,\tau:{\rho}^{q^f}=\sigma^m=\tau^2=1$,
$\tau\rho\tau^{-1}=\rho$, $\tau\sigma\tau^{-1}=\sigma^{-1}$,
$\sigma\rho=\rho\sigma\rangle\simeq C_{q^f}\times D_m$ where $q$
is an odd prime number, $f \ge 1$, $m$ is an odd integer with
$m\ge 3$ and gcd $\{q,m \}=1$. Suppose that $M$ is an invertible
$\pi$-lattice and $M^*$ is the $\pi$-lattice defined in Definition
\ref{d4.2}. Then
\begin{gather*}
\begin{split}
[M]^{fl}={}& [(M/\Phi_1(\sigma)M)_0]^{fl}+\sum_{d\mid m, d \ge 3} [(M^*/\Phi_d(\sigma)M^*)_0]^{fl} \mbox{ \rm{(}$m$ is odd\rm{)}}, \\
[M]^{fl}={}& [(M/\Phi_1(\sigma)M)_0]^{fl}+[(M/\Phi_2(\sigma)M)_0]^{fl} \\
 & +\sum_{d\mid m, d \ge 3} [(M^*/\Phi_d(\sigma)M^*)_0]^{fl} \mbox{ \rm{(}$m$ is even\rm{)}},
\end{split}   \\
[(M^*/\Phi_m(\sigma)M^*)_0]^{fl}=\sum_{d\mid
m}\mu\left(\frac{m}{d}\right)[M/(\sigma^d-1)M]^{fl}, \\
[(M^*/\Phi_m(\sigma)M^*)_0]^{fl}=[(M^*/\langle\Phi_{q^f}(\rho),
\Phi_m(\sigma)\rangle M^*)_0]^{fl}+[M^{\prime}]^{fl}
\end{gather*}
where $M^{\prime}$ is a lattice over $\bm{Z}\pi/\pi^{\prime}$ for
some normal subgroup $\pi^{\prime}$ with $\pi^{\prime}\neq \{1
\}$.
\end{theorem}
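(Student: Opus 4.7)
The first two formulas in the statement follow directly by specializing Theorem \ref{t4.3}. Indeed, the group $\pi = C_{q^f}\times D_m$ contains the cyclic normal subgroup $\langle\sigma\rangle \simeq C_m$, and conjugation satisfies $\rho\sigma\rho^{-1}=\sigma$ and $\tau\sigma\tau^{-1}=\sigma^{-1}$; setting $\epsilon(\rho)=\epsilon(\sigma)=1$ and $\epsilon(\tau)=-1$ we verify the hypothesis of Definition \ref{d4.2}. Since $m$ is odd and $m\ge 3$, we may apply Theorem \ref{t4.3} (ii) with $n=m$. Because $\sigma^m=1$ in $\pi$, we have $(\sigma^m-1)M=0$, so $M/(\sigma^m-1)M = M$ as $\pi$-lattices; substituting this equality into Formula (1) (resp.\ Formula (2)) of Theorem \ref{t4.3} (ii) yields the first (resp.\ second) formula of the theorem.

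For the third formula I would carry out a devissage with respect to $\rho$. Consider the natural surjection of $\pi$-lattices
\[
(M^*/\Phi_m(\sigma)M^*)_0 \twoheadrightarrow (M^*/\langle \Phi_{q^f}(\rho), \Phi_m(\sigma)\rangle M^*)_0,
\]
and define $M'$ as its kernel. Since the target is torsion-free and the source is torsion-free, $M'$ is a $\pi$-lattice, and we obtain the short exact sequence
\[
0 \to M' \to (M^*/\Phi_m(\sigma)M^*)_0 \to (M^*/\langle \Phi_{q^f}(\rho), \Phi_m(\sigma)\rangle M^*)_0 \to 0.
\]
By construction, $M'$ is identified (modulo torsion) with the image of $\Phi_{q^f}(\rho)M^*$ inside $(M^*/\Phi_m(\sigma)M^*)_0$. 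Using the factorization $\rho^{q^f}-1 = \Phi_{q^f}(\rho)\cdot(\rho^{q^{f-1}}-1)$ and the relation $\rho^{q^f}=1$ in $\pi$, multiplication by $\rho^{q^{f-1}}-1$ annihilates this image. Hence $M'$ is a lattice over $\bm{Z}\pi/\pi'$ with $\pi' = \langle\rho^{q^{f-1}}\rangle$, which is a non-trivial central (and thus normal) subgroup of $\pi$.

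It then remains to derive the additivity of flabby classes along the above short exact sequence. Because $q$ is odd, $m$ is odd, and $\gcd(q,m)=1$, every Sylow subgroup of $\pi = C_{q^f}\times D_m$ is cyclic, so Lemma \ref{l2.5}(2) applies once the Tate-cohomology condition $\widehat{H}^0(\pi'',M')=0$ is verified for every subgroup $\pi''\subseteq\pi$. This verification is the principal technical hurdle I foresee: one must exploit the explicit realization of $M'$ as the $\Phi_{q^f}(\rho)$-image of a quotient of the lattice $M^*$, the triviality of the $\langle\rho^{q^{f-1}}\rangle$-action on $M'$, and the invertibility of the original lattice $M$ (together with Lemma \ref{l2.6}) to control $\widehat{H}^0(\pi'',M')$ uniformly in $\pi''$. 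Granted this vanishing, Lemma \ref{l2.5}(2) yields
\[
[(M^*/\Phi_m(\sigma)M^*)_0]^{fl} = [(M^*/\langle \Phi_{q^f}(\rho), \Phi_m(\sigma)\rangle M^*)_0]^{fl} + [M']^{fl},
\]
completing the proof.
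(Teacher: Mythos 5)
Your treatment of the first two formulae matches the paper's (a direct specialization of Theorem \ref{t4.3} with $n=m$), but your set-up for the last formula has the short exact sequence oriented the wrong way, and this makes the Tate-cohomology condition in Lemma \ref{l2.5}(2) false rather than merely a ``technical hurdle.'' You take $M'$ to be the kernel of
$(M^*/\Phi_m(\sigma)M^*)_0 \twoheadrightarrow (M^*/\langle\Phi_{q^f}(\rho),\Phi_m(\sigma)\rangle M^*)_0$,
i.e.\ the image of multiplication by $\Phi_{q^f}(\rho)$, and you correctly observe that $\rho^{q^{f-1}}$ acts trivially on this $M'$. But Lemma \ref{l2.5}(2) requires $\widehat{H}^0(\pi'',\text{submodule})=0$ for every subgroup $\pi''$, and here the submodule is precisely your $M'$. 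Taking $\pi''=\langle\rho^{q^{f-1}}\rangle\simeq C_q$, the trivial action gives $\widehat{H}^0(\pi'',M')=M'/qM'$, which is a nonzero $\bm{F}_q$-vector space whenever $M'\neq 0$. So the hypothesis of Lemma \ref{l2.5}(2) cannot be verified for your sequence.

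The paper avoids this by running the devissage in the opposite direction: it uses multiplication by $\rho^{q^{f-1}}-1$ (not by $\Phi_{q^f}(\rho)$), yielding the exact sequence
\[
0 \to (M^*/\langle\Phi_{q^f}(\rho),\Phi_m(\sigma)\rangle M^*)_0 \to (M^*/\Phi_m(\sigma)M^*)_0 \to (M^*/\langle\rho^{q^{f-1}}-1,\Phi_m(\sigma)\rangle M^*)_0 \to 0,
\]
so that the \emph{submodule} is $N:=(M^*/\langle\Phi_{q^f}(\rho),\Phi_m(\sigma)\rangle M^*)_0$ and $M'$ is the \emph{quotient}. On $N$, the element $\rho$ acts through a primitive $q^f$-th root of unity and $\sigma$ through a primitive $m$-th root of unity, so there are no fixed vectors for any nontrivial cyclic subgroup, and the $\widehat{H}^0$-vanishing can actually be carried out (as in the paper's Step~2, reducing to $M=\bm{Z}\pi$ or $\bm{Z}[\pi/\langle\tau\rangle]$ and treating the $q$-, $p$-, and $2$-Sylow cases separately via cyclotomic lattices and a Reiner-type argument at the prime $2$). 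Meanwhile the quotient $(M^*/\langle\rho^{q^{f-1}}-1,\Phi_m(\sigma)\rangle M^*)_0$ is still a lattice over $\bm{Z}\pi/\langle\rho^{q^{f-1}}\rangle$, so the inductive structure you intend is preserved. To repair your argument, replace your exact sequence with the paper's and redirect the $\widehat{H}^0$-vanishing argument to $N$.
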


\begin{proof}
The first two formulae follow from Theorem \ref{t4.3}. It remains
to prove the last formula.

\bigskip
Step 1. Consider the exact sequence $0 \to \bm{Z}\pi/\langle
\Phi_{q^f}(\rho), \Phi_m(\sigma)\rangle \to \bm{Z}\pi/\langle
\Phi_m(\sigma)\rangle \to \bm{Z}\pi/\langle \rho^{q^{f-1}}-1,
\Phi_m(\sigma)\rangle \to 0$ where the first map is defined to be
the multiplication map by $\rho^{q^{f-1}}-1$.

Tensor it with $M^*$ over $\bm{Z}\pi$. Because $M$ is an
invertible lattice, it is not difficult to verify that the
following is an exact sequence of $\pi$-lattices
\[
0 \to (M^*/\langle \Phi_{q^f}(\rho), \Phi_m(\sigma)\rangle M^*)_0
\to (M^*/\Phi_m(\sigma) M^*)_0 \to (M^*/\langle \rho^{q^{f-1}}-1,
\Phi_m(\sigma)\rangle M^*)_0 \to 0
\]
We remark that, for any $\pi$-lattice $E$, the lattice
$(E^*/\langle \Phi_{q^f}(\rho), \Phi_m(\sigma)\rangle E^*)_0$ is
isomorphic to the sublattice $(\rho^{q^{f-1}}-1)g(\sigma)\cdot E$
of $E$ where $g(X) \in \bm{Z}[X]$ is defined by $X^m
-1=\Phi_m(X)g(X)$.

\medskip
Write $M^{\prime}:=(M^*/\langle \rho^{q^{f-1}}-1,
\Phi_m(\sigma)\rangle M^*)_0$. $M^{\prime}$ may be regarded as a
lattice over $\bm{Z}\pi/\langle\rho^{q^{f-1}}\rangle$.

In the next step we will show that
$\hat{H}^0(\pi^{\prime},(M^*/\langle \Phi_{q^f}(\rho),
\Phi_m(\sigma)\rangle M^*)_0) =0$ for all subgroups $\pi^{\prime}$
of $\pi$. Assume this result. We may apply Part (2) of Lemma
\ref{l2.5} and obtain the desired formula
\[
[(M^*/\Phi_m(\sigma) M^*)_0]^{fl}=[(M^*/\langle \Phi_{q^f}(\rho),
\Phi_m(\sigma)\rangle M^*)_0]^{fl}+[M^{\prime}]^{fl}.
\]

\bigskip
Step 2. We will show that, for any subgroup $\pi^{\prime}$ of
$\pi$, $\hat{H}^0(\pi^{\prime},N) =0$ where $N:=(M^*/\langle
\Phi_{q^f}(\rho), \Phi_m(\sigma)\rangle M^*)_0$. Note that
$\Phi_{q^f}(\rho)\cdot N=\Phi_m(\sigma)\cdot N=0$. We may also
assume that $\pi^{\prime} \neq \{ 1 \}$ and $N \neq 0$.

Since $M$ is an invertible lattice, there is a $\pi$-lattice
$M^{\prime\prime}$ such that $M \oplus M^{\prime\prime} \simeq
\oplus_{1 \le i \le s}\bm{Z}\pi/\pi_i$ where $\pi_i$'s are some
subgroups of $\pi$. If $\hat{H}^0(\pi^{\prime}, (\bm{Z}\pi/\langle
\Phi_{q^f}(\rho), \Phi_m(\sigma)\rangle
\otimes_{\bm{Z}\pi}(\bm{Z}\pi/\pi_i)^*)_0) =0$ for all $i$, then
$\hat{H}^0(\pi^{\prime},N) =0$. In other words, the problem is
reduced to the case $M=\bm{Z}\pi/\pi_0$ where $\pi_0$ is a
subgroup of $\pi$.

\medskip
We claim that either $\pi_0=\{ 1 \}$ or $\pi_0=\langle \sigma^i
\tau \rangle$ for some integer $i$.

In fact, if $\pi_0 \cap \langle \rho \rangle  \neq \{ 1 \}$, then
$\rho^{q^t} \in \pi_0$ for some $0 \le t \le f-1$. Thus we have
$(\rho^{q^t}- 1) \cdot N=0$. But we also have
$\Phi_{q^f}(\rho)\cdot N=0$. Since $N$ is torsion-free, this leads
to the conclusion that $N=0$, which has been ruled out at the
beginning of this step. Similarly, it can be shown that $\pi_0
\cap \langle \sigma \rangle = \{ 1 \}$.

In summary, we may assume that $M=\bm{Z}\pi$ or
$M=\bm{Z}\pi/\langle \sigma^i \tau \rangle$. The situation of
$M=\bm{Z}\pi/\langle \sigma^i \tau \rangle$ is similar to that of
$M=\bm{Z}\pi/\langle \tau \rangle$. Thus we consider only the
cases $M=\bm{Z}\pi$ and $M=\bm{Z}\pi/\langle \tau \rangle$.

Denote by $\xi_1$ and $\xi_2$ the images of $\rho$ and $\sigma$ in
$N$ respectively. Note that $\xi_1$ (resp. $\xi_2$) is a primitive
$q^f$-th root of unity (resp. a primitive $m$-th root of unity).
We will write $N=\bm{Z}[\xi_1,\xi_2,\tau]$ or
$N=\bm{Z}[\xi_1,\xi_2]$ in the sequel.

\medskip
For each prime divisor $p$ of $\mid\pi^{\prime}\mid$, choose a
$p$-Sylow subgroup $\pi_p^{\prime}$ of $\pi^{\prime}$. Since
$\hat{H}^0(\pi^{\prime}, N) \to \oplus_p \hat{H}^0(\pi^{\prime}_p,
N)$ is injective, it suffices to show that
$\hat{H}^0(\pi^{\prime}_p, N)=0$, i.e. without loss of generality,
we may assume that $\pi^{\prime}$ is a $p$-group.

\medskip
Subcase 1. $p=q$ and $\pi^{\prime}=\langle \rho^{\prime} \rangle
\simeq C_{q^s}$.

Regard $N$ as a $\pi^{\prime}$-lattice. Then $N \simeq
\bm{Z}[\xi_1]^{(a)}$ for some integer $a$. Let $\zeta_{q^s} \in
\langle \xi_1 \rangle$ be a primitive $q^s$-th root of unity.
Regard $\bm{Z}[\xi_1]$ as a module over $\bm{Z}[\zeta_{q^s}]$; it
is isomorphic to a direct sum of a free module and an ideal. Thus
$N \simeq \bm{Z}[\xi_1]^{(a)} \simeq \bm{Z}[\zeta_{q^s}]^{(b)}
\oplus I$ where $b$ is some integer and $I$ is a non-zero ideal of
$\bm{Z}[\zeta_{q^s}]$. We will show that
$\hat{H}^0(\pi^{\prime},\bm{Z}[\zeta_{q^s}])
=0=\hat{H}^0(\pi^{\prime},I)$.

Extend the action of $\rho^{\prime}$ to $\bm{Q}(\zeta_{q^s})$ and
$\bm{Q}I(=\bm{Q}(\zeta_{q^s}))$. The characteristic polynomial of
this linear map $\rho^{\prime}$ is $\Phi_{q^s}(X)$. Since
$\Phi_{q^s}(1) \neq 0$, there is no vector in
$\bm{Q}(\zeta_{q^s})$, which is left fixed by $\rho^{\prime}$.
Hence $\hat{H}^0(\pi^{\prime},\bm{Z}[\zeta_{q^s}]) =0$ and
$\hat{H}^0(\pi^{\prime},I)=0$.

\medskip
Subcase 2. $p$ is a divisor of $m$ and $\mid\pi^{\prime}\mid=p^s$.

Regard $N$ as a $\pi^{\prime}$-lattice. Then $N \simeq
\bm{Z}[\xi_2]^{(c)}$ for some integer $c$. Let $\zeta_{p^s} \in
\langle \xi_2 \rangle$ be a primitive $p^s$-th root of unity. The
remaining proof is similar to that of Subcase 1.

\medskip
Subcase 3. $p=2$ and $\pi^{\prime}= \langle \sigma^i\tau \rangle$.

Because $m$ is odd, $\sigma^i\tau$ is conjugate to $\tau$. Thus
$\hat{H}^0(\langle \sigma^i\tau \rangle, N) \simeq
\hat{H}^0(\langle \tau \rangle, N)$. Hence we may assume that
$\pi^{\prime}= \langle \tau \rangle$ and it remains to show that
$\hat{H}^0(\langle \tau \rangle, N)=0$.

Suppose that $M=\bm{Z}\pi/\langle \tau \rangle$ (the situation for $M=\bm{Z}\pi$ is similar).

Write $N=\oplus_{i,j}\bm{Z}\xi_1^i\xi_2^j$ where $\tau \cdot
\xi_1^i\xi_2^j=-\xi_1^i\xi_2^{-j}$ and $0 \le i \le \phi(q^f)-1, 0 \le j \le \phi(m)-1$.
We will show that $N \simeq
(\bm{Z}\pi^{\prime})^{(d)}$ for some integer $d$. Assume this. From
$\hat{H}^0(\langle \tau \rangle,\bm{Z}\pi^{\prime})=0$, it follows
that $\hat{H}^0(\langle \tau \rangle, N)=0$.

The proof that $N \simeq
(\bm{Z}\pi^{\prime})^{(d)}$ is similar to that of Lemma \ref{l4.6}. In fact, let $\bm{Z}_{(2)}$ be the localization of $\bm{Z}$ at the prime ideal $(2)=2\bm{Z}$. By the same arguments as in Step 1 and Step 2 of the proof of Lemma \ref{l4.6}, we find that $\bm{Z}_{(2)}\otimes_{\bm{Z}} N =\oplus_{0 \le i \le \phi(q^f)-1}\oplus_{1 \le j \le \phi(m)/2} (\bm{Z}_{(2)}\xi_1^i\xi_2^{-j}+\bm{Z}_{(2)}\xi_1^i\xi_2^j)$. Hence $\bm{Z}_{(2)}\otimes_{\bm{Z}} N\simeq
(\bm{Z}_{(2)}\pi^{\prime})^{(d)}$ for some integer $d$. Then apply Reiner's Theorem to $N$ as in Step 3 of the proof of Lemma \ref{l4.6}. We find that $N \simeq
(\bm{Z}\pi^{\prime})^{(d)}$.
\end{proof}

\bigskip
\begin{remark}
R. G.\ Swan kindly pointed out an error in the first version of
this article. The exact sequence $0\to \bm{Z}\pi/\langle
f(\sigma)\rangle\to \bm{Z}\pi/\langle f(\sigma)g(\sigma)\rangle\to
\bm{Z}\pi/\langle g(\sigma)\rangle \to 0$ in \cite[pages
246-247]{Sw6} should be taken carefully; it is not available if
$\pi$ is a non-abelian group.

We remark also that the five short exact sequences in the middle
of page 96 of \cite{EM4} were meant to be the exact sequences
$0\to \bm{Z}\pi/\langle g_k(\sigma)\rangle \to \bm{Z}\pi/\langle
h_k(\sigma)\rangle \to \bm{Z}\pi/\langle
\sigma^{e_k}-1\rangle\allowbreak \to 0$ of Formula (3) in Step 1
of the proof of Theorem \ref{t4.3}. However, as pointed out
before, when $\pi$ is a non-abelian group (e.g.\ $\pi=D_m$,
$C_n\times D_m$, $Q_{4m}$), there seems no obvious reason why the
maps in these exact sequences should be $\bm{Z}\pi$-morphisms.

\end{remark}

%----------------------------------------S5
\section{Proof of Theorem \ref{t1.4}}

We will devote this section to proving ``$(1)\Rightarrow (2)$ of
Theorem \ref{t1.4}". The goal is to show that $T(\pi)=T^g(\pi)$,
$T^g(\pi)\simeq C(\bm{Z}\pi)/C^q(\bm{Z}\pi)$ and
$C(\bm{Z}\pi)/C^q(\bm{Z}\pi) \simeq C(\Omega_{\bm{Z}\pi})$.

The key idea for the proof of $T^g(\pi)\simeq
C(\bm{Z}\pi)/C^q(\bm{Z}\pi)$ is as follows. We use Theorem
\ref{t2.14} (1) to prove $T^g(\pi)\simeq
C(\bm{Z}\pi)/C^q(\bm{Z}\pi)$, i.e.\ for any invertible
$\pi$-lattice $M$, we will find a projective ideal $\c{A}$ over
$\bm{Z}\pi$ such that $[M]^{fl}=[\c{A}]^{fl}$. For this purpose,
we apply Theorem \ref{t4.1}, Theorem \ref{t4.3}, etc. and reduce
the question to the situation of $(M/\Phi_d(\sigma)M)_0$ or
$(M^*/\Phi_d(\sigma)M^*)_0$ for any $d\mid n$ where
$\langle\sigma\rangle\simeq C_n$ (the case $\pi=C_n \times D_m$
requires some more efforts). Since $(M/\Phi_d(\sigma)M)_0$ or
$(M^*/\Phi_d(\sigma)M^*)_0$ is a torsion-free module over
$\bm{Z}\pi/\langle\Phi_d(\sigma)\rangle$, it is important to
understand the structure of torsion-free modules over the
$\bm{Z}$-order
$\Lambda_d:=\bm{Z}\pi/\langle\Phi_d(\sigma)\rangle$. In most
situations, $\Lambda_d$ is a Dedekind domain, a twisted group ring
or a maximal $\bm{Z}$-order. Thus the results in Section 3 are
applicable. The final blow is to use Theorem \ref{t2.10}, i.e.\
Jacobinski-Roiter's Theorem, to find the projective ideal
$\c{A}_d$ for $(M/\Phi_d(\sigma)M)_0$ or
$(M^*/\Phi_d(\sigma)M^*)_0$.

\bigskip
Before the proof, we recall some basic facts of maximal orders
\cite[Section 26; Re]{CR1}.

%----------------------d5.1
\begin{defn} \label{d5.1}
Let $K$ be a field, $\Sigma$ be a finite-dimensional separable
algebra over $K$. Let $R$ be a Dedekind domain with quotient field
$K$, and $\Lambda$ be a maximal $R$-order in $\Sigma$. A finitely
generated left $\Lambda$-module $M$ is called a $\Lambda$-lattice
if it is a projective $R$-module.
\end{defn}

%-----------------------t5.2
\begin{theorem}[{\cite[page 565]{CR1}}] \label{t5.2}
Let the notations be the same as in Definition \ref{d5.1}, let
$\Lambda$ be a maximal $R$-order and $K$ be the quotient field of
$R$. Then

{\rm (1)} $\Lambda$ is a left and right hereditary ring.

{\rm (2)} Every left $\Lambda$-lattice is $\Lambda$-projective.

{\rm (3)} A left $\Lambda$-lattice $M$ is indecomposable if and
only if $KM$ is a simple module over $K\Lambda$.
\end{theorem}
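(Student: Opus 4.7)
The plan is to establish the three statements in order, with (1) as the technical core and (2), (3) flowing from it by standard embedding and splitting arguments.

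For (1), I would localize at each nonzero prime $\mathfrak{p}$ of $R$: being hereditary is a local property for a Noetherian ring, and $\Lambda_{\mathfrak{p}}$ is a maximal $R_{\mathfrak{p}}$-order in $\Sigma$, so the question reduces to the case where $R$ is a DVR. Passing further to the completion and descending back via faithful flatness puts us in the situation where $\Sigma$ decomposes as a product of matrix algebras $\prod_i M_{n_i}(D_i)$ over complete local division algebras, and the maximal order takes the explicit form $\Lambda \cong \prod_i M_{n_i}(\Delta_i)$ with $\Delta_i$ the unique maximal order in $D_i$. Each such $\Delta_i$ is a noncommutative local ring whose two-sided maximal ideal is principal, hence hereditary; Morita equivalence transfers this to $M_{n_i}(\Delta_i)$, and the product is hereditary componentwise. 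The argument is left--right symmetric.

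For (2), let $M$ be a $\Lambda$-lattice. Since $M$ is $R$-torsion-free and finitely generated, the canonical map $M \hookrightarrow KM$ is injective and $KM$ is a finitely generated $\Sigma$-module. Semisimplicity of $\Sigma$ yields an embedding $KM \hookrightarrow \Sigma^{n}$ for some $n$, and clearing denominators produces $c \in R \setminus \{0\}$ with $cM \subset \Lambda^{n}$, so $M$ embeds into the free $\Lambda$-module $\Lambda^{n}$. By (1), $\Lambda$ is hereditary, so every submodule of $\Lambda^{n}$ is projective, and thus $M$ is $\Lambda$-projective.

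For (3), the ``if'' direction is immediate: a nontrivial decomposition $M = M_1 \oplus M_2$ gives a nontrivial $\Sigma$-decomposition $KM = KM_1 \oplus KM_2$. For ``only if'', suppose $KM = V_1 \oplus V_2$ is a nontrivial $\Sigma$-decomposition, let $\pi \colon KM \to V_1$ be the projection, and set $M_1 := \pi(M) \subset V_1$ and $M_2 := M \cap V_2 \subset V_2$. Both are $\Lambda$-lattices, sitting in a short exact sequence $0 \to M_2 \to M \to M_1 \to 0$. By (2) applied to $M_1$, this sequence splits, producing a nontrivial direct sum decomposition of $M$ and contradicting indecomposability. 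The main obstacle is clearly part (1): the local structure theorem for maximal orders in a simple algebra over a complete DVR, which requires a genuine analysis of valuations on division algebras; once (1) is in hand, (2) and (3) are essentially formal.
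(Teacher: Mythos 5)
Your proposal is correct, and it reproduces the standard Curtis--Reiner argument that the paper simply cites (Theorem \ref{t5.2} is stated without proof, with a pointer to \cite[page 565]{CR1}): localize, complete, invoke the structure theorem $\hat{\Lambda}_{\mathfrak{p}} \cong \prod_i M_{n_i}(\Delta_i)$ to get (1), then embed lattices in free modules for (2), and split the exact sequence $0 \to M\cap V_2 \to M \to \pi(M) \to 0$ using (2) for (3). This is exactly the intended route, so there is nothing to compare beyond noting that you have correctly identified (1) as the sole nontrivial ingredient.
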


%----------------------t5.4
\begin{theorem}[{\cite[page 176, Theorem 18.4]{Re1}}] \label{t5.4}
Let $R$ be a DVR with quotient field $K$, $\Lambda$ be an
$R$-order such that $K\Lambda$ is a central simple algebra over a
field containing $K$. Then $\Lambda$ is a maximal order if and
only if $\Lambda$ is a hereditary ring and $\fn{rad}(\Lambda)$ is
a maximal two-sided ideal where $\fn{rad}(\Lambda)$ is the
Jacobson radical of $\Lambda$.
\end{theorem}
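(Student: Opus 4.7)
The plan is to argue locally by analyzing $\Lambda/\fn{rad}(\Lambda)$ and invoking the Wedderburn decomposition $A := K\Lambda \cong M_n(D)$ over the center $E = Z(A)$, in the spirit of Reiner's treatment of maximal orders over local Dedekind domains. Throughout, let $\pi$ denote a uniformizer of $R$ and set $J = \fn{rad}(\Lambda)$.

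For the forward direction, the hereditary conclusion is immediate from Theorem \ref{t5.2} (1). To show that $J$ is a maximal two-sided ideal, I would first argue that $\pi\Lambda \subseteq J$, which uses that $\Lambda$ is module-finite over the local ring $R$; consequently $\Lambda/J$ is a finite-dimensional semisimple algebra over the residue field $R/\pi R$. Let $S$ be the integral closure of $R$ in $E$; since $S$ is contained in every maximal $R$-order of $A$, $\Lambda$ is automatically a maximal $S$-order. Localizing at each maximal ideal of $S$, the structure theorem for maximal orders over a local Dedekind domain identifies $\Lambda$ with $M_n(\Omega)$, where $\Omega$ is the unique maximal order in the central division algebra $D$. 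A local division algebra has a unique maximal two-sided ideal, so the same holds for $\Lambda$, proving maximality of $J$.

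For the reverse direction, assume $\Lambda$ is hereditary and $J$ is a maximal two-sided ideal, and take an $R$-order $\Lambda'$ with $\Lambda \subseteq \Lambda' \subseteq A$. I would study the conductor $\mathfrak{c} = \{x \in \Lambda : x\Lambda' \subseteq \Lambda\}$, a nonzero two-sided ideal of $\Lambda$: if $\mathfrak{c} = \Lambda$ then $\Lambda' \subseteq \Lambda$ and we are done, so assume otherwise, in which case $\mathfrak{c} \subseteq J$ by maximality of $J$. Because $\Lambda$ is hereditary, $J$ and its powers are projective $\Lambda$-modules, and $J$ behaves as an invertible ideal in an appropriate Morita-theoretic sense. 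Combining these observations with the simplicity of $\Lambda/J$ (so that one can compute the idealizer $\fn{id}(J) := \{x \in A : xJ \subseteq J,\ Jx \subseteq J\}$) yields $\Lambda' \subseteq \fn{id}(J) = \Lambda$, contradicting $\mathfrak{c} \subsetneq \Lambda$.

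The main obstacle will be the identification $\fn{id}(J) = \Lambda$, together with the verification that any overorder $\Lambda'$ must lie inside this idealizer; this is where both the hereditary hypothesis (giving the projectivity of $J$, hence $\fn{End}_\Lambda(J) \cong \Lambda$ via left multiplication) and the maximality of $J$ (giving the simplicity of $\Lambda/J$ and forcing $J\Lambda' = J$ by a Nakayama-type argument applied to $\Lambda'/\Lambda$) are essential, and this is the technical heart of Reiner's argument.
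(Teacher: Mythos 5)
The paper does not prove this statement; it simply cites Reiner (\emph{Maximal Orders}, Theorem 18.4), so there is no in-paper argument to compare against, and your sketch is effectively being measured against that standard reference. Your forward direction invokes the structure theorem ``$\Lambda\cong M_n(\Omega)$, $\Omega$ the unique maximal order in $D$'' directly over $R$. That is Reiner's Theorem 17.3 and it is stated and proved over a \emph{complete} DVR. Over a non-complete DVR one must first pass to $\hat R\otimes_R\Lambda$ (Reiner 18.1--18.3: completion preserves and reflects maximality and hereditariness, $\fn{rad}\hat\Lambda=\widehat{\fn{rad}\Lambda}$, and $\Lambda/\fn{rad}\Lambda\cong\hat\Lambda/\fn{rad}\hat\Lambda$), apply 17.3 there, and descend; your sketch skips this reduction, and indeed ``the unique maximal order in $D$'' is false before completing (they are only all conjugate). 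Moreover, your step ``localize at each maximal ideal of $S$,'' the integral closure of $R$ in the center $E$, actually exposes a real defect in the theorem as stated in the paper: if $E\supsetneq K$ and the maximal ideal of $R$ splits in $S$, then $\Lambda/\fn{rad}\Lambda$ is a product of several simple rings, so $\fn{rad}\Lambda$ is \emph{not} maximal even though $\Lambda$ is a maximal $R$-order (take $R=\bm{Z}_{(5)}$, $A=\bm{Q}(i)$, $\Lambda=\bm{Z}_{(5)}[i]$). Reiner's hypothesis is that $A$ is central simple over $K$ \emph{itself}, which is how the result is used in Step~4 of Case~4 of the proof of Theorem~\ref{t1.4}; the phrase ``over a field containing $K$'' overclaims.

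For the converse, the idealizer strategy is the right instinct, but the two facts that carry it --- that $J=\fn{rad}\Lambda$ is invertible as a two-sided fractional ideal, and that every proper nonzero two-sided ideal of $\Lambda$ is a power of $J$ --- do not follow from ``$J$ is projective'' alone, as your last paragraph seems to hope; they are the theorem that a hereditary order is an Asano order, whose nonzero two-sided fractional ideals form a free abelian group on the maximal two-sided ideals (of which there is exactly one here by hypothesis). Once you have that, the cleanest finish is via the conductor rather than the idealizer: $\mathfrak{c}=\{x\in A:\Lambda'x\Lambda'\subseteq\Lambda\}$ is a nonzero two-sided $\Lambda$-ideal, hence $\mathfrak{c}=J^{k}$ for some $k\ge 0$; it is also a left $\Lambda'$-module, so $\Lambda'J^{k}\subseteq J^{k}$, and right-multiplying by $J^{-k}$ gives $\Lambda'\subseteq\Lambda$. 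You correctly flagged this as the technical heart, but the Asano-order machinery needs to be supplied explicitly; without it the Nakayama-type step ``$J\Lambda'=J$'' is unsupported.
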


\bigskip
\begin{proof}[Proof of $(1)\Rightarrow (2)$ of Theorem \ref{t1.4}] ~ \par
First we show that $T(\pi)=T^g(\pi)$.

Note that $\pi$ is a group such that all Sylow subgroups of $\pi$
are cyclic. Hence any coflabby $\pi$-lattice is invertible by
Theorem \ref{t2.7}. On the other hand, if $M$ is any
$\pi$-lattice, by Lemma \ref{l2.6} (2), there is an exact sequence
$0\to M\to C\to P\to 0$ where $C$ is coflabby (and hence
invertible) and $P$ is permutation. Apply Lemma \ref{l2.5} (1). We
get $[C]^{fl}=[M]^{fl}+[P]^{fl}=[M]^{fl}$.

In conclusion, for any $\pi$-lattice $M$ with $[M]\in T(\pi)$,
there is an invertible $\pi$-lattice $C$ such that $[M]=[C]$ in
$T(\pi)$ by Lemma \ref{l2.2}. Thus $T(\pi)=T^g(\pi)$.

\bigskip
We will use \cite[page 707, Theorem 4.2]{EM2} to show that
$C(\bm{Z}\pi)/C^q(\bm{Z}\pi)\simeq C(\Omega_{\bm{Z}\pi})$, i.e.\
$C^q(\bm{Z}\pi)=\widetilde{C}^q(\bm{Z}\pi)=\widetilde{C}(\bm{Z}\pi)$
(see Definition \ref{d2.13}).

We remark first that \cite[page 707, line 6]{EM2} contains a
misprint which should be corrected as follows: If $\pi=C\rtimes P$
where $C=\langle\sigma\rangle\simeq C_n$ is normal in $\pi$ and
$m\mid n$, the natural map $\mu_m:P\to
\fn{Aut}(C/\langle\sigma^m\rangle)$ is induced from the action of
$P$ on $C$ and $C/\langle \sigma^m\rangle$. Define
$D_m=\fn{Ker}(\mu_m)$. In particular, $P_n=\{\lambda\in P:\lambda
\sigma\lambda^{-1}=\sigma\}$.

Return to the group $\pi$ in (1) of Theorem \ref{t1.4}. We may
write $\pi=C\rtimes P$ where $C$ is cyclic, $P=\{1\}$, $C_2$ or
$C_4$. Thus the assumptions (c) or (d) of \cite[Theorem 4.2]{EM2}
are fulfilled. We conclude that
$C^q(\bm{Z}\pi)=\widetilde{C}^q(\bm{Z}\pi)=\widetilde{C}(\bm{Z}\pi)$.
Hence $C(\bm{Z}\pi)/C^q(\bm{Z}\pi)\simeq C(\Omega_{\bm{Z}\pi})$.

\bigskip
By Theorem \ref{t2.14}, to show that $T^g(\pi)\simeq
C(\bm{Z}\pi)/C^q(\bm{Z}\pi)$, it suffices to show that, for any
invertible $\pi$-lattice $M$, there is a projective ideal $\c{A}$
over $\bm{Z}\pi$ such that $[M]^{fl}=[\c{A}]^{fl}$. This is what
we will prove in the sequel. The ideas of the proof have been
explained at the beginning of this section. Once it is proved, the
proof of $(1)\Rightarrow (2)$ is finished.

\bigskip
\begin{Case}{1} $\pi=\langle\sigma\rangle\simeq C_n$. \end{Case}

W remark that this case is proved also in \cite[Theorem 2.10]{Sw6}
using some ideas in \cite{EM4}. In the following, we give a
slightly different proof.

\medskip
Step 1. For any $d\mid n$, note that
$\bm{Z}\pi/\langle\Phi_d(\sigma)\rangle \simeq \bm{Z}[\zeta_d]$
where $\zeta_d$ is some primitive $d$-th root of unity.

Moreover, by Theorem \ref{t4.1}, we find that
\[
[\bm{Z}\pi/\langle\Phi_d(\sigma)\rangle]^{fl}=\sum_{e\mid d} \mu\left(\frac{d}{e}\right)[\bm{Z}\pi/\langle \sigma^e-1\rangle]^{fl}.
\]

Since $\bm{Z}\pi/\langle\sigma^e-1\rangle\simeq \bm{Z}\pi/\pi'$
where $\pi'=\langle\sigma^e\rangle$, it follows that
$\bm{Z}\pi/\langle\sigma^e-1\rangle$ is a permutation
$\pi$-lattice and therefore
$[\bm{Z}\pi/\langle\sigma^e-1\rangle]^{fl}=0$. Thus
$[\bm{Z}\pi/\langle\Phi_d(\sigma)\rangle]^{fl}=0$.

\medskip
Step 2. Suppose that $M$ is an invertible $\pi$-lattice.

Apply Theorem \ref{t4.1}. We get
\[
[M]^{fl}=[M/(\sigma^n-1)M]^{fl}=\sum_{d\mid n} [(M/\Phi_d(\sigma)M)_0]^{fl}.
\]

Since $(M/\Phi_d(\sigma) M)_0$ is a torsion-free module over
$\bm{Z}\pi/\langle\Phi_d(\sigma)\rangle\simeq \bm{Z}[\zeta_d]$, it
follows that $(M/\Phi_d(\sigma)M)_0\simeq F_d\oplus J_d$ where
$F_d$ is a free module over
$\bm{Z}\pi/\langle\Phi_d(\sigma)\rangle$, $J_d$ is a non-zero
ideal of $\bm{Z}\pi/\langle \Phi_d(\sigma)\rangle$. Note that
$[F_d]^{fl}=0$ because
$[\bm{Z}\pi/\langle\Phi_d(\sigma)\rangle]^{fl}=0$ by Step 1.

Thus we get $[M]^{fl}=\sum_{d\mid n} [J_d]^{fl}$.

\medskip
Step 3. Each $J_d$ is locally isomorphic to
$\bm{Z}\pi/\langle\Phi_d(\sigma)\rangle$. Hence $J_d$ and
$\bm{Z}\pi/\langle \Phi_d(\sigma)\rangle$ belong to the same
genus. Apply Theorem \ref{t2.10}. We find a projective ideal
$\c{A}_d$ over $\bm{Z}\pi$ such that $J_d\oplus \bm{Z}\pi \simeq
\bm{Z}\pi/\langle\Phi_d(\sigma)\rangle \oplus\c{A}_d$. Hence
$[J_d]^{fl}=[\c{A}_d]^{fl}$ because
$[\bm{Z}\pi/\langle\Phi_d(\sigma)\rangle]^{fl}=0$ by Step 1.

\medskip
Step 4. By Swan's Theorem \cite[page 677, Corollary 32.12]{CR1},
$\bigoplus_{d\mid n} \c{A}_d\simeq F\oplus \c{A}$ for some
projective ideal $\c{A}$ over $\bm{Z}\pi$ and some free
$\bm{Z}\pi$-module $F$. Hence $[M]^{fl}=\sum_{d\mid n}
[J_d]^{fl}=\sum_{d\mid n}[\c{A}_d]^{fl}=[F\oplus
\c{A}]^{fl}=[\c{A}]^{fl}$ as expected.

\medskip
Step 5. We rephrase the above result as a form which will
be used in the sequel.

Let $\pi \simeq C_n$, $M$ be a $\pi$-lattice. Then there is a
projective ideal $\c{A}$ over $\bm{Z}\pi$ such that
$[M]^{fl}=[\c{A}]^{fl}$.

As before, by Lemma \ref{l2.6} (2), find an exact sequence $0\to
M\to N\to P\to 0$ where $N$ is coflabby and $P$ is permutation.
Apply Lemma \ref{l2.5} (1). We get
$[N]^{fl}=[M]^{fl}+[P]^{fl}=[M]^{fl}$.

By Theorem \ref{t2.7} any coflabby $\pi$-lattice is invertible;
thus $N$ is invertible. Since $N$ is invertible, it is possible to
find a projective ideal $\c{A}$ over $\bm{Z}\pi$ such that
$[N]^{fl}=[\c{A}]^{fl}$ from the above Steps 1,2,3 and 4. It
follows that $[M]^{fl}=[\c{A}]^{fl}$.

\bigskip
\begin{Case}{2}
$\pi=\langle\sigma,\tau:\sigma^m=\tau^2=1,\tau^{-1}\sigma\tau=\sigma^{-1}\rangle\simeq D_m$ where $m$ is an odd integer with $m\ge 3$.
\end{Case}

\medskip
\textit{Subcase 2.1} $m=p^c$ where $p$ is an odd prime number and $c\ge 1$.

Step 1. We use similar methods as in Case 1 and apply Theorem
\ref{t4.3}. However, if $d\mid p^c$,
$\bm{Z}\pi/\langle\Phi_d(\sigma)\rangle$ is no longer isomorphic
to $\bm{Z}[\zeta_d]$.

When $d=1$, $\bm{Z}\pi/\langle\Phi_d(\sigma)\rangle$ is isomorphic
to $\bm{Z}\pi'$ where $\pi' \simeq C_2$. Since
$(M^*/\Phi_d(\sigma)M^*)_0$ is a torsion-free module over
$\bm{Z}\pi/\langle \Phi_d(\sigma)\rangle$ (where $M$ is an
invertible $\pi$-lattice), we may apply Step 5 of Case 1. Thus
there is a projective ideal $\c{A}$ over $\bm{Z}\pi' (\simeq
\bm{Z}\pi/\langle\Phi_d(\sigma)\rangle)$ such that
$[(M/\Phi_d(\sigma)M)_0]^{fl}=[\c{A}]^{fl}$. Note that $\c{A}$ and
$\bm{Z}\pi'$ are in the same genus as $\pi'$-lattices and also as
$\pi$-lattices. Apply Theorem \ref{t2.10} to find a projective
ideal $\c{B}$ over $\bm{Z}\pi$ such that $\c{A} \oplus\bm{Z}\pi
\simeq\bm{Z}\pi' \oplus\c{B}$. Hence
$[(M/\Phi_d(\sigma)M)_0]^{fl}=[\c{B}]^{fl}$. Done.

When $d \mid p^c$ and $d > 1$, we will show that
$\bm{Z}\pi/\langle\Phi_d(\sigma)\rangle$ is isomorphic to a
twisted group ring.

\medskip
Step 2. For any $d\mid p^c$ such that $d > 1$, write $\Lambda_d
:=\bm{Z}\pi/\langle\Phi_d(\sigma)\rangle =S_d\circ H$ where
$S_d=\bm{Z}[\zeta_d]$, $H=\langle\tau\rangle$ and $\zeta_d\in S_d$
is the image of $\sigma$ in
$\bm{Z}\pi/\langle\Phi_d(\sigma)\rangle$. Define
$R_d=S_d^{\langle\tau\rangle}$; note that $\tau\cdot
\zeta_d=\zeta_d^{-1}$.

The only prime ideal which ramifies in $\bm{Z}[\zeta_{p^c}]$ over $\bm{Z}$ is the prime ideal lying over $p\bm{Z}$ and
$p\bm{Z}[\zeta_{p^c}]=Q^{p^{c-1}(p-1)}$ where $Q=(1-\zeta_{p^c})\bm{Z}[\zeta_{p^c}]$ (see, for example, \cite[page 96]{CR1}).
Thus, for any $d\mid p^c$ (with $d>1$), $S_d$ is tamely ramified over $R_d$.

By Theorem \ref{t3.6}, $\Lambda_d$ is a left hereditary ring and the ambiguous ideals are isomorphic to $IS_d$ or $IQ_d$
where $I$ is an ideal in $R_d$ and $Q_d=Q\cap S_d$
where $Q=(1-\zeta_{p^c})\bm{Z}[\zeta_{p^c}]$

Note that $IS_d$ and $S_d$ belong to the same genus because they are locally isomorphic;
similarly for $IQ_d$ and $Q_d$.

\medskip
Step 3. For any $d\mid p^c$, applying Theorem \ref{t4.3}, we find
that $[\bm{Z}\pi/\langle\Phi_d(\sigma)\rangle]^{fl}=0$ because
$\bm{Z}\pi/\langle\sigma^{e'}-1\rangle \simeq \bm{Z}[\pi/\pi']$
where $\pi'=\langle \sigma^{e'}\rangle$ if $e'\mid d$. In
conclusion, $[\Lambda_d]^{fl}=0$.

On the other hand, if $d\mid p^c$, define $\pi''=\langle
\sigma^d,\tau\rangle$ and $N=\bm{Z}[\pi/\pi'']$. It is not
difficult to verify that $(N^*/\Phi_d(\sigma)N^*)_0\simeq Q_d$ as
$\pi$-lattices.

Apply Theorem \ref{t4.3} to $N=\bm{Z}[\pi/\pi'']$. For any $e'\mid
d$, since $N/(\sigma^{e'}-1)N\allowbreak \simeq
\bm{Z}[\pi/\pi']\otimes_{\bm{Z}\pi} \bm{Z}[\pi/\pi'']$ (where
$\pi'=\langle\sigma^{e'}\rangle$) is a permutation $\pi$-lattice
by \cite[Lemma 5.3]{Sw6}, it follows that $[Q_d]^{fl}=0$.

\medskip
Step 4. For any $d\mid p^c$ (with $d > 1$), we will prove in Step
6 that $\Lambda_d\simeq S_d\oplus Q_d$ as $\Lambda_d$-modules, and
hence as $\pi$-lattices.

Assume the above claim. By Step 3,
$[\Lambda_d]^{fl}=[Q_d]^{fl}=0$, it follows that $[S_d]^{fl}=0$.

Let $M$ be an invertible $\pi$-lattice. Apply Theorem \ref{t4.3}
as in Step 3 of Case 1. Since $(M^*/\Phi_d(\sigma)M^*)_0$ is a
torsion-free module over $\bm{Z}\pi/\langle
\Phi_d(\sigma)\rangle=\Lambda_d$, we may apply results in Step 2.
Thus $(M^*/\Phi_d(\sigma)M^*)_0$ is a direct sum of $S_d I_i$
($1\le i\le u$) and $Q_dI_j$ ($1\le j\le v$) where $I_i$, $I_j$
are ideals in $R_d$. In particular, $(M^*/\Phi_d(\sigma)M^*)_0$
and $S_d^{(u)}\oplus (Q_d)^{(v)}$ are in the same genus. Applying
Theorem \ref{t2.10}, we find a projective ideal $\c{A}_d$ over
$\bm{Z}\pi$ such that $(M^*/\Phi_d(\sigma)M^*)_0\oplus
\bm{Z}\pi\simeq (S_d^{(u)}\oplus(Q_d)^{(v)})\oplus\c{A}_d$. Thus
$[(M^*/\Phi_d(\sigma)M^*)_0]^{fl}=[\c{A}_d]^{fl}$.

\medskip
Step 5. From Step 1 and Step 4, we find $[M]^{fl}=\sum_{d\mid n}
[\c{A}_d]^{fl}$ (the reader may compare the present situation with
Step 3 of Case 1).

Use the same argument as in Step 4 of Case 1. It is easy to find a
projective ideal $\c{A}$ over $\bm{Z}\pi$ satisfying
$[M]^{fl}=[\c{A}]^{fl}$.

\medskip
Step 6. It remains to show that $\Lambda_d\simeq S_d\oplus Q_d$.

Recall that $\Lambda_d=S_d\circ H=S_d+S_d u_\tau$ such that
$u_\tau^2=1$, $u_\tau\cdot \alpha=\tau(\alpha)\cdot u_\tau$ for
any $\alpha\in S_d$. Remember that $\zeta_d$ is the image of
$\sigma$ in $\Lambda_d$, and $S_d=\bm{Z}[\zeta_d]$,
$Q_d=(\zeta_d-\zeta_d^{-1})S_d$.

Define $w=\zeta_d+\zeta_d^2+\ldots+\zeta_d^{(d-1)/2} \in
\Lambda_d$ and $e=- \tau(w)(1+u_{\tau}) \in \Lambda_d$. Note that
$w$, $\tau(w)$ are units in $S_d$ and $1+w+\tau(w)=0$.

It is not difficult to show that $e^2=e$. It follows that
$\Lambda_d=\Lambda_d e \oplus \Lambda_d (1-e)$.

Note that $\Lambda_d e=\Lambda_d(1+u_{\tau})=S_d(1+u_{\tau})\simeq
S_d$. On the other hand, from $1-e=-(1-u_{\tau})w$, we find that
$\Lambda_d (1-e)=S_d(1-u_{\tau})w\simeq Q_d$. Hence the result.

\bigskip
\textit{Subcase 2.2} There exist distinct odd prime numbers $p_1$ and $p_2$ such that $p_1p_2\mid m$.

Step 1. Let $m'$ be an odd integer such that $p_1p_2\mid m'$ for two
distinct prime numbers. Then $\bm{Z}[\zeta_{m'}]$ is unramified
over $\bm{Z}[\zeta_{m'}+\zeta_{m'}^{-1}]$ by \cite[page 16,
Proposition 2.15]{Wa}. This observation will play a crucial role
in the subsequent proof.

\medskip
Step 2.
Let $M$ be any invertible $\pi$-lattice.
We will find a projective ideal $\c{A}$ over $\bm{Z}\pi$ such that $[M]^{fl}=[\c{A}]^{fl}$.
The proof is similar to that of Subcase 2.1.

Apply Theorem \ref{t4.3}. We may reduce the problem to the case
$(M^*/\Phi_d(\sigma)M^*)_0$ for any $d\mid m$.

If $d=1$, then Step 5 of Case 1 takes care of this situation (see
Step 1 of Subcase 2.1).

If $d > 1$, note that $(M^*/\Phi_d(\sigma)M^*)_0$ is a
torsion-free module over
$\Lambda_d:=\bm{Z}\pi/\langle\Phi_d(\sigma)\rangle=S_d\circ H$
where $S_d=\bm{Z}[\zeta_d]$, $H=\langle\tau\rangle$ and $\zeta_d$
is the image of $\sigma$ in
$\bm{Z}\pi/\langle\Phi_d(\sigma)\rangle$. The group $H$ acts on
$S_d$ by $\tau\cdot \zeta_d=\zeta_d^{-1}$. Define
$R_d=S_d^{\langle\tau\rangle}=\bm{Z}[\zeta_d+\zeta_d^{-1}]$.

Suppose that $d$ has two distinct prime divisors. Then $S_d/R_d$
is unramified by Step 1. Thus $\Lambda_d$ is hereditary; moreover,
$S_dI$ are the only ambiguous ideals of $S_d$ (where $I$ runs over
some ideals in $R_d$) by Theorem \ref{t3.6}. We conclude that
$(M^*/\Phi_d(\sigma)M^*)_0$ and $S_d^{(u)}$ belong to the same
genus. It can be proved that $[S_d]^{fl}=0$ as in Step 3 and Step
6 of Subcase 2.1. Thus we may find a projective ideal $\c{A}_d$
over $\bm{Z}\pi$ satisfying
$[(M^*/\Phi_d(\sigma)M^*)_0]^{fl}=[\c{A}_d]^{fl}$ as Step 4 of
Subcase 2.1.

On the other hand if $d=p^c$ for some odd prime number $p$ and
some $c\ge 1$, then the ideal $(1-\zeta_{p^c})S_d$ is the only
ramified prime ideal of $S_d$ over $R_d$; moreover, $S_d$ is
tamely ramified over $R_d$. The remaining proof is the same as
that of Subcase 2.1. Done.

\bigskip
\begin{Case}{3}
$\pi=\langle\sigma,\rho,\tau:\rho^{q^f}=\sigma^m=\tau^2=1,\tau^{-1}\rho\tau=\rho,\tau^{-1}\sigma\tau=\sigma^{-1},\sigma\rho=\rho\sigma\rangle\simeq
C_{q^f}\times D_m$ where $q$ is an odd prime number, $f\ge 1$, $m$
is an odd integer $\ge 3$, $\gcd\{q,m\}=1$, and
$(\bm{Z}/q^f\bm{Z})^{\times}=\langle \bar{p}\rangle$ for any prime
divisor $p$ of $m$.
\end{Case}

We will solve this case by induction on the order of the group
$\pi$.

\medskip
Step 1. Suppose that $M$ be an invertible $\pi$-lattice. Apply
Theorem \ref{t4.4}.

When $d \mid m$ and $d \neq m$, the lattice
$(M^*/\Phi_d(\sigma)M^*)_0$ may be regarded as a lattice over
$\bm{Z}\pi/\langle \sigma^d \rangle$. Replacing this lattice by a
coflabby lattice (applying Lemma \ref{l2.6}) and using the
induction hypothesis, we find a projective ideal $\c{B}_d$ over
$\bm{Z}\pi/\langle \sigma^d \rangle$ such that
$[(M^*/\Phi_d(\sigma)M^*)_0]^{fl}=[\c{B}_d]^{fl}$. By Theorem
\ref{t2.10}, find a projective ideal $\c{A}_d$ over $\bm{Z}\pi$
such that $\c{B}_d \oplus \bm{Z}\pi \simeq\bm{Z}\pi/\langle
\sigma^d \rangle \oplus \c{A}_d$. Hence
$[(M^*/\Phi_d(\sigma)M^*)_0]^{fl}=[\c{A}_d]^{fl}$. Similarly, for
the lattice $M^{\prime}$ in Theorem \ref{t4.4}, by induction on
the order of the group $\pi$, we also can find a projective ideal
$\c{A}^{\prime}$ over $\bm{Z}\pi$ such that
$[M^{\prime}]^{fl}=[\c{A}^{\prime}]^{fl}$.

By Theorem \ref{t4.4}, $[M]^{fl} = [N]^{fl} + [\c{A}]^{fl}$ where
$N:=(M^*/\langle \Phi_{q^f}(\rho), \Phi_m(\sigma)\rangle M^*)_0$
and $\c{A}$ is a projective ideal over $\bm{Z}\pi$.

In summary, we have reduced the question from $[M]^{fl}$ to
$[N]^{fl}$ where $N$ is a torsion-free module over $\Lambda$ with
$\Lambda:=\bm{Z}\pi/\langle \Phi_{q^f}(\rho),
\Phi_m(\sigma)\rangle$.

\medskip

Step 2. Denote by $\zeta_{q^f}$ and $\zeta_m$ the images of $\rho$
and $\sigma$ in $\Lambda$ respectively. Note that $\zeta_{q^f}$
(resp. $\zeta_m$) is a primitive $q^f$-th root of unity (resp. a
primitive $m$-th root of unity). In the sequel we will write
$\Lambda=\bm{Z}[\zeta_{q^f},\zeta_m,\tau]$. Note that
$\tau^{-1}\cdot \zeta_{q^f}\cdot \tau=\zeta_{q^f}$,
$\tau^{-1}\zeta_m\tau=\zeta_m^{-1}$ and $\bm{Z}[\zeta_{q^f},
\zeta_m]=\bm{Z}[\zeta_{q^f}]\otimes_{\bm{Z}}\bm{Z}[\zeta_m]$,
because $\gcd\{q,m \}=1$.

$\Lambda$ can be written as a twisted group ring: $\Lambda=S\circ
H$ where $S=\bm{Z}[\zeta_{q^f}, \zeta_m]$ and
$H=\langle\tau\rangle$. Define
$R=S^{\langle\tau\rangle}=\bm{Z}[\zeta_{q^f}][\zeta_m+\zeta_m^{-1}]$.

Assume that $p_1p_2\mid m$ for two distinct odd prime numbers (the
case $m=p^c$ for an odd prime number will be considered in Step
3).

Then $S/R$ is unramified as in Step 1 of Subcase 2.2. Hence
$\Lambda$ is hereditary; moreover, $SI$ are the only ambiguous
ideals of $S$ (where $I$ runs over some ideals in $R$) by Theorem
\ref{t3.6}. It follows that $N$ and $S^{(u)}$ are in the same
genus for some integer $u$. The remaining proof is the same as in
the proof of Step 2 of Subcase 2.2.

\medskip
Step 3. Suppose that $m=p^c$ for some odd prime number $p$ and
some $c\ge 1$.

Use the assumption that $(\bm{Z}/q^f \bm{Z})^{\times}=\langle
\bar{p}\rangle$. This assumption is equivalent to the fact that
$p\cdot\bm{Z}[\zeta_{q^f}]$ is a prime ideal in
$\bm{Z}[\zeta_{q^f}]$ (just think of the decomposition of $p\cdot
\bm{Z}[\zeta_{q^f}]$ and the Frobenius automorphism associated to
a prime ideal $Q$ of $\bm{Z}[\zeta_{q^f}]$ such that $p\in Q$).

Return to $\Lambda=S\circ H$ with $H=\langle\tau\rangle$,
$R=S^{\langle\tau\rangle}$.

We claim that the principal ideal $\langle
1-\zeta_{p^c}\rangle=(1-\zeta_{p^c}) S$ is a prime ideal. For,
$S/\langle
1-\zeta_{p^c}\rangle=\bm{Z}[\zeta_{p^c},\zeta_{q^f}]/\langle
1-\zeta_{p^c}\rangle \simeq \bm{F}_p[X]/\Phi_{q^f}(X)$ is a field
because $p\cdot\bm{Z}[\zeta_{q^f}]$ is a prime ideal (see, for
example, \cite[page 15, Proposition 2.14]{Wa}).

Note that $\Lambda$ is tamely ramified. Define $Q=\langle
1-\zeta_{p^c}\rangle$. As in Subcase 2.1, it can be shown that
$[\Lambda]^{fl}=[S]^{fl}=[Q]^{fl}=0$. The remaining proof is the
same and is omitted.

\medskip
Step 4. Finally we remark that we cannot replace the prime power
$q^f$ in the assumption by an odd integer $n$ with
$\gcd\{n,m\}=1$, because $\bm{F}_p[X]/\Phi_n(X)$ is not an
integral domain if $q_1q_2\mid n$ for two distinct odd prime
numbers $q_1$, $q_2$ different from $p$. Thus $\langle
1-\zeta_{p^c}\rangle$ is not a prime ideal of $S$ if
$\bm{F}_p[X]/\Phi_n(X)$ is not a field. See \cite[page 188]{EM4}
for further investigation.

\bigskip
\begin{Case}{4}
$\pi=\langle\sigma,\tau:\sigma^{2m}=\tau^4=1,\sigma^m=\tau^2,\tau^{-1}\sigma\tau=\sigma^{-1}\rangle\simeq
Q_{4m}$ where $m$ is an odd integer $\ge 3$ such that $p\equiv 3
\pmod{4}$ for any prime divisor $p$ of $m$.
\end{Case}

\medskip
Step 1. We follow the method in Cases 1,2,3.

Let $M$ be an invertible $\pi$-lattice.

Suppose $(\sigma^n-1) \cdot M=0$ for some $n \mid 2m$. If $n=1$ or
$2$, we may regard $M$ as a lattice over $\bm{Z}\pi''$ where
$\pi''=\pi/\langle \sigma^n \rangle$ and $\pi'' \simeq C_2$ or
$C_4$. Apply Case 1.

From now on, we assume that $n \ge 3$ and apply Theorem
\ref{t4.3}.

For any $d\mid n$, consider $(M^*/\Phi_d(\sigma)M^*)_0$ which is a
module over $\Lambda_d:=\bm{Z}\pi/\langle\Phi_d(\sigma)\rangle$.

If $d=1$, then $\Lambda_d \simeq \bm{Z}\pi'$ where $\pi' \simeq
C_2$. If $d=2$, then $\Lambda_d \simeq \bm{Z}[\sqrt{-1}]$; thus
$\bm{Z}\pi/\langle \sigma^2 \rangle \to \Lambda_d$ is surjective
(note that $\bm{Z}\pi/\langle \sigma^2 \rangle \simeq \bm{Z}\pi'$
where $\pi' \simeq C_4$). Apply the methods in Step 1 of Subcase
2.1 and Step 2 of Case 3 to settle these two cases.

Now consider the case $d \ge 3$.

Write $\zeta_d$ and $u_\tau$ to be the images of $\sigma$ and
$\tau$ in $\Lambda_d$ respectively. Define $S_d=\bm{Z}[\zeta_d]$.

If $d \mid m$, from $\tau^2=\sigma^m$, we find $u_\tau^2=1$ in
$S_d$. Thus $\Lambda_d \simeq S_d \circ H$ where
$H=\langle\tau'\rangle \simeq C_2$. The proof is the same as Case
2.

If $d\mid 2m$, $d \nmid m$ and $d\ge 3$, since $\tau^2=\sigma^m$,
$(\sigma^m)^2=\sigma^{2m}=1$, we find $u_\tau^2=-1$ in $S_d$.

Thus $\Lambda_d=S_d+S_d\cdot u_\tau$ where $u_\tau^2=-1$. Note
that $\Lambda_d$ may be regarded as a crossed-product order
$\Lambda_d=(S_d\circ H)_f$ where $H=\langle\tau'\rangle\simeq
C_2$, $f:H\times H\to U(S_d)$ is defined as
$f(1,1)=f(1,\tau')=f(\tau',1)=1$, $f(\tau',\tau')=-1$. Define
$R_d=S_d^{\langle\tau'\rangle}$ where $\tau'\cdot
\zeta_d=\zeta_d^{-1}$.

\medskip
Step 2. From now on till the end of the proof, we assume $d\mid
2m$, $d \nmid m$ and $d\ge 3$. In Step 3 and Step 4, we will show
that $\Lambda_d$ is a maximal $R_d$-order in $(L\circ H)_f$ where
$L=\bm{Q}(\zeta_d)$.

Assume this fact. Then $\Lambda_d$ is hereditary and any
$\Lambda_d$-lattice is a direct sum of indecomposable projective
$\Lambda_d$-modules by Theorem \ref{t5.2}.

Let $K$ be the quotient field of $R_d$. Since $K \Lambda_d$ is a
central simple $K$-algebra of degree 2, it is either a central
division $K$-algebra or is isomorphic to $M_2(K)$. We will show
that $K \Lambda_d$ is a central division $K$-algebra. Otherwise,
the $2$-cocycle $f$ is a $2$-coboundary. Equivalently, $-1$
belongs to the image of the norm map from $L$ to $K$. This implies
that there exist $a, b \in K$ satisfying that $-1= a^2+ (\zeta_d +
\zeta_d^{-1})ab + b^2$, which is impossible because $K$ is a real
field.

Since $K \Lambda_d$ is a division ring, it is obvious that
$\Lambda_d$ is an indecomposable projective $\Lambda_d$-module by
Theorem \ref{t5.2}.

We may prove $[\Lambda_d]^{fl}=0$ as in Case 2. By Theorem
\ref{t5.2}, $(M^*/\Phi_d(\sigma)M^*)_0$ is in the same genus as
$\Lambda_d^{(u)}$. Apply Theorem \ref{t2.10} as in Case 2. Thus
there is a projective ideal $\c{A}_d$ over $\bm{Z}\pi$ such that
$[(M^*/\Phi_d(\sigma)M^*)_0]^{fl}=[\c{A}_d]^{fl}$. The remaining
proof is the same as before.

\medskip
Step 3. We will show that $\Lambda_d$ is a maximal order if
$p_1p_2\mid d$ for two distinct odd prime numbers $p_1$ and $p_2$.
The situation when $d=p^c$ or $2p^c$ (where $p$ is an odd prime
number and $c\ge 1$) will be taken care of in Step 4.

Since $p_1p_2\mid d$, $S_d$ is unramified over $R_d$. Thus $S_d$
is a Galois extension of $R_d$ relative to
$H=\langle\tau'\rangle\simeq C_2$ in the sense of \cite[pages
395--402]{AG2}. Thus $\Lambda_d=(S_d\circ H)_f$ is a central
separable $R_d$-algebra by \cite[page 402, Theorem A.12]{AG2}.
Hence $\Lambda_d$ is a maximal order in $K\Lambda_d$ (where $K$ is
the quotient field of $R_d$) by \cite[page 386, Proposition
7.1]{AG2}.

\medskip
Step 4. When $d=2p^c$ for some odd prime number $p$, we will show
that $\Lambda_d$ is a maximal order.

Note that $\zeta_{2p^c}=-\zeta_{p^c}$. For simplicity, we write
$\Lambda$ for $\Lambda_d$ throughout this step, i.e.\
$\Lambda=S+S\cdot u_\tau$ where $S=\bm{Z}[\zeta_{p^c}]$,
$\tau\cdot \zeta_{p^c}=\zeta_{p^c}^{-1}$, $u_\tau^2=-1$ and
$R=S^{\langle\tau\rangle}=\bm{Z}[\zeta_{p^c}+\zeta_{p^c}^{-1}]$.

Note that to be a maximal order is a local property \cite[page
132, Corollary 11.2]{Re1}. We will check it at all localizations
of $R$.

Let $P'$ be a prime ideal of $R$ such that $p\notin P'$. Write
$\Lambda_{P'}$, $S_{P'}$ and $R_{P'}$ for the localizations of
$\Lambda$, $S$ and $R$ at $P'$. It follows that $S_{P'}$ is
unramified over $R_{P'}$. Hence $\Lambda_{P'}$ is a central
separable algebra as in Step 3. Thus $\Lambda_{P'}$ is a maximal
order.

On the other hand, let $P$ be the prime ideal of $R$ with $p\in
P$. We will show that $\Lambda_P$ is also a maximal order.

Since $S_P$ is tamely ramified over $R_P$, $\Lambda_p$ is a
hereditary order by Theorem \ref{t3.2}.

We will apply Theorem \ref{t5.4} to show that $\Lambda_P$ is a
maximal order. Let $K$ be the quotient field of $R$. It is clear
that $K\Lambda=K\Lambda_P$ is a central simple $K$-algebra. It
remains to verify that $\fn{rad}(\Lambda_P)$ is a maximal
two-sided ideal of $\Lambda_P$ where $\fn{rad}(\Lambda_P)$ is the
Jacobson radical of $\Lambda_P$.

Consider $R_P\subset S_P\subset \Lambda_P$. It is clear that
$\fn{rad}(S_P)=(1-\zeta_{p^c}) S_P$. Thus
$(1-\zeta_{p^c})(1-\zeta_{p^c}^{-1}) \in R_P \cap \fn{rad}(S_P) =
\fn{rad}(R_P)$, since $S_P$ is integral over $R_P$. It follows
that $(1-\zeta_{p^c})(1-\zeta_{p^c}^{-1})\in \fn{rad}(\Lambda_P)$
because $\fn{rad}(R_P)\Lambda_P \subset \fn{rad}(\Lambda_P)$ by
\cite[page 82, Theorem 6.15]{Re1}. On the other hand,
$(1-\zeta_{p^c})(1-\zeta_{p^c}^{-1})=-\zeta_{p^c}^{-1}(1-\zeta_{p^c})^2$.
We find that $(1-\zeta_{p^c})^2 \in \fn{rad}(\Lambda_P)$. Hence
$1-\zeta_{p^c} \in \fn{rad}(\Lambda_P)$ (because, for any simple
module $M$ over $\Lambda_P$, $(1-\zeta_{p^c})^2 M=0$ implies
$(1-\zeta_{p^c}) M=0$). We conclude that the two-sided ideal
$\langle 1-\zeta_{p^c}\rangle=(1-\zeta_{p^c})\Lambda_P$ is
contained in $\fn{rad}(\Lambda_P)$.

We will show that $\Lambda_P/\langle 1-\zeta_{p^c}\rangle$ is a
field. Once it is proved, then $\langle
1-\zeta_{p^c}\rangle=\fn{rad}(\Lambda_P)$ and
$\Lambda_P/\fn{rad}(\Lambda_P)$ is a field. And therefore
$\fn{rad}(\Lambda_P)$ is a maximal two-sided ideal of $\Lambda_P$.

It remains to show that $\Lambda_P/\langle 1-\zeta_{p^c}\rangle$ is a field.
Note that $p\equiv 3 \pmod{4}$ by assumption;
this is equivalent to $\bm{F}_p[X]/\langle X^2+1\rangle$ is a field.

Note that $p\in \langle 1-\zeta_{p^c}\rangle$.
We have $u_\tau\cdot \zeta_{p^c}\cdot u_\tau^{-1}=\zeta_{p^c}^{-1}$.
However, in $\Lambda_d/\langle 1-\zeta_{p^c}\rangle$ we find that $\bar{u}_\tau\cdot \bar{\zeta}_{p^c}=\bar{\zeta}_{p^c}^{-1}\bar{u}_\tau %
=\bar{\zeta}_{p^c}\cdot \bar{u}_\tau+(\bar{\zeta}_{p^c}^{-1}-\bar{\zeta}_{p^c})\bar{u}_\tau=\bar{\zeta}_{p^c}\cdot\bar{u}_\tau$ because
$\zeta_{p^c}^{-1}-\zeta_{p^c}=\zeta_{p^c}^{-1}(1+\zeta_{p^c})(1-\zeta_{p^c})\in \langle 1-\zeta_{p^c}\rangle$.

Thus $\Lambda_P/\langle 1-\zeta_{p^c}\rangle$ is a commutative
ring; furthermore, $\Lambda_P/\langle 1-\zeta_{p^c}\rangle \simeq
\bm{F}_p[\zeta_{p^c},u_\tau]/\langle
1-\zeta_{p^c}\rangle\allowbreak\simeq \bm{F}_p[X]/(X^2+1)$ which
is a field by assumption.

A final remark. Since $\Lambda_P/\fn{rad}(\Lambda_P)$ is a field,
it follows that $K\Lambda=K\Lambda_P$ is a division ring by
\cite[page 189, Theorem 21.6]{Re1}, which has been proved in Step
2.
\end{proof}

\bigskip
We explain briefly the strategy of the proof of Theorem
\ref{t1.4}. In \cite{EM4}, this theorem is proved by showing
$(2)\Rightarrow (3)\Rightarrow (1') \Rightarrow (1)$ where $(1')$
is given as follows.

\begin{enumerate}
\item[$(1')$] $\pi$ is isomorphic to
\begin{enumerate}
\item[(i$'$)] a cyclic group, or \item[(ii$'$)] $C\times H$ where
$C=\langle \sigma\rangle$ is a cyclic group of order $n$,
$H=\langle\rho,\tau:\rho^m=\tau^{2^d}=1$,
$\tau^{-1}\rho\tau=\rho^{-1}\rangle$ such that $n$ and $m$ are odd
positive integers, $\gcd\{n,m\}=1$, $d\ge 1$, and, for any prime
divisor $p$ of $m$, the principal ideal $p\bm{Z}[\zeta_{n\cdot
2^d}]$ is a prime ideal in $\bm{Z}[\zeta_{n\cdot 2^d}]$.
\end{enumerate}
\end{enumerate}

The implication of $(2)\Rightarrow (3)$ is easy, because
$C(\bm{Z}\pi)\to C(\Omega_{\bm{Z}\pi})$ is surjective (see
Definition \ref{d2.12}).

The proof of $(3)\Rightarrow (1')$ was given in \cite[pages
97--98, pages 188--189]{EM4}. We will not repeat the proof.

For the remaining part of this section, we will give a proof of
$(1)\Leftrightarrow (1')$.

\bigskip
\begin{proof}[Proof of $(1)\Leftrightarrow (1')$] ~ \par
For the proof of $(1)\Rightarrow (1')$, if $\pi\simeq C_n$ or
$D_m$, it is trivial to see that $\pi$ belongs to the class
described in $(1')$. If
$\pi=Q_{4m}=\langle\sigma,\tau:\sigma^{2m}=\tau^4=1,\sigma^m=\tau^2,\tau^{-1}\sigma\tau=\sigma^{-1}\rangle$,
define $\rho=\sigma^2$. Then
$\pi=\langle\rho,\tau:\rho^m=\tau^4=1,\tau^{-1}\rho\tau=\rho^{-1}\rangle$.
The assumption on the prime divisor $p$ of $m$ in (iv) of (1) is
equivalent to $p\cdot \bm{Z}[\sqrt{-1}]$ is a prime ideal in
$\bm{Z}[\sqrt{-1}]$ (we have $d=2$, $n=1$ for (ii$'$) of $(1')$ in
this situation).

Now consider the case $\pi=C_{q^f}\times D_m$ in (iii) of (1).
Take $n=q^f$ in (ii$'$) of $(1')$. If $p$ is a prime divisor of
$m$, since $p\ne q$, the prime number $p$ is unramified in
$\bm{Q}(\zeta_{q^f})$. Let $P$ be a prime ideal of
$\bm{Z}[\zeta_{q^f}]$ with $p\in P$. Since $(\bm{Z}/q^f
\bm{Z})^{\times}=\langle\bar{p}\rangle$, it follows that
$\fn{Gal}(\bm{Q}(\zeta_{q^f})/\bm{Q})=\langle\varphi\rangle$ where
$\varphi(\zeta_{q^f})=(\zeta_{q^f})^p$. On the other hand,
$\varphi$ induces the Frobenius automorphism of
$\bm{Z}[\zeta_{q^f}]/P$ over $\bm{Z}/p\bm{Z}$. It follows that
$P=p\bm{Z}[\zeta_{q^f}]$ and thus $p$ remains prime in
$\bm{Z}[\zeta_{q^f}]$ as expected.

For the proof of $(1')\Rightarrow (1)$, we first note that, if $n$
is odd and $d\ge 1$, then $(\bm{Z}/n\cdot 2^d \bm{Z})^{\times}$ is
a cyclic group if and only if $(n,2^d)=(1,2)$, (1,4), or $(q^f,2)$
where $q$ is some odd prime number.

On the other hand, note that $\fn{Gal}(\bm{Q}(\zeta_{n\cdot
2^d})/\bm{Q})\simeq (\bm{Z}/n\cdot 2^d \bm{Z})^{\times}$. If a
prime number $p$ remains prime in $\bm{Z}[\zeta_{n\cdot 2^d}]$,
then $[\bm{Z}[\zeta_{n\cdot 2^d}]/p\bm{Z}[\zeta_{n\cdot
2^d}]:\bm{Z}/p\bm{Z}]=|(\bm{Z}/n\cdot 2^d \bm{Z})^{\times}|$. Thus
the Frobenius automorphism of $p$ generates
$\fn{Gal}(\bm{Q}(\zeta_{n\cdot 2d})/\bm{Q})$. Hence
$(\bm{Z}/n\cdot 2^d \bm{Z})^{\times}$ is cyclic and
$(n,2^d)=(1,2)$, (1,4), or $(q^f,2)$.

When $(n,2^d)=(1,2)$, the group $\pi$ in (ii$'$) of $(1')$ is
isomorphic to $D_m$.

When $(n,2^d)=(1,4)$, the group $\pi$ in (ii$'$) of $(1')$ is
isomorphic to $Q_{4m}$ such that $p\equiv 3 \pmod{4}$ for any
prime divisor $p$ of $m$.

When $(n,2^d)=(q^f,2)$, the group $\pi$ in (ii$'$) of $(1')$ is
isomorphic to $C_{q^f}\times D_m$. Since every prime divisor $p$
of $m$ remains prime in $\bm{Z}[\zeta_{q^f}]$, write
$P=p\bm{Z}[\zeta_{q^f}]$ the prime ideal of $\bm{Z}[\zeta_{q^f}]$.
Then the Frobenius automorphism of $\bm{Z}[\zeta_{q^f}]/P$
generates $\fn{Gal}(\bm{Q}(\zeta_{q^f})/\bm{Q})\simeq
(\bm{Z}/q^f\bm{Z})^{\times}$. Thus
$(\bm{Z}/q^f\bm{Z})^{\times}=\langle\bar{p}\rangle$.
\end{proof}

%------------------------------------------S6
\section{The maximal orders}

Because of Theorem \ref{t1.4}, we will determine
$C(\Omega_{\bm{Z}\pi})$ when $\pi=C_n$, $D_n$, $C_n\times D_m$,
$Q_{4n}$. Given a finite group $\pi$, there may be more than one
maximal order containing $\bm{Z}\pi$. For our purpose, it is
enough to select just one of them.

%------------------d6.1
\begin{defn} \label{d6.10}
Let $K$ be an algebraic number field, $A$ be a cental simple
$K$-algebra, $v$ be a place of $K$ (finite or infinite). We say
that $A$ ramifies at $v$ if $[K_v\otimes_K A]\ne 0$ in the Brauer
group of $K_v$ where $K_v$ is the completion of $K$ at $v$
\cite[page 272]{Re1}.
\end{defn}

%------------------d6.2
\begin{defn} \label{d6.11}
Let $R$ be a Dedekind domain whose quotient field $K$ is an
algebraic number field, and let $A$ be a central simple
$K$-algebra. Define $I(R)$ to be the multiplicative group of fractional $R$-ideals in $K$, and define $P(R)=\{R\alpha:\alpha\in
K\backslash \{0\}\}$. Recall that $C(R)=I(R)/P(R)$.

Define $S$ to be the set of all infinite places of $K$ ramified in
$A$. Define $P_A(R)=\{R\alpha:\alpha\in K\backslash \{0\},
\alpha_v>0$ for all $v$ in $S\}$, the principal ray group (mod
$S$). The ray class group (mod $S$), denoted by $C_A(R)$, is
defined as $C_A(R)=I(R)/P_A(R)$ \cite[page 309]{Re1}. If $S$ is
the empty set, then $C(R) \simeq C_A(R)$.

In general, the kernel of the surjective map $C_A (R) \to C(R)$ is
the group $P(R)/P_A (R)$ which may be computed by the exact
sequence $U(R) \to D \to P(R)/P_A(R) \to 0$, where $U(R)$ is the
group of units of $R$, $D= \prod_{v \in S}
K_v^{\times}/(K_v^{\times})^{+}$; note that $K_v^{\times}$ is the
multiplicative group of non-zero elements in $K_v$,
$(K_v^{\times})^{+}$ is the group of positive elements of
$K_v^{\times}$, and $K_v^{\times}/(K_v^{\times})^{+} \simeq
\bm{Z}/2\bm{Z}$. The map $U(R) \to D$ is defined by $\alpha \mapsto
(\ldots, \bar{\alpha}_v, \ldots)$ where $\alpha_v$ is the image of
$\alpha$ in $K_v$; the map of $D$ to $P(R)/P_A(R)$ can be found in
\cite[page 139]{Sw3}.

\end{defn}

%------------------t6.3
\begin{theorem}[{Swan \cite[page 313, Theorem 35.14]{Re1}}] \label{t6.12}
Let the notations be the same as in Definition \ref{d6.11}. Let
$\Lambda$ be a maximal $R$-order in $A$. Then $C(\Lambda)\simeq
C_A(R)$ under the reduced norm map.
\end{theorem}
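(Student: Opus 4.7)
The plan is to construct an explicit isomorphism $\Phi : C(\Lambda) \to C_A(R)$ via the reduced norm, deducing bijectivity from the Hasse--Schilling--Maass theorem on the image of the reduced norm together with a local-to-global patching argument. First I would define the reduced norm on locally free $\Lambda$-ideals: since $\Lambda$ is a maximal $R$-order, each localization $\Lambda_P$ at a non-zero prime $P$ of $R$ is a maximal $R_P$-order in $A_P$, and such a local maximal order is a principal ideal ring. Thus for any locally free rank-one $\Lambda$-module $\c{A}$, each $\c{A}_P$ equals $\Lambda_P \alpha_P$ for some $\alpha_P \in A_P^{\times}$; I set $\fn{nr}(\c{A})_P := R_P \cdot \fn{nr}_{A_P/K_P}(\alpha_P)$, and these local data glue to a well-defined fractional $R$-ideal $\fn{nr}(\c{A}) \in I(R)$. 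The assignment $[\c{A}] - [\Lambda] \mapsto [\fn{nr}(\c{A})]$ then yields a homomorphism $C(\Lambda) \to I(R)$.

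Next I would verify that $\Phi$ factors through $C_A(R)$ and is surjective. If $\c{A} = \Lambda u$ with $u \in A^{\times}$, then $\fn{nr}(\c{A}) = R \cdot \fn{nr}(u)$; for each $v \in S$ the completion $A_v$ is a quaternion division $K_v$-algebra with $K_v \simeq \bm{R}$ and its reduced norm is positive-definite, so $\fn{nr}(u)_v > 0$ for every $v \in S$. Hence $R \fn{nr}(u) \in P_A(R)$ and $\Phi$ descends to a homomorphism $C(\Lambda) \to C_A(R)$. Surjectivity is established by constructing, for a given fractional ideal $\mathfrak{a} \in I(R)$, a locally free $\Lambda$-ideal $\c{A}$ with $\fn{nr}(\c{A}) = \mathfrak{a}$: at each non-zero prime $P$ choose $\alpha_P \in A_P^{\times}$ with $R_P \fn{nr}(\alpha_P) = \mathfrak{a} R_P$ (possible because $\fn{nr} : A_P^{\times} \to K_P^{\times}$ is surjective at every finite place) and $\alpha_P \in \Lambda_P^{\times}$ for almost all $P$, then patch to obtain a global $\c{A}$.

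The heart of the proof, and the main obstacle, is injectivity, which rests on the deep Hasse--Schilling--Maass theorem: the image of the reduced norm $\fn{nr} : A^{\times} \to K^{\times}$ equals precisely the subgroup of $\alpha \in K^{\times}$ that are positive at every infinite place $v \in S$. If $\Phi([\c{A}] - [\Lambda]) = 0$ then $\fn{nr}(\c{A}) = R \alpha$ with $\alpha$ positive at every $v \in S$, so by Hasse--Schilling--Maass there exists $u \in A^{\times}$ with $\fn{nr}(u) = \alpha$; then $\c{A}$ and $\Lambda u$ have equal reduced norms. By Eichler's stable-isomorphism theorem (two locally free rank-one $\Lambda$-modules with the same reduced norm define the same class in $C(\Lambda)$), we conclude $[\c{A}] = [\Lambda u] = 0$ in $C(\Lambda)$. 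The combined use of Hasse--Schilling--Maass and Eichler's stable-isomorphism theorem is where essentially all the arithmetic depth resides; the remaining bookkeeping is a routine local-global assembly.
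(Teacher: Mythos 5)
The paper supplies no proof of this theorem --- it is cited directly to Reiner \cite[Theorem 35.14]{Re1} --- so there is no internal argument to compare against. Your blind reconstruction follows the standard textbook route: define $\fn{nr}(\c{A})$ locally by principality of local maximal orders and glue it to a fractional $R$-ideal; check that $\Phi$ sends principal ideals $\Lambda u$ into $P_A(R)$ by positivity of the reduced norm at the ramified real places; obtain surjectivity from local surjectivity of the reduced norm together with adelic patching; and obtain injectivity from Hasse--Schilling--Maass plus the assertion that two locally free rank-one $\Lambda$-modules with the same reduced norm are stably isomorphic. Two small caveats. First, the attribution of the injectivity lemma is off: the ``equal reduced norms implies stable isomorphism'' statement is a consequence of the strong approximation theorem for the norm-one group (Reiner \S 34--35, due to Eichler and Swan), whereas ``Eichler's theorem'' more commonly names the assertion that stable isomorphism of locally free ideals implies genuine isomorphism when $A$ satisfies the Eichler condition --- a different fact, one that fails for totally definite quaternion algebras, while the lemma you actually need holds without that restriction (and it must, since the $Q_{4n}$ case of the present paper produces totally definite quaternion algebras). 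Second, local principality of a maximal order is proved over completions, so one should make explicit the routine passage from completions back to localizations in the definition of $\fn{nr}(\c{A})$. Modulo these naming and bookkeeping points, your outline is sound and agrees with the cited proof.
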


%------------------t6.4
\begin{theorem} \label{t6.13}
Let $\pi$ be a group, $\Omega_{\bm{Z}\pi}$ be a maximal
$\bm{Z}$-order in $\bm{Q}\pi$ containing $\bm{Z}\pi$.
\begin{enumerate}
\item[{\rm (1)}] If $\pi=C_n$, then
\[
C(\Omega_{\bm{Z}\pi})\simeq \oplus_{d\mid n} C(\bm{Z}[\zeta_d]).
\]
\item[{\rm (2)}] If $\pi=D_n$ where $n$ is an integer $\ge 2$,
then
\[
C(\Omega_{\bm{Z}\pi})\simeq \oplus_{d\mid n}
C(\bm{Z}[\zeta_d+\zeta_d^{-1}]).
\]
\item[{\rm (3)}] If $\pi=C_n\times D_m$ where $\gcd\{n,m\}=1$ and
$m$ is an integer $\ge 2$. For any $d \mid nm$, write $d=d_1d_2$
where $d_1 \mid n$, $d_2 \mid m$.

If $m$ is odd, then
\[
C(\Omega_{\bm{Z}\pi})\simeq (\oplus_{d \mid n}
C(\bm{Z}[\zeta_d])^{(2)})\oplus (\oplus_{d \mid nm \atop d\nmid n}
C(\bm{Z}[\zeta_{d_1},\zeta_{d_2}+\zeta_{d_2}^{-1}])).
\]

If $m$ is even, then
\[
C(\Omega_{\bm{Z}\pi})\simeq (\oplus_{d \mid n}
C(\bm{Z}[\zeta_d])^{(4)})\oplus (\oplus_{d \mid nm\atop d\nmid
n,d\ge 3} C(\bm{Z}[\zeta_{d_1},\zeta_{d_2}+\zeta_{d_2}^{-1}])).
\]
\item[{\rm (4)}] If $\pi=Q_{4n}$ where $n$ is an integer $\ge 2$,
then
\[
C(\Omega_{\bm{Z}\pi})\simeq \oplus_{d\mid n}
(C(\bm{Z}[\zeta_d+\zeta_d^{-1}]))\oplus (\oplus_{d\mid 2n \atop
d\nmid n, d \ge 3} C_{A_d}(R_d))
\]
where $C_{A_d}(R_d)$ is defined in Definition \ref{d6.11} with
$R_d=\bm{Z}[\zeta_d+\zeta_d^{-1}]$,
$K_d=\bm{Q}(\zeta_d+\zeta_d^{-1})$, $L_d=\bm{Q}(\zeta_d)$, and
$A_d$ is the central simple $K_d$-algebra defined by $A_d=L_d +
L_d \cdot u$ with $u^2=-1$, $u \alpha = \tau(\alpha) u$ for any
$\alpha \in L_d$, $\tau$ acts on $L_d$ by $\tau \cdot
\zeta_d=\zeta_d^{-1}$.
\end{enumerate}
\end{theorem}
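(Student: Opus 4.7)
The plan is, for each of the four groups $\pi$ in~(1)--(4), to decompose the rational group algebra $\bm{Q}\pi\simeq\prod_i A_i$ as a product of simple $\bm{Q}$-algebras (Wedderburn), take $\Omega_{\bm{Z}\pi}$ to be the corresponding product $\prod_i\Lambda_i$ of maximal $\bm{Z}$-orders in the individual $A_i$, and read off $C(\Omega_{\bm{Z}\pi})\simeq\bigoplus_i C(\Lambda_i)$. Each $C(\Lambda_i)$ will then be computed from Swan's isomorphism $C(\Lambda_i)\simeq C_{A_i}(R_i)$ (Theorem \ref{t6.12}) combined with Morita invariance of the locally free class group; when $A_i$ is a matrix algebra over a number field in which no infinite place ramifies, the ray class group collapses to an ordinary ideal class group.

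For (1), the classical decomposition $\bm{Q}C_n\simeq\prod_{d\mid n}\bm{Q}(\zeta_d)$ reduces everything at once to ordinary class groups $C(\bm{Z}[\zeta_d])$. For (2), I would decompose $\bm{Q}D_n$ by letting $\sigma$ map to $\zeta_d$ for $d\mid n$: the contributions at $d=1$ (and $d=2$ when $n$ is even) are one-dimensional and contribute copies of $\bm{Q}$ whose class groups vanish, while for $d\geq 3$ the factor is the crossed product $\bm{Q}(\zeta_d)\circ\langle\tau\rangle$ with trivial $2$-cocycle and $\tau\zeta_d\tau^{-1}=\zeta_d^{-1}$. The trivial cocycle is a coboundary, so this crossed product splits into $M_2(\bm{Q}(\zeta_d+\zeta_d^{-1}))$, and Morita equivalence together with Theorem \ref{t6.12} deliver $C(\bm{Z}[\zeta_d+\zeta_d^{-1}])$. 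Case (3) follows from (1) and (2) by tensoring: the coprimality $\gcd(n,m)=1$ gives $\bm{Q}(C_n\times D_m)\simeq\bm{Q}C_n\otimes\bm{Q}D_m$, and linear disjointness of $\bm{Q}(\zeta_{d_1})$ and $\bm{Q}(\zeta_{d_2}+\zeta_{d_2}^{-1})$ over $\bm{Q}$ makes the two-dimensional factors $M_2(\bm{Q}(\zeta_{d_1},\zeta_{d_2}+\zeta_{d_2}^{-1}))$; the one-dimensional factors collect into $\bigoplus_{d\mid n}C(\bm{Z}[\zeta_d])^{(a)}$ with multiplicity $a$ equal to the number of rational one-dimensional characters of $D_m$, namely $2$ or $4$ according to the parity of $m$.

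Case (4) is where the analysis genuinely departs from the preceding cases. For each $d\mid 2n$, the relation $\tau^2=\sigma^n$ forces the Wedderburn factor with $\sigma\mapsto\zeta_d$ to have $\tau^2=\zeta_d^n$, which equals $1$ if $d\mid n$ and $-1$ if $d\mid 2n$ but $d\nmid n$. In the former case we recover the matrix factor $M_2(\bm{Q}(\zeta_d+\zeta_d^{-1}))$ already treated in (2); in the latter we obtain the twisted crossed-product algebra $A_d=L_d+L_du$ with $u^2=-1$ and $u\alpha u^{-1}=\tau(\alpha)$, exactly as in the statement. The analysis of Case 4 of the proof of Theorem \ref{t1.4} already shows $A_d$ is a central division algebra over $K_d=\bm{Q}(\zeta_d+\zeta_d^{-1})$ for $d\geq 3$. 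I would then verify that $A_d$ is ramified at every infinite place of the totally real field $K_d$: setting $\alpha=\zeta_d-\zeta_d^{-1}$, one computes $\alpha^2=(\zeta_d+\zeta_d^{-1})^2-4$, which is strictly negative at every real embedding when $d\geq 3$, while $u^2=-1$ and $\alpha u=-u\alpha$; hence $A_d\otimes_{K_d}(K_d)_v$ is the standard quaternion division algebra $\bm{H}$ at every infinite place $v$. Theorem \ref{t6.12} then identifies the contribution of $A_d$ with the ray class group $C_{A_d}(R_d)$ modulo all infinite places of $K_d$, and assembling the summands along the $d\mid n$ versus $d\mid 2n,\,d\nmid n$ dichotomy yields the formula. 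The principal obstacle is precisely this ramification analysis for the quaternion algebras $A_d$, since it is the reason ray class groups, rather than ordinary ones, appear in~(4) and what makes Case~(4) strictly more intricate than the first three cases.
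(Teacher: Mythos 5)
Your proposal follows essentially the same approach as the paper's proof: decompose $\bm{Q}\pi$ into its Wedderburn factors, take $\Omega_{\bm{Z}\pi}$ as the corresponding product of maximal orders, and compute each $C(\Lambda_i)$ via Swan's Theorem (Theorem~\ref{t6.12}) together with the observation that the crossed product $L_d\circ\langle\tau\rangle$ with trivial cocycle is $M_2(K_d)$ while the twisted version in Case~(4) is a totally definite quaternion division algebra. Your two small variants --- deriving Case~(3) by tensoring the simple factors of $\bm{Q}C_n$ and $\bm{Q}D_m$ and using linear disjointness from $\gcd(n,m)=1$, instead of the paper's direct decomposition by the ideals $\langle\Phi_d(\sigma)\rangle$ with $\sigma=\sigma'\rho$, and making Case~(4)'s ramification at infinity explicit via $\alpha=\zeta_d-\zeta_d^{-1}$, $\alpha^2=(\zeta_d+\zeta_d^{-1})^2-4<0$ under every real embedding, rather than citing the norm-form argument from the proof of Theorem~\ref{t1.4} --- are both correct and if anything slightly more transparent than the paper's presentation.
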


\begin{proof}
Case 1. $\pi=C_n$.

Write $\bm{Z}\pi=\bm{Z}[X]/\langle X^n-1\rangle$. Note that
$\bm{Z}[X]/\langle X^n-1\rangle \hookrightarrow \prod_{d\mid n}
\bm{Z}[X]/\langle\Phi_d(X)\rangle \hookrightarrow
\bm{Q}[X]/\langle X^n-1\rangle$. Hence $\Omega_{\bm{Z}\pi} \simeq
\prod_{d\mid n} \bm{Z}[\zeta_d]$.

\bigskip
Case 2.
$\pi=\langle\sigma,\tau:\sigma^n=\tau^2=1,\tau^{-1}\sigma\tau=\sigma^{-1}\rangle\simeq
D_n$.

For any monic polynomial $f(X)\mid X^n-1$, the right ideal
$f(\sigma)\cdot \bm{Z}\pi$ is a two-sided ideal in $\bm{Z}\pi$
(see Step 1 in the proof of Theorem \ref{t4.3}); write it as
$\langle f(\sigma)\rangle$. By abusing the notation, we will also
write $\langle f(\sigma)\rangle$ as the two-sided ideal in
$\bm{Q}\pi$ generated by $f(\sigma)$.

It is not difficult to verify that $\bm{Z}\pi \hookrightarrow
\prod_{d\mid n} \bm{Z}\pi/\langle \Phi_d(\sigma)\rangle$ and
$\bm{Q}\pi =\prod_{d\mid n} \bm{Q}\pi
/\langle\Phi_d(\sigma)\rangle$.

When $d=1$ or $2$, $\bm{Q}\pi/\langle\Phi_d(\sigma)\rangle \simeq
\bm{Q}C_2 \simeq \bm{Q} \times \bm{Q}$. It follows that the
maximal $\bm{Z}$-order containing $\bm{Z}\pi/\langle
\Phi_d(\sigma)\rangle$ is $\bm{Z} \times \bm{Z}$. Thus the factor
$\bm{Q}\pi/\langle\Phi_d(\sigma)\rangle$ (for $d=1$ or $2$) has no
contribution to $C(\Omega_{\bm{Z}\pi})$.

When $d\ge 3$, write
$\Lambda_d:=\bm{Z}\pi/\langle\Phi_d(\sigma)\rangle$,
$A_d:=\bm{Q}\pi/\langle\Phi_d(\sigma)\rangle$. We will show that
$A_d$ is a simple algebra whose center $K_d$ is an algebraic
number field. Let $\Gamma_d$ be a maximal $\bm{Z}$-order with
$\Lambda_d \subset \Gamma_d \subset A_d$. It follows that
$C(\Omega_{\bm{Z}\pi})\simeq \oplus_{d\mid n \atop d \ge 3}
C(\Gamma_d)$. We will show that $C(\Gamma_d) \simeq
C(\bm{Z}[\zeta_d + \zeta_d^{-1}]$.

\medskip
When $d\ge 3$, $A_d \simeq L_d \circ H$ where
$L_d=\bm{Q}(\zeta_d)$, $H=\langle\tau\rangle \simeq C_2$ and
$\tau(\zeta_d)=\zeta_d^{-1}$. Define
$K_d=L_d^{\langle\tau\rangle}=\bm{Q}(\zeta_d+\zeta_d^{-1})$. Note
that $L_d\circ H \simeq M_2(K_d)$ is a matrix ring over $K_d$.
Thus $K_d$ has no ramified infinite place for $A_d$. Let $R_d$ be
the ring of algebraic integers of $K_d$. By Theorem \ref{t6.12},
$C(\Gamma_d)\simeq C(R_d)$.

\medskip

\bigskip
Case 3. $\pi=\langle\sigma',\rho,\tau: \sigma'^n=\rho^m=\tau^2=1,\tau^{-1}\sigma'\tau=\sigma', %
\tau^{-1}\rho\tau=\rho^{-1},\sigma'\rho=\rho\sigma'\rangle\simeq
C_n\times D_m$.

Define $\sigma=\sigma'\rho$ and consider
$\Lambda_d:=\bm{Z}\pi/\langle\Phi_d(\sigma)\rangle$,
$A_d:=\bm{Q}\pi/\langle\Phi_d(\sigma)\rangle$ where $d\mid nm$ as
in Case 2. The proof is almost the same as that in Case 2. Write
$d=d_1d_2$ where $d_1\mid n$ and $d_2\mid m$.

\medskip
Case 3.1 $m$ is odd.

If $d \mid n$, then $\Lambda_d$ is isomorphic to the group ring of
$C_2=\langle\tau\rangle$ over $S_d=\bm{Z}[\zeta_d]$ (see Step 2 of
Case 3 in the proof of Theorem \ref{t1.4}). A maximal order
containing $\Lambda_d$ is $S_d\cdot(1+\tau)/2\oplus S_d\cdot
(1-\tau)/2$. Thus the contribution of $\Lambda_d$ to
$C(\Omega_{\bm{Z}\pi})$ is $C(\bm{Z}[\zeta_d])\oplus
C(\bm{Z}[\zeta_d])$.

\medskip
Now consider $\Lambda_d$ where $d \nmid n$. Then $A_d\simeq
L_d\circ H$ where we keep the notations of $H$, $L_d$ as in Case
2, but $K_d$ should be replaced by
$K_d=\bm{Q}(\zeta_{d_1},\zeta_{d_2}+\zeta_{d_2}^{-1})$.

As in Case 2, $C(\Lambda_d)\simeq C(R_d)$.

\medskip
Case 3.2 $m$ is even.

The proof is almost the same as Case 3.1. Consider separately the
three situations $d \mid n$, $d \nmid n$ (but $d_2 \neq 2$) and
$d_2=2$.

For the last situation, if $d_2=2$, then $\Lambda_d$ is isomorphic to
the group ring of $C_2=\langle\tau\rangle$ over
$S_d=\bm{Z}[\zeta_d]=\bm{Z}[\zeta_{d/2}]$. This explains the reason
why there is an extra summand $C(\bm{Z}[\zeta_{d/2}])^{(2)}$.

\bigskip
Case 4.
$\pi=\langle\sigma,\tau:\sigma^{2n}=\tau^4=1,\sigma^n=\tau^2,\tau^{-1}\sigma\tau=\sigma^{-1}\rangle\simeq
Q_{4n}$.

As in Case 2, for $d\mid 2n$, consider
$\Lambda_d:=\bm{Z}\pi/\langle\Phi_d(\sigma)\rangle$.

If $d=1$ or $2$, $\Lambda_d$ has no contribution to
$C(\Omega_{\bm{Z}\pi})$ (note that, when $d=2$ and $n$ is odd,
$\bm{Z}\pi/\langle\Phi_d(\sigma)\rangle \simeq \bm{Z}[\sqrt{-1}])$
.

\medskip
It remains to consider $\Lambda_d$ where $d\ge 3$.

Define $A_d:=\bm{Q}\pi/\langle\Phi_d(\sigma)\rangle$ and
$\Gamma_d$ a maximal $\bm{Z}$-order with $\Lambda_d \subset
\Gamma_d \subset A_d$. Write $\zeta_d$ for the image of $\sigma$
in $\Lambda_d$ and in $A_d$, and $\tau^{\prime}$ for the image of
$\tau$.

If $d\mid n$ and $d\ge 3$, the relation $\tau^2=\zeta_d^n=1$
entails the consequence $A_d$ is a matrix ring over $K_d$ as in
Case 2. Thus, as before, $C(\Gamma_d)\simeq C(R_d)$ where
$R_d=\bm{Z}[\zeta_d + \zeta_d^{-1}]$.

\medskip
It remains to consider the case $d\mid 2n$, $d\nmid n$ and $d\ge 3$.

In $A_d$, we have $\tau^{\prime \, 2}=-1$. It follows that
$\Lambda_d=(S_d\circ H)_f$ where the notation is the same as in
Step 1 of Case 4 in the proof of Theorem \ref{t1.4}. Define
$R_d=S_d^{\langle\tau'\rangle}=\bm{Z}[\zeta_d+\zeta_d^{-1}]$ and
$K_d=\bm{Q}(\zeta_d+\zeta_d^{-1})$.

Note that $A_d$ is a central division $K_d$-algebra with
$[A_d:K_d]=4$; in fact, it is a totally definite quaternion
algebra by Step 2 of Case 4 in the proof of Theorem \ref{t1.4}. By
Theorem \ref{t6.12}, $C(\Lambda)\simeq C_{A_d}(R_d)$.

\end{proof}

\begin{remark}
A thorough study of all the maximal $\bm{Z}$-orders in $\bm{Q}\pi$
(when $\pi=D_n$ or $Q_{4n}$) can be found in \cite[pages
75-79]{Sw4}.
\end{remark}

\bigskip
Let $h_m$ be the class number of $\bm{Q}(\zeta_m)$, $h_m^+$ be the class number
of $\bm{Q}(\zeta_m+\zeta_m^{-1})$.
It is known that $h_m^+$ is a divisor of $h_m$ \cite[page 40, Theorem 4.14]{Wa}.
Recall the definition of $\pi$-tori in Definition \ref{d1.1}.

%------------------------t6.6
\begin{theorem} \label{t6.6}
Let $p$ be an odd prime number, $c\ge 1$, and $K/k$ be a Galois
extension with $\fn{Gal}(K/k) = D_{p^c}$. Then all the
$D_{p^c}$-tori defined over $k$ are stably rational over $k$ if
and only if $h_{p^c}^+=1$.
\end{theorem}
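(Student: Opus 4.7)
The plan is to translate the stable rationality statement into a statement about class groups, using the machinery already developed in the paper.

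First, by Lemma \ref{l2.4}(i) combined with Lemma \ref{l2.2}, the assertion that every $D_{p^c}$-torus over $k$ is stably $k$-rational is equivalent to $[M]^{fl}=0$ for every $D_{p^c}$-lattice $M$, i.e.\ $T(D_{p^c})=0$. Since $D_{p^c}$ appears in the list of groups of Theorem \ref{t1.4} (item (ii) with $m=p^c$), we have $T(D_{p^c})\simeq C(\Omega_{\bm{Z}D_{p^c}})$. Thus the rationality hypothesis is equivalent to $C(\Omega_{\bm{Z}D_{p^c}})=0$.

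Second, I apply Theorem \ref{t6.13}(2) with $n=p^c$ to obtain
\[
C(\Omega_{\bm{Z}D_{p^c}})\simeq \bigoplus_{i=0}^{c} C(\bm{Z}[\zeta_{p^i}+\zeta_{p^i}^{-1}]).
\]
The summand for $i=0$ is $C(\bm{Z})=0$. For $1\le i\le c$, the ring $\bm{Z}[\zeta_{p^i}+\zeta_{p^i}^{-1}]$ is the full ring of integers of the totally real field $\bm{Q}(\zeta_{p^i})^+$, so its class group has order $h_{p^i}^+$. Hence $C(\Omega_{\bm{Z}D_{p^c}})=0$ is equivalent to $h_{p^i}^+=1$ for every $1\le i\le c$.

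Third, I would reduce this last condition to the single equality $h_{p^c}^+=1$. The implication ``all $h_{p^i}^+=1$'' $\Rightarrow$ ``$h_{p^c}^+=1$'' is trivial (take $i=c$). The converse relies on the divisibility $h_{p^i}^+\mid h_{p^c}^+$ for $1\le i\le c$. This follows by iterating the classical result that if $L/K$ is a cyclic extension of prime degree in which a prime of $K$ is totally ramified, then $h_K\mid h_L$: apply it to the tower $\bm{Q}(\zeta_{p})^+\subset \bm{Q}(\zeta_{p^2})^+\subset\cdots\subset \bm{Q}(\zeta_{p^c})^+$, whose successive quotients are cyclic of degree $p$ and totally ramified at the unique prime above $p$ (since $p$ is totally ramified already in $\bm{Q}(\zeta_{p^c})/\bm{Q}$, hence in the real subfield tower).

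The main obstacle is really just the clean citation of the class-number divisibility fact for real cyclotomic subfields; everything else is a direct assembly of Theorem \ref{t1.4}, Theorem \ref{t6.13}(2), and the identification of $\bm{Z}[\zeta_{p^i}+\zeta_{p^i}^{-1}]$ as the ring of integers of $\bm{Q}(\zeta_{p^i})^+$.
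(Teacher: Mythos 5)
Your proposal is correct and follows the same route as the paper: translate the stable-rationality statement into $T(D_{p^c})=0$ via Lemma \ref{l2.4}, apply Theorem \ref{t1.4} to identify $T(D_{p^c})$ with $C(\Omega_{\bm{Z}D_{p^c}})$, decompose the latter via Theorem \ref{t6.13}(2), and reduce the vanishing of all summands to the single condition $h_{p^c}^+=1$ using the totally ramified prime above $p$ in the real cyclotomic tower. The paper invokes Washington's Proposition 4.11 directly for each $c'\le c$ rather than iterating through the degree-$p$ steps, but this is cosmetic.
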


\begin{proof}
Apply Theorem \ref{t6.13}.

For any $c'\le c$, the extension $\bm{Z}[\zeta_{p^{c'}}
+\zeta_{p^{c'}}^{-1}]\hookrightarrow
\bm{Z}[\zeta_{p^c}+\zeta_{p^c}^{-1}]$ has one fully ramified prime
divisor. Thus $h_{p^c}^+=1$ implies $h_{p^{c'}}^+=1$ for any
$c'\le c$ \cite[page 39, Proposition 4.11]{Wa}.
\end{proof}

\begin{remark}
(1) The case of $D_p$-tori in the above theorem is proved by
Hoshi, Kang and Yamasaki by a different method \cite[Theorem
1.5]{HKY}.

(2) According to Washington \cite[page 420]{Wa}, the calculation
of $h_m^+$ is rather sophisticated. It is known that $h_m^+=1$ if
$m\le 66$; if the generalized Riemann hypothesis is assumed, then
$h_m^+=1$ if $m\le 161$ \cite[page 421]{Wa}. By \cite{Mi},
$h_{2^t}^+=1$ if $2^t=128, 256$; so is $2^t=512$ if the
generalized Riemann hypothesis is assumed.
\end{remark}

\bigskip

We turn to the situation of $2$-groups such as $D_n$ (the dihedral
group of order $2n$ with $n \ge 2$) and $Q_{4n}$ (the generalized
quaternion group of order $4n$ with $n \ge 2$).

The following proposition is an easy consequence of Endo and
Miyata's Theorems in \cite{EM2}, \cite{EM4} and \cite{EM6}. We
record it just to keep the reader aware.

\begin{prop} \label{p6.8}
Let $\pi=D_n$, the dihedral group of order $2n$ where $n=2^t$ and
$t \ge 1$. Then $C(\Omega_{\bm{Z}\pi}) \simeq T^g(\pi)$.
Consequently, if $h_{2^t}^+=1$ \rm{(}e.g. $1 \le t \le 8$\rm{)}
and $M$ is a $\pi$-lattice, then $M$ is both flabby and coflabby
if and only if it is stably permutation, i.e. $M \oplus P_1 \simeq
P_2$ where $P_1$ and $P_2$ are permutation lattices.
\end{prop}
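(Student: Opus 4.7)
The proof plan is to first establish the isomorphism $C(\Omega_{\bm{Z}\pi}) \simeq T^g(\pi)$, which holds regardless of class-number hypotheses, and then to use Theorem \ref{t6.13}(2) together with $h_{2^t}^+=1$ to force $C(\Omega_{\bm{Z}\pi})=0$, from which the flabby-coflabby characterization of stable permutation follows cheaply. Since $\pi=D_{2^t}$ is a $2$-group, Theorem \ref{t2.14}(2) gives $T^g(\pi)\simeq C(\bm{Z}\pi)/C^q(\bm{Z}\pi)$ for free. To upgrade this to $C(\bm{Z}\pi)/C^q(\bm{Z}\pi)\simeq C(\Omega_{\bm{Z}\pi})$ (i.e.\ Definition \ref{d2.13}), one needs $C^q(\bm{Z}\pi)=\widetilde{C}(\bm{Z}\pi)$. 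I would obtain this from \cite[Theorem 4.2]{EM2} by writing $\pi=\langle\sigma\rangle\rtimes\langle\tau\rangle=C_{2^t}\rtimes C_2$, so the complement $P=\langle\tau\rangle$ has order $2$; this is exactly the setting in which the hypothesis invoked in the proof of $(1)\Rightarrow(2)$ of Theorem \ref{t1.4} is satisfied. Combining the two isomorphisms yields $T^g(\pi)\simeq C(\Omega_{\bm{Z}\pi})$.

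For the consequence, Theorem \ref{t6.13}(2) gives
\[
C(\Omega_{\bm{Z}\pi})\simeq\bigoplus_{s=0}^{t} C(\bm{Z}[\zeta_{2^s}+\zeta_{2^s}^{-1}]).
\]
Assuming $h_{2^t}^+=1$, the same propagation argument used in Theorem \ref{t6.6} (namely that $\bm{Z}[\zeta_{2^{s}}+\zeta_{2^{s}}^{-1}]\hookrightarrow\bm{Z}[\zeta_{2^{s+1}}+\zeta_{2^{s+1}}^{-1}]$ has a totally ramified prime, so class numbers descend by \cite[page 39, Proposition 4.11]{Wa}) forces $h_{2^s}^+=1$ for every $0\le s\le t$. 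Hence every summand vanishes, $C(\Omega_{\bm{Z}\pi})=0$, and consequently $T^g(\pi)=0$.

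Now for the flabby/coflabby $\Leftrightarrow$ stably permutation equivalence: the $(\Leftarrow)$ direction is immediate, since a permutation lattice $P$ satisfies $\widehat{H}^{-1}(\pi',P)=H^1(\pi',P)=0$ for every subgroup $\pi'\subset\pi$ (by Shapiro's lemma, as $\widehat{H}^{-1}$ and $H^1$ of $\bm{Z}$ with trivial action on any finite group vanish), and both vanishings are inherited by direct summands of $P$. For $(\Rightarrow)$, suppose $M$ is flabby and coflabby. Since the $2$-Sylow subgroup of $D_{2^t}$ is itself dihedral, Theorem \ref{t2.15} forces $M$ to be invertible. Invertibility means $M\oplus M'\simeq P$ for some invertible $M'$ and permutation $P$; thus $[M]+[M']=[P]=0$ in $T(\pi)$, so $[M]\in T^g(\pi)=0$. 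Via the isomorphism $\Phi:T(\pi)\xrightarrow{\sim} F_\pi$ of Lemma \ref{l2.2}, this gives $[M]^{fl}=0$ in $F_\pi$, and since $M$ itself is flabby we may identify $[M]^{fl}$ with the similarity class of $M$; hence $M\oplus P_1\simeq P_2$ for some permutation lattices $P_1,P_2$.

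The main obstacle is purely bookkeeping: verifying that $\pi=D_{2^t}$ does satisfy the precise hypothesis of \cite[Theorem 4.2]{EM2} used to conclude $C^q(\bm{Z}\pi)=\widetilde{C}(\bm{Z}\pi)$. Once this is granted (as in the proof of Theorem \ref{t1.4}, where the same case $P=C_2$ is handled for odd-order normal subgroups, and the argument goes through for $C=C_{2^t}$ without change), the remainder of the argument is a direct assembly of Theorem \ref{t2.14}(2), Theorem \ref{t2.15}, Theorem \ref{t6.13}(2), and the $h_{2^s}^+$-propagation of Theorem \ref{t6.6}.
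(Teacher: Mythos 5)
Your plan tracks the paper's proof closely: Theorem \ref{t2.14}(2) because $\pi$ is a $2$-group, the identity $C^q(\bm{Z}\pi)=\widetilde{C}(\bm{Z}\pi)$ to pass to $C(\Omega_{\bm{Z}\pi})$, Theorem \ref{t6.13}(2) plus the class-number descent from Theorem \ref{t6.6}, and then Theorem \ref{t2.15} to reduce the flabby-and-coflabby hypothesis to invertibility. Two points deserve attention.

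First, the step you yourself flag as ``the main obstacle'' is a genuine one: the paper does \emph{not} invoke \cite[Theorem 4.2]{EM2} here, but rather \cite[page 709, Theorem 4.6]{EM2}, which is a statement tailored to dihedral groups. The reason this matters is that $D_{2^t}=C_{2^t}\rtimes C_2$ is not split metacyclic --- $\gcd\{2^t,2\}=2\neq 1$, and the Sylow $2$-subgroup is $D_{2^t}$ itself, hence non-cyclic for $t\ge 1$ --- so the situation is not the one in the proof of Theorem \ref{t1.4}, where all Sylow subgroups are cyclic and hypotheses (c), (d) of \cite[Theorem 4.2]{EM2} were checked. It is not clear that Theorem 4.2 applies in this setting; the clean way out is simply to cite Theorem 4.6 of \cite{EM2}, which handles all dihedral groups directly.

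Second, a small logical slip near the end: for a flabby and invertible lattice $M$ it is \emph{not} true that $[M]^{fl}$ equals the similarity class $[M]\in F_\pi$; rather, a flabby resolution $0\to M\to P\to E\to 0$ splits (dualize and apply Lemma \ref{l2.6}(1), using that $E^0$ is coflabby and $M^0$ is invertible), giving $P\simeq M\oplus E$, hence $[M]+[E]=0$ in $F_\pi$, i.e.\ $[M]^{fl}=[E]$ is the \emph{inverse} of $[M]$. Since you already have $[M]^{fl}=0$, you still conclude $[M]=0$ and hence that $M$ is stably permutation, so the statement survives; but the identification as written is incorrect. The paper sidesteps this subtlety entirely by arguing directly from $[M]^{fl}=0$: there is an exact sequence $0\to M\to P_1\to P_2\to 0$ of permutation lattices, and this splits by Lemma \ref{l2.6}(1) because $M$ is coflabby and $P_2$ is invertible.

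Everything else --- the computation of $C(\Omega_{\bm{Z}\pi})$ via Theorem \ref{t6.13}(2), the propagation $h_{2^t}^+=1\Rightarrow h_{2^s}^+=1$ for $s\le t$, the use of Theorem \ref{t2.15}, and the easy $(\Leftarrow)$ direction --- matches the paper's argument.
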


\begin{proof}
Since $\pi$ is a dihedral group, $C^q
(\bm{Z}\pi)=\widetilde{C}(\bm{Z}\pi)$ by \cite[page 709, Theorem
4.6]{EM2}. Thus $C(\Omega_{\bm{Z}\pi}) \simeq C(\bm{Z}\pi)/C^q
(\bm{Z}\pi)$. On the other hand, $\pi$ is a $2$-group, it follows
that $C(\bm{Z}\pi)/C^q (\bm{Z}\pi) \simeq T^g(\pi)$ by Theorem
\ref{t2.14}. Hence the result.

Suppose that $h_{2^t}^+=1$. Then $h_{2^s}^+=1$ for all $1 \le s
\le t$ by the same arguments as in the proof of Theorem
\ref{t6.6}. It follows that $C(\Omega_{\bm{Z}\pi})=0$ according to
Theorem \ref{t6.13}. Hence $T^g(\pi)=0$.

The condition $T^g(\pi)=0$ is equivalent to $[M]^{fl}=0$ for any
invertible $\pi$-lattice $M$, which means that there is a short
exact sequence $0 \to M \to P_1 \to P_2 \to 0$ for some
permutation lattices $P_1$ and $P_2$. But this sequence splits
because of Lemma \ref{l2.6}; thus $M$ is stably permutation. By
Theorem \ref{t2.15}, $M$ is stably permutation if and only if it
is flabby and coflabby.

Note that, by \cite[page 421; Mi]{Wa}, $h_{2^t}^+=1$ if $1 \le t
\le 8$.
\end{proof}

\medskip

\begin{lemma} \label{l6.16}
Let $\pi=Q_{4n}$ be the generalized quaternion group of order $4n$
where $n=2^t$ with $t \ge 1$. If $\Omega_{\bm{Z}\pi}$ is a maximal
order in $\bm{Q}\pi$ containing $\bm{Z}\pi$, then
$C(\Omega_{\bm{Z}\pi}) \simeq \oplus_{d \mid 2n} C(\bm{Z}[\zeta_d
+ \zeta_d^{-1}]$. In particular, if $\pi=Q_8, Q_{16}, Q_{32},
Q_{64}$ or $Q_{128}$, then $C(\Omega_{\bm{Z}\pi})=0$.
\end{lemma}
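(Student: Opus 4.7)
The plan is to specialize Theorem \ref{t6.13}(4) to $n=2^t$ and then isolate the one computation that is not immediate. Since every divisor of $2n=2^{t+1}$ is a power of $2$, the divisors $d\mid n$ are exactly $\{1,2,\ldots,2^t\}$, and the only divisor of $2n$ which does not divide $n$ and satisfies $d\ge 3$ is $d=2^{t+1}$ (this is where the hypothesis $t\ge 1$ enters). Theorem \ref{t6.13}(4) therefore gives
\[
C(\Omega_{\bm{Z}\pi})\simeq\bigoplus_{d\mid n}C(\bm{Z}[\zeta_d+\zeta_d^{-1}])\oplus C_{A_{2^{t+1}}}(R_{2^{t+1}}),
\]
so the first claim of the lemma reduces to the single identification
\[
C_{A_{2^{t+1}}}(R_{2^{t+1}})\simeq C(\bm{Z}[\zeta_{2^{t+1}}+\zeta_{2^{t+1}}^{-1}]).
\]

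To establish this, write $d=2^{t+1}$, $K_d=\bm{Q}(\zeta_d+\zeta_d^{-1})$ and $R_d=\bm{Z}[\zeta_d+\zeta_d^{-1}]$. By Step 2 of Case 4 in the proof of Theorem \ref{t1.4}, $A_d$ is a totally definite quaternion algebra over the totally real field $K_d$, so the set $S$ of Definition \ref{d6.11} consists of all the real places of $K_d$ and $|S|=[K_d:\bm{Q}]=\varphi(d)/2=2^{t-1}$. The exact sequence of Definition \ref{d6.11},
\[
U(R_d)\xrightarrow{\sigma}(\bm{Z}/2\bm{Z})^{2^{t-1}}\to P(R_d)/P_{A_d}(R_d)\to 0,
\]
combined with the natural surjection $C_{A_d}(R_d)\twoheadrightarrow C(R_d)$ whose kernel is $P(R_d)/P_{A_d}(R_d)$, reduces the problem to showing that the signature map $\sigma$ is surjective, i.e.\ that the narrow class group of $K_d$ coincides with its wide class group.

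For $t=1$ this is immediate, since $K_4=\bm{Q}$, $R_4=\bm{Z}$, and $\sigma(-1)$ is already the nontrivial element of $\bm{Z}/2\bm{Z}$. For $t\ge 2$ I would work with the real cyclotomic units $\eta_k:=\zeta_d^{(1-k)/2}(1-\zeta_d^k)/(1-\zeta_d)\in R_d$ for odd $k$ with $1\le k<d/2$. At the real embedding $\iota_a:\zeta_d\mapsto e^{2\pi ia/d}$ ($a$ odd, $1\le a<d/2$), one has $\iota_a(\eta_k)=\sin(ka\pi/d)/\sin(a\pi/d)$, whose sign equals $(-1)^{\lfloor ka/d\rfloor}$. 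The task is then to check that these signature vectors, together with the all-ones vector from $-1\in U(R_d)$, span $(\bm{Z}/2\bm{Z})^{2^{t-1}}$; this is a concrete $\bm{F}_2$-linear algebra computation which can be organised using the cyclic Galois action $\zeta_d\mapsto\zeta_d^5$ on $K_d$, and is the main obstacle of the first assertion.

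The second assertion then follows at once. For $\pi\in\{Q_8,Q_{16},Q_{32},Q_{64},Q_{128}\}$ every relevant divisor $d$ of $2n$ satisfies $d\le 64$, and by \cite[p.\,421]{Wa} and \cite{Mi} the real cyclotomic class numbers $h_{2^s}^+$ are equal to $1$ for all $s\le 7$. Hence every summand in $\bigoplus_{d\mid 2n}C(\bm{Z}[\zeta_d+\zeta_d^{-1}])$ vanishes and $C(\Omega_{\bm{Z}\pi})=0$.
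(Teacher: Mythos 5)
Your reduction is exactly the paper's: specialize Theorem~\ref{t6.13}(4) to $n=2^t$, observe that the only divisor $d$ of $2n$ with $d\nmid n$, $d\ge 3$ is $d=2^{t+1}$, and conclude that the lemma boils down to the single identification $C_{A_{2^{t+1}}}(R_{2^{t+1}})\simeq C(R_{2^{t+1}})$. You also correctly identify, via the exact sequence of Definition~\ref{d6.11}, that what must be shown is surjectivity of the signature map $U(R_d)\to D\simeq(\bm{Z}/2\bm{Z})^{2^{t-1}}$ (equivalently, that the narrow and wide class groups of $\bm{Q}(\zeta_{2^{t+1}}+\zeta_{2^{t+1}}^{-1})$ coincide).

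However, that is precisely where your proof stops. You write out the circular units $\eta_k$ and their signs $(-1)^{\lfloor ka/d\rfloor}$ at the real embeddings, and then declare the verification that these signature vectors span $(\bm{Z}/2\bm{Z})^{2^{t-1}}$ to be ``a concrete $\bm{F}_2$-linear algebra computation \dots\ the main obstacle of the first assertion'' --- without carrying it out. An ``obstacle'' left unresolved is a gap: without it, the isomorphism $C_{A_d}(R_d)\simeq C(R_d)$ is not established, and the first assertion of the lemma is not proved. The paper closes this gap at a stroke by citing Weber's theorem (Hasse, \textit{\"Uber die Klassenzahl abelscher Zahlk\"orper}, Satz~6, p.~29; see also \cite[p.~272]{CR2}), which says exactly that the units of $\bm{Z}[\zeta_{2^{t+1}}+\zeta_{2^{t+1}}^{-1}]$ realize every sign pattern. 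Your circular-unit sketch is, in spirit, the classical proof of that theorem, so you should either invoke the reference (which is cleanest) or actually complete the $\bm{F}_2$-rank computation --- for instance by showing that the signature matrix of the $\eta_k$, suitably ordered by the Galois action $\zeta_d\mapsto\zeta_d^5$, is unitriangular. As written, it is not a proof.

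The second assertion is fine: once the first isomorphism is in hand, the vanishing for $Q_8,\ldots,Q_{128}$ follows from $h_{2^s}^+=1$ for $s\le 6$ (which is all you need, since $2n=2^{t+1}\le 64$; your bound $s\le 7$ is harmless but slightly stronger than required).
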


\begin{proof}
Apply Theorem \ref{t6.13}. It suffices to determine $C_A(R)$ where
$R=\bm{Z}[\zeta_{2n}+\zeta_{2n}^{-1}]$,
$K=\bm{Q}(\zeta_{2n}+\zeta_{2n}^{-1})$ is the quotient field of
$R$, and $A$ is the central simple $K$-algebra defined by $A=L +
Lu$ with $u^2=-1$, $u \alpha = \tau(\alpha) u$ for any $\alpha \in
L$ ($\tau$ acts on $L=\bm{Q}(\zeta_{2n})$ by
$\tau(\zeta_{2n})=\zeta_{2n}^{-1}$).

Use the exact sequence $U(R) \to D \to P(R)/P_A(R) \to 0$ in
Definition \ref{d6.11}. By Weber's Theorem \cite[Satz 6, page 29;
CR2, page 272]{Ha} the map $U(R) \to D$ is surjective. Thus
$P_A(R)=P(R)$ and $C_A(R) \simeq C(R) \simeq
C(\bm{Z}[\zeta_{2n}+\zeta_{2n}^{-1}])$.

By \cite[page 421]{Wa}, $h_{2^s}^{+}=1$ if $1 \le s \le 6$. Hence
$C(\Omega_{\bm{Z}\pi})=0$ if $\pi=Q_8, Q_{16}, Q_{32}, Q_{64}$ or
$Q_{128}$.
\end{proof}

\bigskip
\begin{prop} \label{p6.9}
If $\pi \simeq Q_8, Q_{16}, Q_{32}, Q_{64}$ or $Q_{128}$, then
$C(\Omega_{\bm{Z}\pi})=0 = T^g(\pi)$. It follows that an
invertible $\pi$-lattice is always stably permutation.
\end{prop}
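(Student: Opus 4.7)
The strategy is to mirror the end of the proof of Proposition \ref{p6.8}, the only new wrinkle being that the quaternion $2$-groups do not fall under the dihedral case of \cite[Theorem 4.6]{EM2}. Lemma \ref{l6.16} gives $C(\Omega_{\bm{Z}\pi})=0$ for each of the listed groups. From Definition \ref{d2.12} (via Oliver's theorem $\widetilde{C}^q(\bm{Z}\pi)=\widetilde{C}(\bm{Z}\pi)$), there is a surjection $C(\Omega_{\bm{Z}\pi}) \twoheadrightarrow C(\bm{Z}\pi)/C^q(\bm{Z}\pi)$ valid for any finite group $\pi$, so $C(\bm{Z}\pi)/C^q(\bm{Z}\pi)=0$ as well. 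Since $\pi$ is a $2$-group, Theorem \ref{t2.14}(2) then yields $T^g(\pi) \simeq C(\bm{Z}\pi)/C^q(\bm{Z}\pi) = 0$, which is the first assertion.

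For the final assertion I would let $M$ be an invertible $\pi$-lattice. Invertibility forces $[M]$ to lie in $T^g(\pi)=0$, so by Lemma \ref{l2.2}(1) we have $[M]^{fl}=0$ in $F_\pi$. Fix a flabby resolution $0 \to M \to P \to E \to 0$ with $P$ permutation; since $[E]=0$ there exist permutation lattices $P',P''$ with $E\oplus P' \simeq P''$. Forming the direct sum with the identity sequence $0\to 0 \to P' \to P' \to 0$ gives an exact sequence $0 \to M \to P\oplus P' \to P'' \to 0$ in which $P''$ is permutation (hence invertible) and $M$ is invertible (hence coflabby). By Lemma \ref{l2.6}(1) this sequence splits, yielding $M \oplus P'' \simeq P \oplus P'$, so that $M$ is stably permutation.

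I expect no real obstacle; every ingredient is already in the paper. The one point worth flagging is that we do \emph{not} need the finer isomorphism $C(\Omega_{\bm{Z}\pi}) \simeq C(\bm{Z}\pi)/C^q(\bm{Z}\pi)$ (which Proposition \ref{p6.8} inherited from the dihedral case of \cite[Theorem 4.6]{EM2} and which is not known in general for quaternion $2$-groups); the universal surjection suffices to annihilate the target. We likewise bypass Theorem \ref{t2.15}, and the argument applies uniformly to all five quaternion groups on the list, the cutoff $Q_{128}$ being inherited from the bound $h_{2^s}^{+}=1$ for $s\le 6$ used in Lemma \ref{l6.16}.
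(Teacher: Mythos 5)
Your proposal is correct and mirrors the paper's proof almost step for step: the paper also obtains $C(\bm{Z}\pi)/C^q(\bm{Z}\pi)=0$ from the universal surjection out of $C(\Omega_{\bm{Z}\pi})$ (Definition \ref{d2.12}) plus Lemma \ref{l6.16}, then invokes Theorem \ref{t2.14}(2) for the $2$-group case to kill $T^g(\pi)$, and for the last assertion simply says ``the remaining proof is similar to that of Proposition \ref{p6.8}''. The only thing you add is an explicit write-out of that remaining step (producing the exact sequence $0 \to M \to P\oplus P' \to P'' \to 0$ and splitting it with Lemma \ref{l2.6}(1)), which is exactly how the splitting is effected in the proof of Proposition \ref{p6.8}; your flag that the surjection alone suffices and the finer isomorphism $C(\Omega_{\bm{Z}\pi})\simeq C(\bm{Z}\pi)/C^q(\bm{Z}\pi)$ is not needed is also precisely the paper's reasoning.
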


\begin{proof}
Since there is a surjection $C(\Omega_{\bm{Z}\pi})\to
C(\bm{Z}\pi)/C^q(\bm{Z}\pi)$ (see Definition \ref{d2.12}), it
follows that $C(\bm{Z}\pi)/C^q(\bm{Z}\pi)=0$ by Lemma \ref{l6.16}.

Because of Theorem \ref{t2.14}, $T^g(\pi)\simeq
C(\bm{Z}\pi)/C^q(\bm{Z}\pi)$. Thus $T^g(\pi)=0$. The remaining
proof is similar to that of Proposition \ref{p6.8}.
\end{proof}

\bigskip
The same argument of the above proposition may be applied to the
semi-dihedral groups and the modular groups also. Let $n=2^t$
where $t \ge 3$, define $SD_{2n}=\langle\sigma,\tau:
\sigma^n=\tau^2=1,\tau^{-1}\sigma\tau=\sigma^{-1+(n/2)}\rangle$
(the semi-dihedral group of order $2n$), and define
$M_{2n}=\langle\sigma,\tau:
\sigma^n=\tau^2=1,\tau^{-1}\sigma\tau=\sigma^{1+(n/2)}\rangle$
(the modular group of order $2n$). If $\pi$ is a $2$-group of
order $\ge 16$ and contains a cyclic normal subgroup of index $2$,
then $\pi$ is isomorphic to the dihedral group, the semi-dihedral
group, the generalized quaternion group or the modular group \cite[page 107]{Su}.

The proof of the following proposition is similar to that of
Theorem \ref{t6.13}, and is omitted.

\begin{prop} \label{p6.14}
Let $n=2^t$ where $t \ge 3$.

(1) If $\pi = SD_{2n}$, then $C(\Omega_{\bm{Z}\pi})\simeq
(\oplus_{0\le s\le t-1} C(\bm{Z}[\zeta_{2^s}+\zeta_{2^s}^{-1}]))
\oplus C(\bm{Z}[\zeta_n-\zeta_n^{-1}])$.

(2) If $\pi = M_{2n}$, then $C(\Omega_{\bm{Z}\pi})\simeq
(\oplus_{0\le s\le t-1} C(\bm{Z}[\zeta_{2^s}])^{(2)}) \oplus
C(\bm{Z}[\zeta_{n/2}])$.
\end{prop}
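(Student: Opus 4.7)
The plan is to imitate Case 2 (the dihedral case) of the proof of Theorem \ref{t6.13}: decompose $\bm{Q}\pi$ using the central projectors associated to the cyclotomic polynomials in $\sigma$, then identify a maximal order in each factor and apply Theorem \ref{t6.12}. Concretely, for each $d\mid n$ set $\Lambda_d:=\bm{Z}\pi/\langle\Phi_d(\sigma)\rangle$ and $A_d:=\bm{Q}\pi/\langle\Phi_d(\sigma)\rangle$. Since $\tau$ normalises $\langle\sigma\rangle$ in both $SD_{2n}$ and $M_{2n}$, the ideal $\langle\Phi_d(\sigma)\rangle$ is two-sided (cf.\ Step 1 in the proof of Theorem \ref{t4.3}), so $\bm{Z}\pi\hookrightarrow\prod_{d\mid n}\Lambda_d\hookrightarrow\prod_{d\mid n}A_d=\bm{Q}\pi$. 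Choosing a maximal $\bm{Z}$-order $\Gamma_d$ in $A_d$ yields $C(\Omega_{\bm{Z}\pi})\simeq\bigoplus_{d\mid n}C(\Gamma_d)$, and the cases $d=1$ and $d=2$ contribute commutative $\bm{Q}\times\bm{Q}$-type factors with trivial class groups (using that $n/2$ is even, so $\tau$ acts trivially on the image of $\sigma$ in $\Lambda_1,\Lambda_2$), so it suffices to analyse $d\ge 3$.

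The heart of the computation is to read off how $\tau$ acts on $\zeta_d$, the image of $\sigma$ in $\Lambda_d$. Writing $\tau\zeta_d\tau^{-1}=\zeta_d^{\epsilon(d)}$, the defining relations give $\epsilon(d)\equiv -1+n/2\pmod d$ for $SD_{2n}$ and $\epsilon(d)\equiv 1+n/2\pmod d$ for $M_{2n}$. When $d\mid n/2$ these reduce to $-1$ and $+1$ respectively. Thus for $d\mid n/2$ with $d\ge 3$: in the semi-dihedral case the situation is identical to the dihedral case, so $A_d\simeq M_2(\bm{Q}(\zeta_d+\zeta_d^{-1}))$ and the contribution is $C(\bm{Z}[\zeta_d+\zeta_d^{-1}])$; in the modular case $\tau$ acts trivially, so $A_d\simeq\bm{Q}(\zeta_d)[\langle\tau\rangle]\simeq\bm{Q}(\zeta_d)\times\bm{Q}(\zeta_d)$ and the contribution is $C(\bm{Z}[\zeta_d])^{(2)}$.

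The only remaining case is $d=n$. Here $\tau^2=1$ and $\tau$ acts on $L:=\bm{Q}(\zeta_n)$ as an order-two automorphism: $\tau\zeta_n=-\zeta_n^{-1}$ for $SD_{2n}$ and $\tau\zeta_n=-\zeta_n$ for $M_{2n}$. Consequently $A_n\simeq L\circ\langle\tau\rangle$ is a crossed product with trivial $2$-cocycle, and hence $A_n\simeq M_2(K)$ where $K:=L^{\langle\tau\rangle}$. Standard Galois theory identifies the fixed field: in the semi-dihedral case $\zeta_n-\zeta_n^{-1}$ is $\tau$-fixed and $\zeta_n$ satisfies $X^2-(\zeta_n-\zeta_n^{-1})X-1=0$ over $K$, forcing $K=\bm{Q}(\zeta_n-\zeta_n^{-1})$; in the modular case $\tau(\zeta_n^2)=\zeta_n^2$, so $K=\bm{Q}(\zeta_{n/2})$. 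Applying Theorem \ref{t6.12} to a maximal order in the split algebra $M_2(K)$ (no place ramifies, so no ray class correction arises) yields $C(\Gamma_n)\simeq C(\bm{Z}[\zeta_n-\zeta_n^{-1}])$ and $C(\bm{Z}[\zeta_{n/2}])$ respectively, where the ring-of-integers identifications are the standard ones for these cyclotomic subfields.

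The main obstacle I anticipate is the correct identification of the fixed field $K$ (and its ring of integers) in the semi-dihedral top case $d=n$, together with the verification that the crossed product $L\circ\langle\tau\rangle$ really is $M_2(K)$ rather than a quaternion division algebra; the latter follows immediately from the triviality of the cocycle (since $\tau^2=1$ already in $\pi$), but the former requires care. Once these steps are in place, assembling the contributions for all $d\mid n$ and relabelling $d=2^s$ gives the two displayed formulae, where the range $0\le s\le t-1$ harmlessly absorbs the trivial summands at $s=0,1$ (those class groups vanish) matching the $d=1,2$ cases handled at the outset.
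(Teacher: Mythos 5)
Your proposal is correct and follows exactly the route the paper intends: the paper states that the proof of Proposition \ref{p6.14} is ``similar to that of Theorem \ref{t6.13}, and is omitted,'' and your argument carries out that similarity in full, decomposing $\bm{Q}\pi$ via the two-sided ideals $\langle\Phi_d(\sigma)\rangle$ (valid here since $-1+n/2$ and $1+n/2$ are coprime to $n$), reading off the action of $\tau$ on $\zeta_d$ for $d\mid n/2$ versus $d=n$, identifying the split algebras $M_2(K_d)$ (trivial cocycle, no infinite ramification), and applying Theorem \ref{t6.12}.
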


\bigskip
\begin{prop} \label{p6.15}
If $\pi \simeq M_{16}, M_{32}$ or $M_{64}$, then
$C(\Omega_{\bm{Z}\pi})=0 = T^g(\pi)$. It follows that an
invertible $\pi$-lattice is always stably permutation.
\end{prop}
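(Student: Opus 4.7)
The plan is to run the argument of Proposition \ref{p6.9} essentially verbatim, using Proposition \ref{p6.14}~(2) in place of Lemma \ref{l6.16}. First I would invoke Proposition \ref{p6.14}~(2) to obtain, for $\pi = M_{2n}$ with $n=2^t$,
\[
C(\Omega_{\bm{Z}\pi})\simeq \Bigl(\bigoplus_{0\le s\le t-1} C(\bm{Z}[\zeta_{2^s}])^{(2)}\Bigr) \oplus C(\bm{Z}[\zeta_{n/2}]).
\]
For $\pi\simeq M_{16}, M_{32}, M_{64}$ one has $t=3, 4, 5$ respectively, so every cyclotomic field appearing is $\bm{Q}(\zeta_{2^s})$ with $s\le 4$; the class numbers $h_{2^s}$ are all equal to $1$ in this range (classical; compare the references in \cite[page 421]{Wa} and \cite{Mi}), so every summand vanishes and $C(\Omega_{\bm{Z}\pi})=0$.

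Next, the surjection $C(\Omega_{\bm{Z}\pi})\to C(\bm{Z}\pi)/C^q(\bm{Z}\pi)$ recorded at the end of Definition \ref{d2.12} forces $C(\bm{Z}\pi)/C^q(\bm{Z}\pi)=0$. Because $\pi$ is a $2$-group, Theorem \ref{t2.14}~(2) gives the isomorphism $T^g(\pi)\simeq C(\bm{Z}\pi)/C^q(\bm{Z}\pi)$, so $T^g(\pi)=0$.

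For the last clause, any invertible $\pi$-lattice $M$ is a direct summand of some permutation lattice $P$, say $M\oplus M'\simeq P$, so $[M]+[M']=[P]=0$ in $T(\pi)$, and hence $[M]\in T^g(\pi)=0$. By Lemma \ref{l2.2} this forces $[M]^{fl}=0$ in $F_\pi$. Choosing a flabby resolution $0\to M\to P_1\to P_2\to 0$ with both right-hand terms permutation (possible after adding permutation summands, since $[M]^{fl}=0$), dualizing to obtain $0\to P_2^0\to P_1^0\to M^0\to 0$, applying Lemma \ref{l2.6}~(1) (as $M^0$ is invertible and $P_2^0$ is permutation hence coflabby) to split the dual sequence, and dualizing back, we conclude $M\oplus P_2\simeq P_1$. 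No step is genuinely new; the only point to verify is the bound $t-1\le 4$, which holds precisely for $\pi\in\{M_{16}, M_{32}, M_{64}\}$. Pushing beyond these cases would require knowing $h_{2^s}=1$ for larger $s$, which is the real bottleneck -- and indeed fails for $s$ large enough, so the statement cannot be extended indefinitely by this method.
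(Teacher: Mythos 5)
Your proof is correct and follows the same route as the paper: apply Proposition \ref{p6.14}~(2), observe that the relevant cyclotomic class numbers are trivial, pass through the surjection $C(\Omega_{\bm{Z}\pi})\to C(\bm{Z}\pi)/C^q(\bm{Z}\pi)$, use Theorem \ref{t2.14}~(2) because $\pi$ is a $2$-group, and then run the same splitting argument from Proposition \ref{p6.8}/\ref{p6.9}. The paper's proof is terser (it just cites \cite[p.~205, Theorem 11.1]{Wa} and leaves the rest implicit), but the content is identical. Two small remarks. First, for the last clause you could skip the dualization: the sequence $0\to M\to P_1\to P_2\to 0$ already splits directly by Lemma \ref{l2.6}~(1), since $M$ is invertible, hence a direct summand of a permutation lattice, hence coflabby, and $P_2$ is invertible; this is what the paper does in Proposition \ref{p6.8}. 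Second, your closing remark that the bound $t-1\le 4$ ``holds precisely'' for these three groups slightly overstates the obstruction: since $h_{32}=1$ as well, the same computation would also give $C(\Omega_{\bm{Z}\pi})=0$ for $M_{128}$ (i.e.\ $t=6$), and the method first genuinely fails at $M_{256}$ because $h_{64}\ne 1$. This does not affect the proof of the stated proposition.
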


\begin{proof}
By Proposition \ref{p6.14}, $C(\Omega_{\bm{Z}\pi})=0$ because of
\cite[page 205, Theorem 11.1]{Wa}. Thus $T^g(\pi)=0$.
\end{proof}

\begin{remark}
The situation of the group $SD_{2n}$ is left open because we don't
know the class number of $\bm{Q}(\zeta_n-\zeta_n^{-1})$ when $n
\ge 16$.

\end{remark}

\newpage
%----------------------------------------References

\end{document}